%% file: main.tex
\documentclass[a4paper, 12pt]{amsart}
\input{preamble}

\begin{document}

\begin{abstract}
    We introduce the class $LW_+$ of $L$-tauberian operators and its dual counterpart, the class $MW_-$ of $M$-cotauberian operators. We obtain sequential, perturbative, and algebraic characterizations of these classes. As a consequence, every $T\in LW_+(E,Y)$ is disjointly non-singular, and the converse implication holds if and only if every dispersed subspace of $E$ has equi-integrable unit ball. For $E=L_p$, the latter property depends on $p$. For $1\leq p<2$, equi-integrability of the unit ball is equivalent to dispersedness, and $L$-tauberian operators coincide with disjointly non-singular operators, while for $p\geq 2$ these equivalences fail. Moreover, $LW_+(L_p)$ is neither open nor stable under composition. Applying a result of Haydon, Odell, and Schlumprecht \cite{Haydon11}, we show that for $2<p<\infty$ every infinite dimensional dispersed subspace of $L_p$ contains an infinite dimensional subspace with equi-integrable unit ball. However, we give an example showing that this is not true for $p=2$.
\end{abstract}

\pagestyle{headings}
\markboth{\shortauthors}{\shorttitle}

\maketitle


\section{Introduction}

Disjointness is a device by which the order structure of a Banach lattice influences its linear geometry. The disjointly strictly singular operators are those operators which fail to be isomorphisms on subspaces generated by disjoint sequences. Contrarily, the disjointly non-singular operators ($\DNS$ operators, for short) are those operators which are strictly singular on no such subspace. They were introduced in \cite{Manuel21}, and have since been studied from quantitative, topological, perturbative, and dual viewpoints in \cite{Bilokopytov22,Bilokopytov25,Caldera26, ManuelQuantitative21,Manuel24}. 

\medskip

A closed subspace $M$ of a Banach lattice $E$ is called \emph{dispersed} if there is no normalized disjoint sequence $(x_n)\subset E$ such that $\textrm{dist}(x_n,M)\to 0$.
Dispersed subspaces have attracted interest through their connection with phase retrieval, a reconstruction problem arising in X-ray crystallography, astronomy, quantum mechanics, and speech recovery; see \cite{Freeman24} and the references therein. The kernel of every $\DNS$ operator is dispersed, and the quotient map associated with every dispersed subspace is $\DNS$. Thus, $\DNS$ operators provide an operator-theoretic counterpart of dispersed subspaces.

\medskip

There is another way of measuring how far a bounded set is from exhibiting disjoint behavior. A bounded subset of a Banach lattice $E$ is equi-integrable in $E$ when every disjoint sequence in its solid hull converges to zero in norm (cf. \cite[Definition 3.3.1]{Meyer-Nieberg:91}). For a closed subspace of a Banach lattice, having equi-integrable unit ball is stronger than being dispersed. 
For unit balls of subspaces of \(L_p\), this distinction disappears for $1\leq p<2$, but becomes significant for $2\leq p<\infty$. Understanding this change of behavior is one of the main purposes of the paper.

\medskip

We consider a class of operators  that arises by replacing relative weak compactness by equi-integrability in the definition of tauberian operators; see \cite{TauberianOperators,GonzalezOnieva90}. Let $E,F$ be Banach lattices and let $X,Y$ be Banach spaces. An operator $T:E\to Y$ is $L$-tauberian when every bounded set $C\subset E$ for which $T(C)$ is relatively compact is equi-integrable. Dually, an operator $S:X\to F$ is $M$-cotauberian when the adjoint $S^*:F^*\to X^*$ is $L$-tauberian. The algebraic characterizations we obtain explain the terminology: $T$ is $L$-tauberian precisely when $TA$ compact forces $A$ to be $L$-weakly compact, while $S$ is $M$-cotauberian precisely when $BS$ compact forces $B$ to be $M$-weakly compact. Thus, relative to the class $LW$, $L$-tauberian operators play the role that tauberian operators play relative to weakly compact operators. 
Also, when $E$ is an $L$-space the $L$-tauberian operators are exactly the tauberian operators, 
and the name $M$-cotauberian is dictated by duality. 

\medskip

The paper is organized as follows. Section~2 fixes our notation and recalls some facts about dispersed subspaces and $\DNS$ operators that will be used throughout. We record the sequential and perturbative characterizations of $\DNS$ operators and the relevant structure of dispersed subspaces of $L_p$. We collect some elementary properties of equi-integrable sets, introduce a measure of non-equi-integrability $\mathcal{NI}$, and obtain useful characterizations in function spaces. We also prove that every bounded equi-integrable subset of a Banach lattice is a Banach--Saks set (Theorem~\ref{thm: equi_Banach_Saks}), improving a result of \cite{Lesnik}.

\medskip

Section~3 introduces and studies $L$-tauberian and $M$-cotauberian operators. 
After establishing some sequential characterizations, we prove perturbative and algebraic characterizations parallel to those available for $\DNS$ and coDNS (with $\DNS$ adjoint) operators, and deduce that every $L$-tauberian operator is $\DNS$ and tauberian. We also characterize the closed-range case, examine $L$-tauberian projections and second adjoints, and isolate the order-continuity assumptions under which the classes are non-trivial.

\medskip

Section~4 considers the  case $E=L_p$, showing a different  behavior in the cases $1\leq p<2$, $p=2$, and $2<p<\infty$. 
The results obtained are summarized in the following result. 


\begin{theorem}\label{thm:binary}
    Given $1 \leq p < \infty$, the following statements are equivalent:
    \begin{enumerate}[(i)]
        \item $1 \leq p < 2$.
        \item A closed subspace $M$ of $L_p$ is dispersed if and only if $B_M$ is equi-integrable.
        \item An operator $T:L_p \to Y$ is $\DNS$ if and only if $T$ is $L$-tauberian.
        \item $LW_+(L_p, Y)$ is an open set of $\mathcal{B}(L_p, Y)$ for every Banach space $Y$.
        \item An operator $T: L_p \to L_p$ is $L$-tauberian if and only if for every bounded subset $C \on L_p$ with $T(C)$ equi-integrable, $C$ is equi-integrable.
        \item If $T,S \in LW_+(L_p)$, then $TS \in LW_+(L_p)$.
    \end{enumerate}
    Moreover,
    \begin{enumerate}[(i)]
        \item If $2 < p < \infty$, every infinite dimensional dispersed subspace $M$ of $L_p$ contains an infinite dimensional subspace $N$ such that $B_N$ is equi-integrable.
        \item If $p = 2$, there exists an infinite dimensional dispersed subspace $M$ of $L_2$ that contains no infinite dimensional subspace $N$ with $B_N$ equi-integrable.
    \end{enumerate}
\end{theorem}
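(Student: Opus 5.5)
The plan is to prove (i)$\Rightarrow$ each of (ii)--(vi), and then show that each of (ii)--(vi) fails for $p\ge 2$ using a single family of counterexamples. This gives all the equivalences at once. The two ``Moreover'' items are handled separately afterwards.

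For (i)$\Rightarrow$(ii), one direction is general: equi-integrable unit ball always implies dispersed, as recorded in Section~2. For the converse with $1\le p<2$, I would use the Kadec--Pełczyński dichotomy from Section~2. Let $M$ be dispersed and suppose $B_M$ is not equi-integrable. Then there is a normalized sequence in $M$ that is not $p$-equi-integrable. By Kadec--Pełczyński, a subsequence is a small perturbation of a disjoint sequence, which contradicts dispersedness. (This step is valid for every $p$.) If instead $B_M$ is $p$-equi-integrable, then $M$ is isomorphic to a subspace of $L_r$ on which the $L_p$ and $L_r$ norms are equivalent. For $p<2$ this is equivalent to $B_M$ being equi-integrable, so in fact no further argument is needed here. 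The real content is then (ii)$\Leftrightarrow$(iii) together with the failure at $p\ge2$.

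For (ii)$\Rightarrow$(iii), I would use the sequential characterizations from Section~3. $T$ is $\DNS$ if and only if its kernel is dispersed and it has closed range on disjoint parts, while $L$-tauberian means equi-integrability of sets with relatively compact image. The paper's general result that $\DNS$ coincides with $LW_+$ exactly when every dispersed subspace has equi-integrable ball gives (ii)$\Leftrightarrow$(iii) directly. For (iii)$\Rightarrow$(iv), use that the $\DNS$ operators form an open set. For (iii)$\Rightarrow$(vi), use that $\DNS$ operators are stable under composition. For (v), when $p<2$ I would show via the perturbative characterization that a $\DNS$ operator reflects equi-integrability. Suppose $C$ is not equi-integrable. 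Then some disjoint normalized sequence lies near the solid hull of $C$, and by Kadec--Pełczyński it is near $C$ itself. Since $T$ is bounded below on its span up to compact perturbation, $T(C)$ contains an almost disjoint non-null sequence, so $T(C)$ is not equi-integrable.

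For $p\ge 2$, I would build the counterexamples from a Rademacher/Gaussian-type subspace $R\subset L_p$. Such a subspace is dispersed but has no equi-integrable unit ball; for $p\ge 2$ one needs a subspace like $\ell_2$ spanned by i.i.d.\ variables whose normalized elements can concentrate. The natural candidate is $M=\overline{\operatorname{span}}\{g_n\}$ with the $g_n$ independent, together with the quotient map $Q:L_p\to L_p/M$. This $Q$ is $\DNS$ but not $L$-tauberian, so (ii) and (iii) fail. For the failure of (iv), perturb $Q$ by small operators into $LW_+$. For (v) and (vi), compose $Q$ with an isomorphic embedding of $L_p/M$ back into $L_p$ and factor it as a product of two $L$-tauberian operators. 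I expect producing the explicit non-equi-integrable dispersed subspace for $p=2$ (and $p>2$) to be the main obstacle. My first attempt would use $\operatorname{span}$ of $f_n=\sqrt{n}\,\chi_{A_n}$-type blocks mixed with Rademachers so that no subsequence is close to disjoint, but norm mass escapes to small sets.

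For the ``Moreover'' items: for $2<p<\infty$, take an infinite dimensional dispersed $M$ and use the Haydon--Odell--Schlumprecht result. Every subspace of $L_p$ with $p>2$ contains either an almost disjoint sequence, which is excluded by dispersedness, or a subspace on which the $L_p$ and $L_2$ norms are equivalent. The latter subspace has a $p$-equi-integrable, hence equi-integrable, unit ball. For $p=2$, the construction above must be refined so that every infinite dimensional subspace still has unit-ball elements of small support. I would take a $(\sum E_k)_{\ell_2}$-type block construction in which each $E_k$ is a finite-dimensional dispersed subspace with a uniformly controlled dispersion constant, but containing unit vectors supported on sets of measure $\to0$ in every subspace of codimension bounded by $k$. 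A gliding hump argument would then defeat every infinite dimensional $N$.
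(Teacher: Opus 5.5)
Your overall architecture matches the paper: prove (ii)--(vi) for $p<2$, then refute each for $p\ge2$. The deductions (ii)$\Rightarrow$(iii)$\Rightarrow$(iv), via the perturbative characterizations and openness of $\DNS$, are fine. The $p<2$ arguments, however, conflate dispersedness (strong embeddedness, which is what Kadec--Pe\l{}czy\'nski detects) with equi-integrability of the unit ball, and that distinction is exactly what the theorem is about.

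Your step ``a non-equi-integrable sequence in $M$ has, by Kadec--Pe\l{}czy\'nski, a subsequence that is a small perturbation of a disjoint sequence (valid for every $p$)'' is false. Kadec--Pe\l{}czy\'nski only splits such a sequence as $g_n+h_n$ with $(h_n)$ disjoint and $(g_n)$ equi-integrable but not necessarily small. For $p\ge2$, $[x_n+r_n]$ (Proposition~\ref{prop: count_disp_nonequi}) is dispersed with non-equi-integrable ball, so your argument would prove (ii) for every $p$.

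Your (v) argument fails for the same reason and never uses $p<2$. Take a unitary $U$ on $L_2$ with $Ux_n=r_n$, where $(x_n)$ is normalized and disjoint. It is $L$-tauberian (Proposition~\ref{prop: non_trivial_L_taub}), yet it maps the non-equi-integrable set $\{x_n\}$ onto the equi-integrable set $\{r_n\}$. An operator bounded below on a subspace need not send almost disjoint sequences to almost disjoint ones. The same $U$ shows $\DNS$ is not stable under composition in general: $I-P_{[r_n]}$ is $L$-tauberian, hence $\DNS$, but $(I-P_{[r_n]})U$ vanishes on $[x_n]$. So (vi) needs its own argument. It follows from (v): if $TS(C)$ is relatively compact, then $S(C)$, and hence $C$, is equi-integrable.

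The missing $p<2$ ingredient is Theorem~\ref{prop: equi_p}: a bounded absolutely convex subset of $L_p$, $1\le p<2$, is equi-integrable iff it contains no $\ell_p$-sequence.
\begin{itemize}
\item For (ii), this reduces the claim to dispersed subspaces containing no $\ell_p$-sequence. This is where the paper uses Proposition~\ref{prop: dispersed_strongly_embedded} and \cite[Proposition~3.4]{Manuel21}.
\item For (v): if $\acon(C)$ contains an $\ell_p$-sequence $(f_n)$, then $T|_{[f_n]}$ is upper semi-Fredholm \cite[Proposition~3.8]{Manuel21}. Hence a tail of $(Tf_n)$ is an $\ell_p$-sequence in $\acon(T(C))$, so $T(C)$ is not equi-integrable.
\end{itemize}

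The $p\ge2$ half and the ``Moreover'' items are not carried out, and the proposed ingredients do not work.
\begin{itemize}
\item The Rademacher or Gaussian span has equi-integrable ball (Example~\ref{ex: Rademacher}). The right subspace is $M=[x_n+r_n]$. It is dispersed because, by Khintchine, the Rademacher component carries a fixed proportion of the norm. Then $Q_M$ is $\DNS$ but not $L$-tauberian.
\item For (iv) your perturbation runs the wrong way. Approximating the non-$L$-tauberian $Q$ by members of $LW_+$ only shows $LW_+$ is not closed. You need an $L$-tauberian operator that is a limit of non-$L$-tauberian ones. For example, take $T=I-P$ with $N(T)=[r_n]$ and $T_n=I-P_n$ with $N(T_n)=[r_k+\frac{1}{n}x_k]$ and a common complement (Proposition~\ref{prop: sub_ex_open}, Lemmas~\ref{lemma: proj_gap} and~\ref{lemma: conv_rang_kern}).
\item For (vi) you give no factorization. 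The paper takes $T=I-P$ and the isomorphism $S=I-AP$ with $Ar_n=x_n$ and $PA=0$, so $TS(x_n+r_n)=0$ (Proposition~\ref{prop: L-taub_L_2}). Then (v) fails too, since (v)$\Rightarrow$(vi).
\item For $2<p<\infty$, equivalence of the $L_p$ and $L_2$ norms does \emph{not} give an equi-integrable ball in $L_p$. It holds on $M_\lambda$, $\lambda>0$, by Proposition~\ref{prop: sub_ex_open}(ii), yet $B_{M_\lambda}$ is not equi-integrable. Kadec--Pe\l{}czy\'nski only gives $M\simeq\ell_2$. The real input is \cite[Lemma~5.4]{Haydon11}, which produces a further subspace with equi-integrable ball.
\item For $p=2$, the property you ask of the blocks (unit vectors supported on sets of measure $\to0$, with a uniform dispersion constant) is impossible. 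Dispersedness gives $\Vert u\Vert_1\ge c\Vert u\Vert_2$ on $M$, hence $\mu(\{u\neq0\})\ge c^2$.
\item What is needed instead is that every block $\sum_{k\in I}a_kf_k$ carries a fixed fraction $1-\epsilon$ of its norm on a set of measure at most $2^{-\min I}$. This holds for $f_n(s,t)=r_n(s)g_n(t)$ in Theorem~\ref{thm: subspace_L_2}. It must be combined with the orthogonal-projection and Bessaga--Pe\l{}czy\'nski argument of Claim~4 there.
\end{itemize}
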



\subsection*{Notation.}
All Banach spaces and Banach lattices are over the real scalar field. Throughout the paper, $X,Y,Z$ denote Banach spaces and $E,F$ denote Banach lattices. We write $E_+=\{x\in E:x\geq0\}$, and $B_X$ and $S_X$ for the closed unit ball and the unit sphere of $X$, respectively. If $(x_n)$ is a sequence in $X$, then $[x_n]$ denotes its closed linear span. 

A sequence is \emph{seminormalized} if it is bounded and bounded away from zero. We denote by $d(E)$ the set of disjoint sequences in $E\setminus\{0\}$ and by $dn(E)$ the set of normalized disjoint sequences in $E$. For a subset $A\subset E$, $\overline A$, $\con(A)$, $\acon(A)$, and $\sol(A)$ stand for its norm closure, convex hull, absolutely convex hull and solid hull, respectively; thus
\begin{equation*}
 \sol(A)=\{x\in E: |x|\leq |y|\text{ for some }y\in A\}.
\end{equation*}

\medskip

Operators are always linear and bounded, $\mathcal{B}(X,Y)$ denotes the space of all operators from $X$ into $Y$, and we write $\mathcal{B}(X)$ when $X=Y$. The identity operator is denoted by $I$. For $T\in\mathcal{B}(X,Y)$, we write $N(T)$ and $R(T)$ for its kernel and range, $T|_M$ for its restriction to a closed subspace $M\subset X$, and $T^*$ and $T^{**}$ for its first and second adjoints. Given a closed subspace $M$ of $X$, we denote by $J_M:M\to X$ the inclusion and by $Q_M:X\to X/M$ the quotient map. 
The annihilator of $M$ in $X^*$ is $M^\perp$.

\medskip

An operator $T\in\Lc(X,Y)$ is \emph{strictly singular} if there is no closed infinite-dimensional subspace $M$ of $X$ such that $T|_M$ is an isomorphism; the operator $T$ is \emph{upper semi-Fredholm} if $N(T)$ is finite-dimensional and $R(T)$ is closed, and it is \emph{lower semi-Fredholm} if $T^*$ is upper semi-Fredholm; equivalently, if $R(T)$ has finite codimension in $Y$. The operator $T$ is \emph{tauberian} if its second conjugate $T^{**}:X^{**}\to Y^{**}$ satisfies $T^{**-1}(Y) = X$, equivalently, if the induced operator $T^{\mathrm{co}}: X^{**}/X \to Y^{**}/Y$ is injective; and $T$ is called \emph{cotauberian} if $T^*$ is tauberian. We refer to \cite{TauberianL1,TauberianOperators,ManuelSaksman95,KaltonWilansky76} for further properties of tauberian and cotauberian operators and of the induced operator $T^{\mathrm{co}}$.

\medskip 

For a closed subspace $M$ of $X$ and $x\in X$, $d(x,M)=\inf_{y\in M}\Vert x-y\Vert$. If $M,N$ are closed subspaces of $X$, their one-sided gap and gap are
\begin{equation}
    \delta(M,N)=\sup_{x\in S_M}d(x,N),\qquad
 g(M,N)=\max\{\delta(M,N),\delta(N,M)\}.
\end{equation}

We write $L_p$ for $L_p[0,1]$, $1\leq p\leq \infty$. 
When we represent a Banach lattice as a K\"othe function space over a finite measure space $(\Omega,\Sigma,\mu)$, $\chi_A$ denotes the characteristic function of $A\in\Sigma$, and $\supp f$ is the essential support of a measurable function $f$. We write $A_n\downarrow\emptyset$ when $(A_n)$ is decreasing and $\mu(\bigcap_n A_n)=0$. 

\section{Preliminaries}

We use standard Banach lattice terminology and refer to \cite{Meyer-Nieberg:91} for the definitions and results used throughout. The norm of $E$ is \emph{order continuous} if $x_\alpha\downarrow0$ implies $\Vert x_\alpha\Vert\to0$. The order continuous part of $E$ is the closed ideal
\begin{equation*}
    E^a=\{x\in E: 0\leq x_n\leq |x|,\ x_n\downarrow0 \Longrightarrow \Vert x_n\Vert\to0\}.
\end{equation*}
Thus $E$ is order continuous precisely when $E=E^a$. A vector $e\in E_+$ is a \emph{weak unit} if $|x|\wedge e=0$ implies $x=0$. Every order continuous Banach lattice with a weak unit admits a representation as a K\"othe function space over a probability space $(\Omega,\Sigma,\mu)$; see \cite[Theorem~1.b.14]{LindenstraussTzafiri}. We fix such a representation whenever it is needed. In particular,
\begin{equation}
 L_\infty(\mu)\subset E\subset L_1(\mu),\qquad
 \Vert f\Vert_{L_1}\leq \Vert f\Vert_E\leq2\Vert f\Vert_{L_\infty}\quad(f\in L_\infty(\mu)),
\end{equation}
with $L_\infty(\mu)$ dense in $E$ and $E$ dense in $L_1(\mu)$.


Relative to this fixed representation, a closed subspace $M\subset E$ is called \emph{strongly embedded} if convergence in measure and norm convergence agree on $M$. Since norm convergence in $E$ implies convergence in measure, this means that every sequence $(x_n)\subset M$ which converges to zero in measure also converges to zero in norm.


\medskip

A sequence $(x_n)$ in $E$ is \emph{unbounded norm convergent} (or \emph{un-convergent}) to $x \in E$ if $(|x_n - x| \wedge u)$ converges in norm to $0$ for each $u \in E_+$~\cite{Troitsky17}. In this case we write $x_n \xrightarrow{un} x$. 


\subsection{Dispersed subspaces and disjointly non-singular operators.}{\label{sec: dispers}}

A closed subspace $M$ of a Banach lattice $E$ is \emph{dispersed} if no normalized disjoint sequence approaches it; equivalently, there is no $(x_n)\in dn(E)$ such that $d(x_n,M)\to0$. Thus dispersed subspaces contain no almost disjoint normalized sequence. This notion was introduced together with $\DNS$ operators in \cite{Manuel21} and has subsequently proved useful in the study of unbounded-norm convergence and phase retrieval; see \cite{Bilokopytov22,Freeman24,Manuel24}.

\begin{proposition}\label{prop: dispersed_strongly_embedded}
Let $E$ be an order continuous Banach lattice with a weak unit, and let $M$ be a closed subspace of $E$. Then $M$ is dispersed if and only if it is strongly embedded.
\end{proposition}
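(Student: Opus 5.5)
We prove both implications by contraposition. Throughout, $E$ is represented as a K\"othe function space over a probability space $(\Omega,\Sigma,\mu)$, with $E$ continuously included in $L_1(\mu)$ and norm convergence implying convergence in measure.

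\emph{Strongly embedded $\Rightarrow$ dispersed.} Suppose $M$ is not dispersed, so there are $(x_n)\in dn(E)$ and $y_n\in M$ with $\Vert x_n-y_n\Vert\to0$.
\begin{enumerate}[(1)]
\item A disjoint sequence in $L_1(\mu)$ converges to $0$ in measure. Indeed, for each $\varepsilon>0$ the sets $\{|x_n|>\varepsilon\}$ are pairwise disjoint, so their measures are summable and hence tend to $0$.
\item Since $\Vert x_n-y_n\Vert\to0$ and norm convergence implies convergence in measure, $(y_n)\subset M$ also converges to $0$ in measure.
\item On the other hand, $\Vert y_n\Vert\to1$, so $(y_n)$ does not converge to $0$ in norm.
\end{enumerate}
This shows $M$ is not strongly embedded.

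\emph{Dispersed $\Rightarrow$ strongly embedded.} Suppose $(y_n)\subset M$ converges to $0$ in measure but not in norm. Passing to a subsequence and normalizing, we may assume $\Vert y_n\Vert=1$ and $y_n\to0$ in measure; normalizing does not affect convergence in measure, since $\Vert y_n\Vert$ stays bounded away from $0$ along the subsequence. We construct a disjoint sequence close to a subsequence of $(y_n)$ by a gliding-hump argument that uses order continuity twice.
\begin{enumerate}[(1)]
\item \emph{Cutting off the head.} For a fixed $z\in E$, order continuity and dominated convergence give $\Vert z\chi_{A}\Vert\to0$ as $\mu(A)\to0$. This uniformity in the set is the standard fact that order continuous elements are absolutely continuous. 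Since $y_n\to0$ in measure, we can choose sets $A_n$ with $\mu(A_n)\to0$ such that $\Vert y_n\chi_{\Omega\setminus A_n}\Vert\to0$. Here we split $|y_n|$ into its part above and below a small threshold $\eta$; the part below $\eta$ has norm at most $2\eta$, using $\Vert\chi_\Omega\Vert\le 2$ from the representation.
\item \emph{Cutting off the tail.} Given $n_1<\dots<n_k$, the set $B_k=A_{n_1}\cup\dots\cup A_{n_k}$ is fixed and $y_{n_k}\chi_{B_k}$ is a single element. By absolute continuity of $y_{n_k}$, we pick $n_{k+1}$ so large that $\mu(A_{n_{k+1}})$ is tiny. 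We want $\Vert y_{n_j}\chi_{A_{n_{k+1}}}\Vert<2^{-k}\varepsilon$ for all $j\le k$; this is a finite condition, hence achievable.
\item \emph{Defining the disjoint pieces.} Set $D_k=A_{n_k}\setminus\bigcup_{j>k}A_{n_j}$. These sets are pairwise disjoint, and $\Vert y_{n_k}-y_{n_k}\chi_{D_k}\Vert\le \Vert y_{n_k}\chi_{\Omega\setminus A_{n_k}}\Vert+\sum_{j>k}\Vert y_{n_k}\chi_{A_{n_j}}\Vert\to0$.
\item \emph{Conclusion.} The elements $x_k=y_{n_k}\chi_{D_k}/\Vert y_{n_k}\chi_{D_k}\Vert$ are normalized and disjoint, with $d(x_k,M)\to0$. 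So $M$ is not dispersed.
\end{enumerate}

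The main obstacle is step (1) of the second implication: obtaining norm smallness of $y_n$ off a set of small measure from mere convergence in measure. The point to control is the lower-level part $|y_n|\chi_{\{|y_n|\le\eta\}}$, which is bounded by $\eta\chi_\Omega$ and so has norm at most $2\eta$. For the part above $\eta$, the set $\{|y_n|>\eta\}$ has measure tending to $0$, so it can serve as $A_n$. Everything else only uses absolute continuity of finitely many fixed elements, which follows from order continuity.
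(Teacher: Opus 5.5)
Your proof is correct, and it follows a genuinely different route from the paper. The paper argues entirely by citation. By \cite[Theorem~4.5]{Bilokopytov22}, $M$ is dispersed if and only if the un-topology and the norm topology agree on $M$, and by \cite{Troitsky17} un-convergence of sequences coincides with convergence in measure in the K\"othe representation. You instead prove both implications by hand. In one direction, disjoint sequences are null in measure because $\mu$ is finite, and $E\subset L_1(\mu)$ continuously. In the other, a normalized measure-null sequence in $M$ has, via a gliding hump, a subsequence that is a norm-small perturbation of a disjoint sequence; the hump only uses absolute continuity of finitely many fixed elements. That second step is essentially a measure-theoretic proof of the almost-disjoint-subsequence result \cite[Theorem~3.2]{Troitsky17}, which the paper itself invokes in the proof of Proposition~\ref{prop: count_disp_nonequi}. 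Your route buys self-containedness and shows what each direction really uses. One needs only the inclusion $E\subset L_1(\mu)$ and $\mu(\Omega)<\infty$. The other needs only order continuity (absolute continuity of single elements) and the inclusion $L_\infty(\mu)\subset E$ through $\Vert\chi_\Omega\Vert\le 2$. Being purely sequential, it also never has to relate the un-topology to un-convergence of sequences. The paper's route buys brevity and places the notion in the un-topology framework where dispersed subspaces were first studied.

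Two small points should be tightened. In step (1), a fixed threshold $\eta$ only gives $\limsup_n\Vert y_n\chi_{\Omega\setminus A_n}\Vert\le 2\eta$. To get the limit $0$, take $A_n=\{|y_n|>\eta_n\}$ with $\eta_n\downarrow 0$ chosen slowly enough that $\mu(A_n)\to 0$; this diagonal choice is possible because $y_n\to 0$ in measure. Alternatively, pass to a subsequence with $\mu(\{|y_n|>1/n\})<1/n$. In step (3), the countable subadditivity estimate is justified because the series $\sum_{j>k}|y_{n_k}|\chi_{A_{n_j}}$ converges in norm, by your choice of the $n_j$, and dominates $|y_{n_k}|\chi_{\bigcup_{j>k}A_{n_j}}$. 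The set $B_k$ in step (2) plays no role and can be dropped.
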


\begin{proof}
    By \cite[Theorem~4.5]{Bilokopytov22}, $M$ is dispersed if and only if the un-topology and the norm topology agree on $M$. In the fixed K\"othe representation, un-convergence of sequences agrees with convergence in measure by \cite[Corollary~2.12 and Theorem~4.6]{Troitsky17}. Hence the preceding condition is precisely strong embeddedness.
\end{proof}

An operator $T:E\to Y$ is \emph{disjointly strictly singular}, written $T\in\DSS(E,Y)$, if $T|_{[x_n]}$ is an isomorphism for no $(x_n)\in d(E)$. It is \emph{disjointly non-singular} \cite{Manuel21}, written $T\in\DNS(E,Y)$, if $T|_{[x_n]}$ is strictly singular for no $(x_n)\in d(E)$. Dually, an operator $S:X\to F$ is called \emph{codisjointly non-singular} \cite{Caldera26}, written $S\in\coDNS(X,F)$, if its adjoint $S^*:F^*\to X^*$ is disjointly non-singular. The following characterization combines the sequential and perturbative forms that are used later.

\begin{theorem}{\cite[Theorems~2.8 and 2.10]{Manuel24}, \cite[Theorem~5.3]{Bilokopytov22}}{\label{thm: DNS_caract}} For an operator $T: E \to Y$, the following statements are equivalent:
\begin{enumerate}[(i)]
    \item $T$ is $\DNS$.
    \item For every $(x_n) \in d(E)$, the restriction $T|_{[x_n]}$ is an upper semi-Fredholm operator.
    \item For every $(x_n) \in dn(E)$, $\liminf_{n \to \infty} \Vert Tx_n \Vert > 0$.
    \item $N(T+K)$ is dispersed for every compact operator $K: E \to Y$.
\end{enumerate}
Moreover, if $E$ is order continuous, the latter are equivalent to:
\begin{enumerate}[(i), start=5]
    \item There exists $r > 0$ such that for every $(x_n) \in dn(E)$, $\liminf_{n \to \infty} \Vert Tx_n \Vert > r$.
\end{enumerate}
In particular, $\DNS(E, Y)$ is an open subset of $\Lc(E,Y)$ whenever $E$ is order continuous.
\end{theorem}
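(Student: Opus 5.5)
The statement immediately above is Theorem~\ref{thm: DNS_caract}, and it is quoted from \cite{Manuel24,Bilokopytov22}. I would therefore prove it mainly by assembling known facts, arranging the implications as (i)$\Leftrightarrow$(ii), (ii)$\Rightarrow$(iii)$\Rightarrow$(i), (iii)$\Leftrightarrow$(iv), and finally (iii)$\Leftrightarrow$(v) under order continuity.

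\textbf{(i)$\Leftrightarrow$(ii).}
(ii)$\Rightarrow$(i) is immediate: an upper semi-Fredholm operator on an infinite-dimensional space is not strictly singular. For (i)$\Rightarrow$(ii), fix $(x_n)\in d(E)$.
\begin{itemize}
\item Every subsequence $(x_{n_k})$ is again disjoint, so $T|_{[x_{n_k}]}$ is not strictly singular.
\item Hence $T$ cannot be compact on any block subspace of $[x_n]$. If $T|_{[x_n]}$ failed to be upper semi-Fredholm, there would be a normalized block sequence of $(x_n)$ with $Tu_k\to0$; such blocks are again disjoint.
\item Passing to a subsequence with $\sum\|Tu_k\|$ small makes $T|_{[u_k]}$ compact, contradicting (i).
\end{itemize}

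\textbf{(ii)$\Rightarrow$(iii)$\Rightarrow$(i).}
For (ii)$\Rightarrow$(iii), if $\liminf\|Tx_n\|=0$ for some $(x_n)\in dn(E)$, a subsequence gives $\|Tx_{n_k}\|\to0$. Then $T|_{[x_{n_k}]}$ is not upper semi-Fredholm. The reverse direction (iii)$\Rightarrow$(i) is the block argument above read contrapositively: strict singularity of $T|_{[x_n]}$ yields normalized disjoint blocks $u_k$ with $Tu_k\to0$.

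\textbf{(iii)$\Leftrightarrow$(iv).}
\begin{itemize}
\item Suppose $N(T+K)$ is not dispersed. Take $(x_n)\in dn(E)$ with $d(x_n,N(T+K))\to0$. A normalized disjoint sequence has no norm-convergent subsequence, and in fact is weakly null after passing to a subsequence (Rosenthal-type dichotomy if needed). Then $Kx_n\to0$ along a subsequence, so $Tx_n\to0$, contradicting (iii).
\item Conversely, if $Tx_n\to0$ along $(x_n)\in dn(E)$, pass to a subsequence with $\sum\|Tx_n\|<\infty$. Choose functionals $x_n^*$ of norm at most $2$ biorthogonal to $(x_n)$; these exist via disjointness, e.g.\ norming functionals supported on the band of $x_n$.
\item Define $K=-\sum x_n^*\otimes Tx_n$. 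This $K$ is compact, and $(T+K)x_n=0$, so $N(T+K)$ contains all $x_n$ and is not dispersed.
\end{itemize}

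\textbf{(iii)$\Leftrightarrow$(v) and openness.}
The uniform constant in (v) is the main obstacle. I would argue by contradiction:
\begin{itemize}
\item Suppose there are $(x^{(k)}_n)_n\in dn(E)$ with $\liminf_n\|Tx^{(k)}_n\|<1/k$.
\item Using order continuity, each disjoint sequence is asymptotically concentrated on small sets; fixing a weak unit via the Köthe representation, a normalized disjoint sequence has supports of measure tending to $0$.
\item By a diagonal choice, select $y_k=x^{(k)}_{n_k}$ with $\|Ty_k\|<1/k$ and supports nearly disjoint. Truncating by order continuity then produces an almost disjoint sequence, which is perturbed to a genuinely disjoint normalized one violating (iii).
\end{itemize}

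Openness then follows from (v): if $\|S-T\|<r$, then $\liminf\|Sx_n\|>0$ for every $(x_n)\in dn(E)$, so $S$ is $\DNS$ by (iii).
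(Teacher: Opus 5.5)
The paper gives no proof of this theorem; it simply cites \cite{Manuel24} and \cite{Bilokopytov22}, so your outline has to stand on its own. Most of it does. The block-sequence arguments for (i)$\Leftrightarrow$(ii)$\Leftrightarrow$(iii) are correct. So is the compact operator $K=-\sum x_n^*\otimes Tx_n$ for (iv)$\Rightarrow$(iii); bounded biorthogonal functionals come from Hahn--Banach, since $|a_jx_j|\leq|\sum_k a_kx_k|$ for disjoint $(x_k)$. The openness deduction from (v) is also fine.

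\textbf{The gap is in (iii)$\Rightarrow$(iv).} You claim that a normalized disjoint sequence is weakly null after passing to a subsequence, and you use this to get $Kx_n\to0$. This is false in general. The unit vector basis of $\ell_1$, or any normalized disjoint sequence in $L_1$ (which spans an isometric copy of $\ell_1$), has no weakly null subsequence, by the Schur property. Rosenthal's dichotomy only yields weakly Cauchy or $\ell_1$ subsequences. Since (i)--(iv) are asserted for arbitrary $E$, and $L_1$ is even order continuous, the step fails as written.

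\textbf{How to repair it.} No weak nullity is needed. From $\Vert (T+K)x_n\Vert\leq\Vert T+K\Vert\, d(x_n,N(T+K))\to0$ and the compactness of $K$, pass to a subsequence on which $(Kx_n)$ converges; then $(Tx_n)$ converges too. Put $u_k=x_{2k}-x_{2k-1}$. These are pairwise disjoint with $1\leq\Vert u_k\Vert\leq 2$, because $|u_k|=|x_{2k}|+|x_{2k-1}|$. Hence $(u_k/\Vert u_k\Vert)\in dn(E)$ and $\Vert Tu_k\Vert\to0$, contradicting (iii). Equivalently, by (ii) and stability under compact perturbations, $(T+K)|_{[x_n]}$ is upper semi-Fredholm. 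That is incompatible with $(T+K)x_n\to0$ for a bounded sequence with no convergent subsequence.

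\textbf{On (iii)$\Rightarrow$(v).} Your sketch is on the right track but needs two precisions.
\begin{enumerate}
\item $E$ need not have a weak unit, so first restrict to the band generated by the countably many vectors $x^{(k)}_n$.
\item The truncation must act on the \emph{earlier} vectors, not the later ones: if $y_1$ has full support, nothing nonzero is disjoint from it. Choose each $y_k$ with support of measure so small that, for every $j$, removing from $y_j$ the union of the supports of the $y_i$ with $i>j$ changes $y_j$ only slightly in norm. This is possible because each $y_j$ has order continuous norm and the measures can be chosen with tails as small as needed. The truncated vectors are then disjoint, and after normalization they violate (iii).
\end{enumerate}
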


\subsection{Equi-integrable sets} 

We now turn from dispersed subspaces to the stronger set-theoretic notion that underlies the definition of $L$-tauberian operators.

\begin{definition}{\cite{Meyer-Nieberg:91}}
    We say that a bounded subset $C$ of a Banach lattice $E$ is \textit{equi-integrable} (or \textit{$L$-weakly compact}) if $\Vert x_n \Vert \to 0$ for every disjoint sequence $(x_n) \on \text{sol}(C)$. We also define a measure of non-equi-integrability for a bounded subset $C\subset E$ as
    \begin{equation}
        \mathcal{NI}(C) = \sup\bigg\{\limsup_n \Vert x_n \Vert: (x_n) \on \text{sol}(C), \, \, (x_n) \,  \text{disjoint} \bigg\}.
    \end{equation}
\end{definition}

\begin{remark}
    Recall that a bounded subset $C \on E$ is said to be \textit{almost order bounded} if for every $\epsilon > 0$ there is $u \in E_+$ such that $C \on [-u, u] + \epsilon B_E$. It is easy to show (see for instance \cite[Proposition~3.6.2]{Meyer-Nieberg:91}) that in an order continuous Banach lattice $E$, a bounded subset $C$ is equi-integrable if and only if $C$ is almost order bounded. However the converse fails if we drop the order continuity, since every bounded subset of $C[0,1]$ is almost order bounded, but $C[0,1]^a = \{0\}$, and hence the only equi-integrable set is $\{0\}$. 
\end{remark}

Let us see some basic properties of equi-integrable sets.




\begin{proposition}{\cite[Section~3.6]{Meyer-Nieberg:91}}{\label{prop: prop_equi}}
    Let $C, D$ be bounded subsets of $E$, $\lambda \in \R$ and $x \in E$.
    \begin{enumerate}[(i)]
        \item  $\{x\}$ is equi-integrable if and only if $x \in E^a$.
        \item If $C$ is equi-integrable, then $C$ is relatively weakly compact and $C \on E^a$.
        \item If $C \on E^a$ and $C$ is relatively compact, then $C$ is equi-integrable.
        \item If $C \on D$ and $D$ is equi-integrable, then $C$ is equi-integrable.
        \item If $C$ is equi-integrable, then $\lambda C$ is equi-integrable.
        \item If $C, D$ are equi-integrable, then $C + D$, $C \cup D$ and $C \cap D$ are equi-integrable.
        \item $C$ is equi-integrable if and only if $\overline{C}$, $\con(C)$ or $\sol(C)$ is equi-integrable.
    \end{enumerate}
\end{proposition}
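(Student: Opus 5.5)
The plan is to verify the seven items directly from the definition, working in order of increasing difficulty and reusing earlier items. The tools are the disjoint-sequence definition of equi-integrability, the equivalence with almost order boundedness in order continuous lattices (the preceding Remark), and standard facts from \cite[Section~3.6]{Meyer-Nieberg:91}.

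\emph{The monotonicity and scaling items.} Item (iv) is immediate because $\sol(C)\subset\sol(D)$. Item (v) follows since $\sol(\lambda C)=|\lambda|\sol(C)$, so disjoint sequences in $\sol(\lambda C)$ are scalar multiples of disjoint sequences in $\sol(C)$.

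\emph{Item (i).} If $x\in E^a$ and $(x_n)$ is disjoint with $|x_n|\le|x|$, then $(|x_n|)$ is a disjoint sequence in the order interval $[0,|x|]$. Order continuity of the ideal generated by $x$ gives $\|x_n\|\to0$; this is a standard characterization of $E^a$ via disjoint order bounded sequences \cite[Thm.~2.4.2]{Meyer-Nieberg:91}. Conversely, if $x\notin E^a$, the same theorem gives a disjoint sequence in $[0,|x|]$ that does not tend to zero in norm.

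\emph{Item (vi).} For $C\cup D$, a disjoint sequence in $\sol(C)\cup\sol(D)$ splits into two disjoint subsequences, each lying in one solid hull, so both are null. For $C+D$, take $|z_n|\le|c_n+d_n|\le|c_n|+|d_n|$ with $(z_n)$ disjoint. The Riesz decomposition gives $|z_n|=u_n+v_n$ with $0\le u_n\le|c_n|$ and $0\le v_n\le|d_n|$. Since $u_n,v_n\le|z_n|$, both $(u_n)$ and $(v_n)$ are disjoint, hence null, and so $\|z_n\|\to0$. The case $C\cap D$ follows from (iv).

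\emph{Item (vii).} The solid hull case is trivial because $\sol(\sol C)=\sol C$. For the closure, approximate each $x_n\in\sol(\overline C)$ by an element of $\sol(C)$ up to $\varepsilon$, using $|x|\le|y|$ and $y$ close to $c\in C$ so that $|x|\le|c|+|y-c|$. Then decompose again by Riesz and perturb; it is easier to argue that $\sol(\overline C)\subset\sol(C)+\varepsilon B_E$ in the sense that each element splits as $u+w$ with $u\in\sol C$ and $\|w\|\le\varepsilon$. A disjoint sequence there has $\limsup\|x_n\|\le\varepsilon$, provided the $u$-parts can be kept disjoint. This is done by taking $u_n\le|x_n|$ via Riesz, which preserves disjointness, so $\mathcal{NI}(\overline C)\le\varepsilon$ for every $\varepsilon$. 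For the convex hull, the quickest route is the known characterization \cite[Prop.~3.6.2/3.6.3]{Meyer-Nieberg:91}: $C$ is equi-integrable iff for every $\varepsilon>0$ there is $u\in E^a_+$ with $\|(|x|-u)^+\|\le\varepsilon$ for all $x\in C$. This condition passes to convex hulls since $(|\sum\lambda_ix_i|-u)^+\le\sum\lambda_i(|x_i|-u)^+$.

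\emph{Items (ii) and (iii).} For (ii), the containment $C\subset E^a$ follows from (i) and (iv). Relative weak compactness is the standard fact that $L$-weakly compact sets are relatively weakly compact \cite[Prop.~3.6.5]{Meyer-Nieberg:91}. It uses that such sets are almost order bounded by elements of the order continuous part, and order intervals of $E^a$ are weakly compact. For (iii), cover the relatively compact set $C$ by finitely many $\varepsilon$-balls centered at $x_1,\dots,x_k\in C$. Then $C\subset\{x_1,\dots,x_k\}+\varepsilon B_E$, and $\{x_1,\dots,x_k\}$ is equi-integrable by (i) and (vi). The perturbation argument from (vii) then shows $\mathcal{NI}(C)\le\varepsilon$.

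\emph{Expected obstacles.} The main effort lies in the convex hull part of (vii) and in relative weak compactness in (ii). Both rest on the almost order bounded characterization, which I will quote from Meyer-Nieberg rather than reprove.
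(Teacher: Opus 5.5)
The paper gives no proof of this proposition; it only refers to \cite[Section~3.6]{Meyer-Nieberg:91}. Your proposal is correct and follows the standard route of that section. Items (iii)--(vi), and the solid-hull and closure cases of (vii), reduce to the Riesz decomposition as you describe: the pieces are chosen below $|x_n|$, so they inherit disjointness. The nontrivial inputs are relative weak compactness and the almost-order-boundedness criterion you use for $\con(C)$, and you take both from the same source the paper cites. The reverse implications in (vii) are just (iv), which you may want to say explicitly.

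The one place needing care is (i).

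\emph{Which definition of $E^a$.} Your step ``order continuity of the ideal generated by $x$'' presupposes the usual definition of $E^a$ via nets. Under that definition, $E^a$ is a closed ideal with order continuous norm, so \cite[Thm.~2.4.2]{Meyer-Nieberg:91} applies inside $E^a\supset[0,|x|]$. With the sequential definition written in the paper's preliminaries, (i) is actually false. Take $E=\ell_\infty/c_0$ and suppose $0\le x_n\downarrow$ with $\Vert x_n\Vert\ge\varepsilon$. Choose pointwise decreasing representatives $f_n\ge0$ and indices $k_1<k_2<\cdots$ with $f_n(k_n)>\varepsilon/2$. Then the class of $\frac{\varepsilon}{2}\chi_{\{k_n:n\in\N\}}$ is a nonzero lower bound of $(x_n)$. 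Hence every element of $E$ satisfies the sequential condition. Yet the class of $\mathbf 1$ dominates the disjoint normalized classes of $\chi_{B_n}$, where $(B_n)$ are disjoint infinite subsets of $\N$. So $\{\mathbf 1\}$ is not equi-integrable, and you should state explicitly that you use the net definition.

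\emph{The converse of (i).} Applying Theorem~2.4.2 to the closed ideal $\overline{I_x}$ generated by $x$ gives a non-null disjoint sequence $(d_n)$ dominated by some $u\in(\overline{I_x})_+$, not by $|x|$. You still need to replace $d_n$ by $|d_n|\wedge m|x|$ for large $m$. This works because $\Vert(u-m|x|)^+\Vert\to0$, so you obtain a non-null disjoint sequence in $m[0,|x|]$.
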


\begin{remark}{\label{rem: sum_equi}}
    Indeed, we can prove in a similar way that, given bounded subsets $C,D$ of a Banach lattice $E$,
    $\mathcal{NI}(C + D) \leq \mathcal{NI}(C) + \mathcal{NI}(D)$ and $\mathcal{NI}(C) \leq \sup_{x \in C} \Vert x \Vert$.
\end{remark}

The following standard characterization of equi-integrability will be used
repeatedly; we include the proof for completeness.

\begin{proposition}{\label{cor: equi_kothe}}
    Let $E$ be an order continuous Banach lattice with a weak unit, and let $C$ be a bounded subset of $E$. The following are equivalent:
    \begin{enumerate}[(i)]
        \item $C$ is equi-integrable.
        \item If $(A_n)$ is a disjoint family of measurable sets, then $\sup_{f \in C}\Vert f\chi_{A_n}\Vert \to 0$ as $n \to \infty$.
        \item If $A_n \downarrow \emptyset$, then $\sup_{f \in C}\Vert f\chi_{A_n}\Vert \to 0$ as $n \to \infty$.
        \item If $\mu(A_n) \to 0$, then $\sup_{f \in C}\Vert f\chi_{A_n}\Vert \to 0$ as $n \to \infty$.
        \item For every $\epsilon > 0$ there is $\delta > 0$ such that $\Vert f \chi_A \Vert < \epsilon$ for every $f \in C$ and every measurable set $A$ with $\mu(A) < \delta$.
        \item For every $\epsilon > 0$ there is $M > 0$ such that $\Vert f \chi_{\{|f| > M\}} \Vert < \epsilon$ for every $f \in C$.
    \end{enumerate}
\end{proposition}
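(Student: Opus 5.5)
I will prove the cycle (i)$\Rightarrow$(ii)$\Rightarrow$(iii)$\Rightarrow$(iv)$\Rightarrow$(v)$\Rightarrow$(vi)$\Rightarrow$(i), together with some direct shortcuts. I work in the fixed K\"othe representation on the probability space $(\Omega,\Sigma,\mu)$, with $f\in C$ ranging over the bounded set. The trivial implications are (ii)$\Rightarrow$(iii) and (iv)$\Rightarrow$(iii), both obtained by replacing a disjoint sequence $(A_n)$ by a decreasing one or by a sequence of small measure, and the chain (iii)$\Rightarrow$(iv) via a standard argument.

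For (i)$\Rightarrow$(ii), suppose (ii) fails for some disjoint $(A_n)$. Then $\|f_n\chi_{A_n}\|\ge\epsilon$ for some $f_n\in C$ along a subsequence. The functions $x_n=f_n\chi_{A_n}$ are disjoint, lie in $\sol(C)$, and have norm bounded below. This contradicts (i).

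For (iii)$\Rightarrow$(iv), argue by contradiction. Suppose $\mu(A_n)\to0$ but $\|f_n\chi_{A_n}\|\ge\epsilon$ for some $f_n\in C$. Pass to a subsequence with $\sum\mu(A_n)<\infty$ and set $B_k=\bigcup_{n\ge k}A_n$, so that $B_k\downarrow\emptyset$. Since $A_k\subset B_k$, we get $\sup_f\|f\chi_{B_k}\|\ge\|f_k\chi_{A_k}\|\ge\epsilon$, contradicting (iii).

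For (iv)$\Leftrightarrow$(v), the two statements are the usual sequential and $\epsilon$--$\delta$ reformulations of the same condition, so they follow from one another directly.

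For (v)$\Rightarrow$(vi), use that $C$ is bounded in $E\subset L_1(\mu)$ with $\|\cdot\|_{L_1}\le\|\cdot\|_E$; I expect to need to check this inequality for all of $E$ and not only for $L_\infty$, and it holds in the standard Lindenstrauss--Tzafriri representation. Chebyshev's inequality then gives $\mu(\{|f|>M\})\le \sup_C\|f\|_{L_1}/M<\delta$ uniformly for $M$ large, and (v) applies to the sets $\{|f|>M\}$.

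The main step is (vi)$\Rightarrow$(i). Take a disjoint sequence $(x_n)\subset\sol(C)$ with $|x_n|\le|f_n|$ and $f_n\in C$, and fix $\epsilon>0$ together with the corresponding $M$ from (vi). Split
\[
|x_n|\le |f_n|\chi_{\{|f_n|>M\}}+\bigl(|x_n|\wedge M\mathbf 1\bigr),
\]
which holds because on the set $\{|f_n|\le M\}$ we have $|x_n|\le M$. The first term has norm less than $\epsilon$. The second term is a disjoint sequence inside the order interval $[0,M\mathbf 1]$. An order bounded disjoint sequence in an order continuous Banach lattice is norm null, since disjoint order bounded sequences converge to $0$ in norm under order continuity (Meyer-Nieberg). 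Hence $\limsup\|x_n\|\le\epsilon$, and since $\epsilon$ was arbitrary, $\|x_n\|\to0$. I expect this decomposition to be the only place where order continuity is genuinely used.
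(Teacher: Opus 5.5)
Your cycle breaks at (ii)$\Rightarrow$(iii). The justification you give, replacing a disjoint sequence by a decreasing one (i.e.\ passing from disjoint $(A_n)$ to $B_n=\bigcup_{k\geq n}A_k$ and using $\|f\chi_{A_n}\|\leq\|f\chi_{B_n}\|$), proves the converse (iii)$\Rightarrow$(ii). That is the easy direction, and it is exactly what the paper does. The direction you need is not formal. If $A_n\downarrow\emptyset$, then up to a null set $A_n=\bigcup_{k\geq n}(A_k\setminus A_{k+1})$, but knowing $\sup_{f\in C}\|f\chi_{A_k\setminus A_{k+1}}\|\to0$ gives no uniform control of the tails $\sup_{f\in C}\|f\chi_{A_n}\|$. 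None of your other arrows leads from (i) or (ii) back to (iii)--(vi). So as written you only have (iii)$\Rightarrow$(iv)$\Rightarrow$(v)$\Rightarrow$(vi)$\Rightarrow$(i)$\Rightarrow$(ii). The missing return is precisely the non-trivial content of the proposition, which the paper imports from \cite[Lemma~3.4]{Flores08} for (i)$\Rightarrow$(v) and \cite[Lemma~2.2]{Lesnik} for (ii)$\Rightarrow$(v).

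The gap closes with a gliding hump, and that argument uses order continuity, so your remark that order continuity enters only in (vi)$\Rightarrow$(i) is not accurate. Suppose $A_n\downarrow\emptyset$. The quantity $\sup_{f\in C}\|f\chi_{A_n}\|$ is nonincreasing in $n$, so if it does not tend to $0$ it is $\geq\epsilon>0$ for all $n$.
\begin{itemize}
\item Put $m_1=1$ and pick $f_1\in C$ with $\|f_1\chi_{A_{m_1}}\|>3\epsilon/4$.
\item Since $|f_1|\chi_{A_m}\downarrow0$ as $m\to\infty$, order continuity gives $m_2>m_1$ with $\|f_1\chi_{A_{m_2}}\|<\epsilon/4$.
\item Pick $f_2\in C$ with $\|f_2\chi_{A_{m_2}}\|>3\epsilon/4$, and continue in the same way.
\end{itemize}
The sets $D_k=A_{m_k}\setminus A_{m_{k+1}}$ are pairwise disjoint, and $\|f_k\chi_{D_k}\|\geq\|f_k\chi_{A_{m_k}}\|-\|f_k\chi_{A_{m_{k+1}}}\|>\epsilon/2$, contradicting (ii).

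With this repair the rest of your cycle is correct and gives a self-contained proof, which improves on the paper's reliance on citations.
\begin{itemize}
\item Your (iii)$\Rightarrow$(iv), via a subsequence with $\sum\mu(A_n)<\infty$ and $B_k=\bigcup_{n\geq k}A_n\downarrow\emptyset$, is sound.
\item Your (v)$\Rightarrow$(vi) coincides with the paper's Markov argument.
\item Your (vi)$\Rightarrow$(i) uses $|x_n|\leq|f_n|\chi_{\{|f_n|>M\}}+|x_n|\wedge M\mathbf{1}$ and the fact that order bounded disjoint sequences are norm null in order continuous lattices. This is more direct than the paper's (vi)$\Rightarrow$(v) followed by the cited (v)$\Rightarrow$(i).
\end{itemize}
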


\begin{proof}
    $(i) \Leftrightarrow (v)$ This can be found in \cite[Lemma~3.4]{Flores08}
    
    $(v) \Rightarrow (iv) \Rightarrow (iii)$ Immediate.

    $(iii) \Rightarrow (ii)$ Let $(A_n)$ be a disjoint family of measurable sets. If we consider the sets $B_n = \cup_{k=n}^{\infty} A_k$, since the measure space has finite measure it is easy to see that $B_n \downarrow 0$. Hence
    \begin{equation}
        0 \leq \sup_{f \in C}\Vert f\chi_{A_n}\Vert \leq \sup_{f \in C}\Vert f\chi_{B_n}\Vert \to 0.
    \end{equation}
    $(ii) \Rightarrow (v)$ The proof can be found in \cite[Lemma~2.2]{Lesnik}, since the argument still works when $E$ is an order continuous Köthe function space.

    $(v) \Rightarrow (vi)$ Consider an arbitrary $\epsilon > 0$ and let $K > 0$ such that $C \on K B_E$. Choose $\delta > 0$ such that  $\Vert f \chi_A \Vert < \epsilon$ for every $f \in C$ and every measurable set $A$ with $\mu(A) < \delta$. Take $M > K/\delta$. Then for every $f \in C$, $\mu(\{ |f| > M \}) \leq M^{-1}\Vert f \Vert \leq M^{-1}K < \delta$ by Markov's inequality, and hence $\sup_{f \in C} \Vert f \chi_{\{|f| > M\}}\Vert < \epsilon$.

    $(vi) \Rightarrow (v)$ Fix an arbitrary $\epsilon > 0$. Take $M > 0$ such that $\sup_{f \in C} \Vert f \chi_{\{|f| > M\}}\Vert < \epsilon/2$. Since $E$ is order continuous, there is $\delta > 0$ such that $\Vert \chi_A \Vert < \epsilon/(2M)$ whenever $\mu(A) < \delta$. Now, consider a measurable set $A$ with $\mu(A) < \delta$, and $f \in C$. Then
    \begin{equation}
        \Vert f \chi_A \Vert \leq \Vert f \chi_{A \cap \{|f| \leq  M \}} \Vert + \Vert f \chi_{A \cap \{|f| >  M \}} \Vert \leq M \Vert \chi_A \Vert + \Vert f \chi_{\{|f| > M \}}\Vert < \epsilon,
    \end{equation}
    as we wanted.
\end{proof}


For unit balls of subspaces, equi-integrability is always stronger than dispersedness. This elementary observation will later connect $L$-tauberian and $\DNS$ operators.

\begin{proposition}\label{prop: equi_ball_dispersed}
    Let $M$ be a closed subspace of a Banach lattice $E$. If $B_M$ is equi-integrable in $E$, then $M$ is dispersed. In particular, $M$ is reflexive.
\end{proposition}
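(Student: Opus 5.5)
The first claim follows directly from the definitions, arguing by contradiction. Suppose $B_M$ is equi-integrable but $M$ is not dispersed. Then there is $(x_n)\in dn(E)$ with $d(x_n,M)\to 0$. Choose $y_n\in M$ with $\Vert x_n-y_n\Vert\to0$; in particular $\Vert y_n\Vert\to1$. We may assume $\Vert y_n\Vert\le 2$, so $(y_n)\subset 2B_M$, and by Proposition~\ref{prop: prop_equi}(v) the set $2B_M$ is equi-integrable. The task is now to produce a disjoint sequence in $\sol(2B_M)$ that does not tend to zero in norm.

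The natural candidate is $z_n=|y_n|\wedge|x_n|$. Since $z_n\le |x_n|$, the $z_n$ are disjoint, and since $z_n\le|y_n|$, we have $z_n\in\sol(2B_M)$. The remaining estimate is
\[
\bigl||x_n|-|x_n|\wedge|y_n|\bigr| = (|x_n|-|y_n|)^+\le |x_n-y_n|,
\]
which gives $\Vert z_n\Vert\ge \Vert x_n\Vert-\Vert x_n-y_n\Vert\to1$. This contradicts the equi-integrability of $2B_M$, so $M$ is dispersed.

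Reflexivity is shorter: by Proposition~\ref{prop: prop_equi}(ii), an equi-integrable set is relatively weakly compact. Hence $B_M$ is weakly compact in $E$. Since $M$ is closed and convex, it is weakly closed, so $B_M$ is weakly compact in $M$ as well, and $M$ is reflexive by Kakutani's theorem. There is no genuine obstacle in either part; the only point needing care is the lattice inequality that controls $z_n$, and it is standard.
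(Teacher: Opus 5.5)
Your proof is correct and is essentially the paper's argument: the paper also takes $z_n=|x_n|\wedge|y_n|$, which is disjoint and lies in the solid hull of the ball, and gets the contradiction from $|x|-|x|\wedge|y|\le|x-y|$; the only difference is that the paper normalizes the approximants in $S_M$, whereas you work in $2B_M$. For reflexivity, also as in the paper, the facts you actually need are that $B_M$ is weakly closed in $E$ (being closed and convex) and that $\sigma(M,M^*)$ is the restriction of $\sigma(E,E^*)$ by Hahn--Banach; weak closedness of $M$ itself is not what does the work.
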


\begin{proof}
    Suppose that $M$ is not dispersed. Then there are sequences $(x_n)\subset S_M$ and $(y_n)\in dn(E)$ such that $\Vert x_n-y_n\Vert\to0$. The sequence
    \begin{equation*}
     z_n=|x_n|\wedge |y_n|
    \end{equation*}
    is disjoint and contained in $\sol(B_M)$, so equi-integrability gives $\Vert z_n\Vert\to0$. On the other hand,
    \begin{equation*}
     1=\Vert y_n\Vert
     \leq \Vert |y_n|-z_n\Vert+\Vert z_n\Vert
     \leq \Vert y_n-x_n\Vert+\Vert z_n\Vert\longrightarrow0,
    \end{equation*}
    a contradiction. Thus $M$ is dispersed. Finally, every equi-integrable bounded set is relatively weakly compact; see \cite[Proposition~3.6.5]{Meyer-Nieberg:91}. Therefore $B_M$ is weakly compact and $M$ is reflexive.
\end{proof}

The Rademacher span supplies the basic infinite-dimensional example that will recur in Section~4.

\begin{example}\label{ex: Rademacher}
    Let $(r_n)$ be the Rademacher system in $L_p$, $1\leq p<\infty$. Then $B_{[r_n]}$ is equi-integrable in $L_p$, and in particular $[r_n]$ is a dispersed subspace of $L_p$.
\end{example}

\begin{proof}
    Choose $q>p$. By Khintchine's inequality (cf. \cite[Theorem~6.2.3]{KaltonAlbiac}) there is a constant $C_{p,q}>0$ such that $\Vert f\Vert_q\leq C_{p,q}\Vert f\Vert_p$ for every $f\in [r_n]$. Hence, for every measurable set $A\subset[0,1]$ and every $f\in B_{[r_n]}$, H\"older's inequality gives
    \begin{equation}
     \Vert f\chi_A\Vert_p\leq \mu(A)^{1/p-1/q}\Vert f\Vert_q
     \leq C_{p,q}\mu(A)^{1/p-1/q}.
    \end{equation}
    Hence $B_{[r_n]}$ is equi-integrable by Proposition~\ref{cor: equi_kothe}. Finally, by Proposition~\ref{prop: equi_ball_dispersed} we conclude that $[r_n]$ is a dispersed subspace of $L_p$.
\end{proof}

We have seen that bounded equi-integrable sets are relatively weakly compact. We now show that they are, in fact, Banach--Saks sets. Recall that a subset $C$ of a Banach space $X$ is a \textit{Banach-Saks set} if for every sequence $(x_n) \on C$ there exist $x \in X$ and a subsequence $(x_{n_k})$ such that every further subsequence $(x_{n_{k_l}})$ has means converging in norm to $x$, i.e. $ (\sum_{l=1}^n x_{n_{k_l}})/n$ converges in norm to $x$. The proof of the following result adapts \cite[Theorem~3.1]{Lesnik}, where the conclusion is established for Banach function lattices with the Fatou property and containing $L_1\cap L_\infty$.

\begin{proposition}{\label{thm: equi_Banach_Saks}}
    Every bounded equi-integrable subset of a Banach lattice is Banach-Saks.
\end{proposition}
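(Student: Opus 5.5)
We reduce to a setting where the argument of \cite[Theorem~3.1]{Lesnik} applies. Let $C\subset E$ be bounded and equi-integrable, and let $(x_n)\subset C$. By Proposition~\ref{prop: prop_equi}(ii), $C\subset E^a$ and $C$ is relatively weakly compact. Passing to a subsequence, we may therefore assume that $x_n\to x$ weakly for some $x\in E^a$. Let $e=\sum_n 2^{-n}|x_n|/(1+\Vert x_n\Vert)$ and let $E_0$ be the band (equivalently, the closed ideal) generated by $e$ in $E^a$. Then $E_0$ is an order continuous Banach lattice with weak unit $e$ containing all $x_n$ and $x$. Equi-integrability of $\{x_n\}$ is intrinsic to the solid hull, which lies in $E_0$, so $\{x_n\}$ is equi-integrable in $E_0$. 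Norm convergence of Cesàro means in $E_0$ is the same as in $E$. Hence we may assume that $E$ is order continuous with a weak unit, represented as a Köthe space over a probability space $(\Omega,\Sigma,\mu)$. Replacing $x_n$ by $x_n-x$, which remains equi-integrable by Proposition~\ref{prop: prop_equi}(vi), we may also assume $x_n\to0$ weakly.

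Next we truncate. Fix $\epsilon>0$. By Proposition~\ref{cor: equi_kothe}(vi) there is $M>0$ with $\Vert x_n\chi_{\{|x_n|>M\}}\Vert<\epsilon$ for all $n$. Write $x_n=u_n+v_n$, where $u_n=x_n\chi_{\{|x_n|\le M\}}$ satisfies $|u_n|\le M\mathbf 1$ and $\Vert v_n\Vert<\epsilon$. The $u_n$ lie in a bounded subset of $L_2(\mu)$, so by the Banach--Saks property of $L_2$ (a Hilbert space) we can pass to a subsequence along which every further subsequence of $(u_n)$ has Cesàro means converging in $L_2$ to some $u$. The weak limit in $L_1$ of $u_n=x_n-v_n$ lies within $\epsilon$ of $0$ in norm. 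Because the $u_n$ are uniformly bounded by $M$, $L_2$-convergence implies convergence in measure with uniform bound $M$. Order continuity of $E$ (via Proposition~\ref{cor: equi_kothe}(v) applied to the constant $M\mathbf 1$) then upgrades this to convergence in $E$-norm. Consequently, the means of $u_n$ along any further subsequence converge in $E$ to $u$ with $\Vert u\Vert\lesssim\epsilon$, and every mean of the $v_n$ has norm less than $\epsilon$. Thus $\limsup\Vert \frac1k\sum_{l\le k}x_{n_l}\Vert\le C\epsilon$ for all further subsequences. (Alternatively, one can identify $u$ through the weak convergence $x_n\to 0$ tested against $L_\infty\subset E^*$, which gives $u=-\lim$ of the means of $v_n$.)

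Finally, we diagonalize over $\epsilon=1/j$, extracting nested subsequences and taking the diagonal, which is eventually a subsequence of each stage. For any further subsequence, the Cesàro means then have $\limsup$ norm at most $C/j$ for every $j$, hence converge to $0$. Note that a tail-shift of the diagonal subsequence does not alter Cesàro limits.

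The main obstacle is the step where convergence of the means of the truncated parts in $L_2$ is transferred to $E$-norm. This requires care: uniform boundedness together with order continuity must give convergence in $E$. To handle it, I would split $\Omega$ into the set where the mean differs from $u$ by more than $\eta$, which has small measure, and its complement, bounding the former by $2M\Vert\chi_A\Vert_E$ and the latter by $\eta\Vert\mathbf 1\Vert_E$.
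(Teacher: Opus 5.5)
Your argument is correct, but it takes a different route from the paper's. The paper first reduces to an order continuous lattice with a weak unit by passing to the separable closed sublattice generated by the sequence. It then applies Koml\'os' theorem to obtain one subsequence and one candidate limit $f\in L_1(\mu)$ such that the Ces\`aro means of every further subsequence converge to $f$ almost everywhere. Equi-integrability of $\con(C)$ together with Egorov's theorem then shows directly that these means are Cauchy in $E$, and $f\in E$ is identified only afterwards. You instead fix the limit in advance using relative weak compactness ($x_n\rightharpoonup 0$). You truncate at a level $M$ from Proposition~\ref{cor: equi_kothe}(vi), apply only the elementary Banach--Saks property of Hilbert space to the uniformly bounded parts, transfer $L_2$-convergence to $E$-convergence on $[-M\mathbf 1,M\mathbf 1]$ by order continuity, and diagonalize over $\epsilon=1/j$. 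So you trade a deep tool (Koml\'os) for Hilbert space geometry, at the cost of a truncation, an identification of the limit, and a diagonal extraction. The paper's proof is shorter and needs no prior knowledge of the limit. Your reduction through the ideal generated by $e$ is slightly cleaner than the sublattice reduction, since solid hulls, and hence equi-integrability, are unchanged.

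Two points should be stated precisely.

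First, you need the hereditary form of Banach--Saks in $L_2$: a subsequence all of whose further subsequences have means converging to the same $u$. The usual proof gives this. Pass to $u_n\rightharpoonup u$ and choose indices with $|\langle u_{n_i}-u,u_{n_j}-u\rangle|\le 2^{-j}$ for $i<j$.

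Second, the bound on $u$ must be in the $E$-norm. The estimate $\Vert u\Vert_{L_1}\le\epsilon$ does not give $\Vert u\Vert_E\lesssim\epsilon$ by itself, because $M$ depends on $\epsilon$. Argue instead as follows. Along any further subsequence, the means satisfy $\bar x_k\to 0$ weakly in $E$ and $\bar u_k\to u$ in $E$-norm. Hence $\bar v_k=\bar x_k-\bar u_k\to -u$ weakly in $E$, and weak lower semicontinuity of the norm gives $\Vert u\Vert_E\le\sup_k\Vert \bar v_k\Vert_E\le\epsilon$. Your parenthetical remark points at this. If you test only against $L_\infty$, you must also use that $L_\infty$ is norming for $E$, which holds for order continuous $E$; testing against all of $E^*$ avoids that step.
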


\begin{proof}
    First, we assume that $E$ is  order continuous  with a weak unit. Thus, we may see $E$ as an ideal of $L_1(\mu)$, where $\mu$ is a probability measure and $\Vert f \Vert_{1} \leq \Vert f \Vert \leq 2\Vert f \Vert_{\infty}$ for every $f \in L_{\infty}(\mu)$. Suppose that $C$ is a bounded equi-integrable subset of $E$ and let $(f_n) \on C$ be an arbitrary sequence. By Komlós theorem (cf. \cite{Komlos}) there is a subsequence $(f_{n_k})$ of $(f_n)$ and a function $f \in L_1(\mu)$ such that for any further subsequence $(f_{n_{k_l}})$ of $(f_{n_k})$, $\frac{1}{n}\sum_{l=1}^n f_{n_{k_l}} \to f $ $\mu$-a.e. Now, let $(f_{n_{k_l}})$ be an arbitrary subsequence of $(f_{n_k})$ and consider $g_n = \frac{1}{n}\sum_{l=1}^n f_{n_{k_l}}$. We will see that $(g_n)$ is a Cauchy sequence.
    
    Let $\epsilon > 0$. Since $C$ is equi-integrable, we know by Proposition~\ref{prop: prop_equi} that $(g_n) \on \con(C)$ is equi-integrable. Therefore, by Proposition~\ref{cor: equi_kothe} there is $0 < \delta < 1$ such that $\Vert g_n \chi_A \Vert < \epsilon$ for every $n \geq 1$ and every measurable set $A \on \Omega$ with $\mu(A) < \delta$. Now, since $g_n \to f$ $\mu$-a.e., by Egorov's theorem there is a measurable set $B \on \Omega$ such that $\mu(B) < \delta$ and $g_n$ converges uniformly to $f$ on $\Omega \setminus B$. Take $n_0 \in \N$ such that 
    \begin{equation}
        \Vert (g_n - f) \chi_{\Omega \setminus B} \Vert_{\infty} \leq \dfrac{\epsilon}{\Vert \chi_{\Omega \setminus B} \Vert},
    \end{equation}
    for every $n \geq n_0$. Then, for every $n, m \geq n_0$,
    \begin{equation}
        \begin{aligned}
            \Vert g_n - g_m \Vert & \leq \Vert (g_n - g_m)\chi_{B} \Vert + \Vert (g_n - g_m)\chi_{\Omega \setminus B} \Vert \\
            & \leq 2 \sup_k \Vert g_k\chi_{B} \Vert + 2\sup_{k \geq n_0} \Vert (g_n - f)\chi_{\Omega \setminus B} \Vert < 4\epsilon.
        \end{aligned}
    \end{equation}
    Therefore $(g_n)$ is a Cauchy sequence, and by the completeness of $E$ there exist $g \in E$ such that $\Vert g_n - g \Vert \to 0$. In particular $g_n \xrightarrow{\mu} g$, so $f = g$ and we conclude that $f \in E$ and $g_n \to f$ in $E$. Hence $C$ is Banach-Saks.

    Now, suppose that $E$ is a general Banach lattice. Since every equi-integrable subset lies in $E^a$ (Proposition~\ref{prop: prop_equi}), we may assume without loss of generality that $E$ is order continuous. Suppose by contradiction that there is a bounded equi-integrable subset $C$ of $E$ which is not a Banach-Saks set. Then, we can find a bounded equi-integrable sequence $(x_n)$ that is not Banach-Saks. Using that every closed sublattice of an order continuous Banach lattice is order continuous, and replacing $E$ by the closed sublattice generated by $(x_n)$, we may assume that $E$ is separable. It follows in particular that $E$ has a weak unit, yielding a contradiction with the first case proved.
\end{proof}

Recall that a subset of a Banach space is super-weakly compact if all of its ultraproducts are relatively weakly compact. Such  sets are Banach-Saks. The preceding theorem
therefore suggests the following natural strengthening.

\begin{question}
    Is every equi-integrable set super-weakly compact?
\end{question}

\section{L-tauberian operators and M-cotauberian operators}

We first recall $L$-weakly and $M$-weakly compact operators, which motivate the classes studied in this section.

\begin{definition} (\cite[Definition~3.6.9]{Meyer-Nieberg:91})
    \begin{enumerate}[(i)]      
         \item An operator $S:X \to F$ is \textit{$L$-weakly compact} if $S(B_X)$ is equi-integrable.  
        \item An operator $T:E \to Y$ is \textit{$M$-weakly compact} if $\|Tx_n\|\to 0$ for every disjoint sequence $(x_n)\subset B_E$. 
    \end{enumerate}
\end{definition}

We denote by $LW(X,F)$ and $MW(E,Y)$ the corresponding sets of operators, and by $W(X,Y)$ the weakly compact operators.

\begin{proposition}{\label{prop: prop_L-Mweak}} \cite[Section 3.6]{Meyer-Nieberg:91} Let $S:X \to F$ and $T:E \to Y$. 
    \begin{enumerate}[(i)]
        \item $S\in LW$ if and only if $S^*\in MW$. 
        \item $T\in MW$ if and only if $T^* \in LW$. 
        \item $LW(X,F)$ and $MW(E,Y)$ are closed subspaces of $\Lc(X,F)$ and $\Lc(E,Y)$. 
        \item $LW(X,F)\subset W(X,F)$ and $MW(E,Y)\subset W(E,Y)$. 
        \item If $F$ is order continuous, then $MW(E,F)\subset$ $LW(E,F)$
    \end{enumerate}
\end{proposition}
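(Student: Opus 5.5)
We follow the standard duality arguments in \cite[Section~3.6]{Meyer-Nieberg:91} and prove the five items in the order (iv), (ii), (i), (iii), (v).

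\emph{Item (iv).} For $S\in LW(X,F)$, the set $S(B_X)$ is equi-integrable, hence relatively weakly compact by Proposition~\ref{prop: prop_equi}(ii). So $S$ is weakly compact. For $T\in MW(E,Y)$ we defer the argument until (ii) is proved. Then $T^*\in LW\subset W$, and Gantmacher's theorem gives $T\in W$.

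\emph{Items (ii) and (i).} For (ii), suppose $T\in MW$ and, for contradiction, that $T^*(B_{Y^*})$ is not equi-integrable in $E^*$. Then there are a disjoint sequence $(\phi_n)$ in $\sol(T^*B_{Y^*})$ and $\epsilon>0$ with $\Vert\phi_n\Vert>\epsilon$. Write $|\phi_n|\le |T^*y_n^*|$.
\begin{itemize}
\item Using disjointness of the $\phi_n$ in the dual lattice, choose $x_n\in B_E$ with $\phi_n(x_n)>\epsilon$.
\item Apply the standard disjointification lemma for disjoint functionals (Meyer-Nieberg, Lemma~2.3.? / Theorem~2.4.14 type). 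Since $|\phi_n|$ are disjoint, one can replace $x_n$ by components lying in pairwise disjoint bands on which $\phi_n$ still has norm large. A diagonal/subsequence argument makes the $x_n$ disjoint in $B_E$ with $|\phi_n|(|x_n|)>\epsilon/2$.
\item Then $|T^*y_n^*|(|x_n|)>\epsilon/2$. By the Riesz--Kantorovich formula there is $|z_n|\le|x_n|$ with $|y_n^*(Tz_n)|>\epsilon/4$.
\item The $z_n$ are disjoint in $B_E$, so this contradicts $T\in MW$.
\end{itemize}
Conversely, if $T^*\in LW$ and $(x_n)\subset B_E$ is disjoint, pick $y_n^*\in B_{Y^*}$ with $y_n^*(Tx_n)=\Vert Tx_n\Vert$. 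Since $T^*y_n^*$ lies in an equi-integrable set, we use the characterization that $\sup_{\phi\in A}|\phi|(|x_n|)\to0$ for disjoint bounded $(x_n)$ whenever $A$ is $L$-weakly compact in the dual (Meyer-Nieberg 3.6.?). This gives $\Vert Tx_n\Vert\to0$.

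Item (i) is the mirror argument. $S\in LW$ iff for all disjoint bounded $(\phi_n)\subset F^*$ we have $\sup_{x\in B_X}|\phi_n(Sx)|\to0$, i.e.\ $\Vert S^*\phi_n\Vert\to0$. This is exactly $S^*\in MW$. The nontrivial direction again uses the duality between equi-integrability of a set $A\subset F$ and uniform convergence to $0$ on $A$ of disjoint bounded sequences in $F^*$.

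\emph{Item (iii).} Both classes are clearly linear subspaces: $MW$ directly from the definition, and $LW$ by Proposition~\ref{prop: prop_equi}(v),(vi). For closedness of $MW$, take $T_k\to T$ with $T_k\in MW$ and a disjoint $(x_n)\subset B_E$. Then
\[
\Vert Tx_n\Vert\le \Vert T-T_k\Vert+\Vert T_kx_n\Vert,
\]
and letting $n\to\infty$ and then $k\to\infty$ gives $\Vert Tx_n\Vert\to0$. Closedness of $LW$ follows from Remark~\ref{rem: sum_equi}, since
\[
\mathcal{NI}(S(B_X))\le \mathcal{NI}(S_k(B_X))+\Vert S-S_k\Vert.
\]
(Alternatively, use (i) and closedness of $MW$.)

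\emph{Item (v).} Let $F$ be order continuous and $T\in MW(E,F)$, and suppose $T(B_E)$ is not equi-integrable. Then there are a disjoint $(y_n)\subset\sol(TB_E)$ and $\epsilon>0$ with $\Vert y_n\Vert>\epsilon$. By order continuity, pick disjoint positive functionals $\psi_n\in B_{F^*}$ with $\psi_n(|y_n|)>\epsilon$, e.g.\ supported on the carriers of $y_n$. Now $\psi_n$ is a disjoint bounded sequence in $F^*$, and
\[
\Vert T^*\psi_n\Vert\ge \psi_n(|Tx_n|)\ge \psi_n(|y_n|)>\epsilon.
\]
This contradicts $T^*\in LW\subset$ the consequence of (ii) applied to disjoint functionals (i.e.\ $T^{*}$ maps disjoint bounded sequences to null sequences fails $MW$ of $T^{**}$). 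The \textbf{main obstacle} is the disjointification step in (ii) and (v), namely producing disjoint vectors or functionals that realize the norms of disjoint elements of the solid hull. Since this is classical, we cite \cite[Propositions~3.6.11--3.6.12 and Theorem~3.6.14]{Meyer-Nieberg:91} for it.
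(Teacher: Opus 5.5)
Your outline of (i)--(iv) is acceptable. Everything nontrivial there is the subsequence disjointification lemma: from disjoint $\phi_n\in E^*$ and $x_n\in B_E$ with $|\phi_n|(|x_n|)>\epsilon$, extract disjoint $z_k$ with $|z_k|\le|x_{n_k}|$ and $|\phi_{n_k}|(|z_k|)>\epsilon/2$. Outsourcing that lemma to Meyer-Nieberg is exactly what the paper does for the whole proposition. Read ``components in pairwise disjoint bands'' as this lemma: a general $E$ has no band projections, and the passage to a subsequence is genuinely needed.

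The genuine gap is in (v), and there are two problems.
\begin{enumerate}
\item The inequality $\Vert T^*\psi_n\Vert\ge\psi_n(|Tx_n|)$ is false as written. You only have $\Vert T^*\psi_n\Vert\ge|\psi_n(Tx_n)|$. You must therefore pass to functionals $g_n$ with $|g_n|\le\psi_n$ and $g_n(Tx_n)$ close to $\psi_n(|Tx_n|)$.
\item After that repair you have a disjoint sequence $(g_n)\subset B_{F^*}$ with $\Vert T^*g_n\Vert\not\to0$, i.e. $T^*\notin MW$. By (i) this is just a restatement of your assumption $T\notin LW$. It does not contradict $T^*\in LW$.
\end{enumerate}
Notice also that order continuity of $F$ plays no role in your argument; the carrier construction of the $\psi_n$ works in any Banach lattice. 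That is a warning sign.

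In fact no argument can close this step, because (v) is false as stated. Let $T:L_2\to\ell_2$ be given by $Tf=(\int_0^1 f r_k)_k$.
\begin{itemize}
\item Since $\Vert Tf\Vert_2=\sup\{|\int fg| : g\in B_{[r_k]}\}$, take any disjoint $(f_n)\subset B_{L_2}$ with supports $A_n$. Then $\Vert Tf_n\Vert_2\le\sup_{g\in B_{[r_k]}}\Vert g\chi_{A_n}\Vert_2\to0$ by Example~\ref{ex: Rademacher} and Proposition~\ref{cor: equi_kothe}, because $\mu(A_n)\to0$. So $T\in MW(L_2,\ell_2)$; equivalently, $T^*(B_{\ell_2})=B_{[r_k]}$ is equi-integrable.
\item However $Tr_k=e_k$, so $T(B_{L_2})$ contains a normalized disjoint sequence. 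Hence $T\notin LW(L_2,\ell_2)$, although $\ell_2$ is order continuous.
\end{itemize}

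An additional hypothesis is needed, for instance that $T$ be order bounded. The operator above is not: $\sup_{|f|\le 1}|\int f r_k|=1$ for every $k$. Under that hypothesis the proof is a truncation argument rather than a duality argument.
\begin{itemize}
\item M-weak compactness, via the standard truncation lemma for disjointly null lattice seminorms applied to $\rho(x)=\sup_{|y|\le|x|}\Vert Ty\Vert$, gives for each $\epsilon>0$ some $u\in E_+$ with $\Vert Ty\Vert\le\epsilon$ whenever $|y|\le(|x|-u)^+$ and $x\in B_E$.
\item Writing $x=(x^+\wedge u-x^-\wedge u)+y$ with $|y|=(|x|-u)^+$ yields $T(B_E)\subset T[-u,u]+\epsilon B_F\subset[-v,v]+\epsilon B_F$ for some $v\in F_+$.
\item Order continuity of $F$ gives $\mathcal{NI}([-v,v])=0$. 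Hence $\mathcal{NI}(T(B_E))\le\epsilon$ by Remark~\ref{rem: sum_equi}.
\end{itemize}
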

    
\begin{corollary}{\label{cor: inc_quot}}
    Let $M$ be a closed subspace of $E$. 
    \begin{enumerate}[(i)]
        \item $B_M$ is equi-integrable (in $E$) if and only if the inclusion $J_M:M\to E$ is $L$-weakly compact. 
        \item $B_{M^\perp}$ is equi-integrable (in $E^*$) if and only if the quotient map $Q_M:E\to E/M$ is $M$-weakly compact. 
    \end{enumerate}
\end{corollary}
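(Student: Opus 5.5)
Both parts will be read off directly from the definitions, with no earlier result needed beyond the definitions of $LW$ and $MW$ and elementary duality.

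For (i), note that $J_M(B_M)=B_M$, viewed as a subset of $E$. By the definition of $L$-weakly compact operators, $J_M$ is $L$-weakly compact exactly when $J_M(B_M)$ is equi-integrable in $E$. These are literally the same condition, so (i) is immediate.

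For (ii), I will use Proposition~\ref{prop: prop_L-Mweak}(ii): $Q_M\in MW(E,E/M)$ if and only if $Q_M^*\in LW((E/M)^*,E^*)$. The adjoint $Q_M^*:(E/M)^*\to E^*$ is the standard isometric isomorphism of $(E/M)^*$ onto $M^\perp$, followed by the inclusion $M^\perp\subset E^*$. Hence $Q_M^*(B_{(E/M)^*})=B_{M^\perp}$. It follows that $Q_M^*$ is $L$-weakly compact precisely when $B_{M^\perp}$ is equi-integrable in the Banach lattice $E^*$, which gives (ii).

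The only point needing care is the identification $Q_M^*(B_{(E/M)^*})=B_{M^\perp}$ as an equality, not merely an inclusion. This follows from the fact that $\varphi\mapsto\varphi\circ Q_M$ is a surjective isometry of $(E/M)^*$ onto $M^\perp$. Surjectivity holds because every functional vanishing on $M$ factors through $Q_M$ with the same norm. So there is no real obstacle here.
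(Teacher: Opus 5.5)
Your proof is correct and is exactly what the paper intends. The paper states this corollary without proof, immediately after Proposition~\ref{prop: prop_L-Mweak}: part (i) is the definition of $L$-weak compactness applied to $J_M(B_M)=B_M$, and part (ii) follows from part (ii) of that proposition together with the isometric identification $Q_M^*(B_{(E/M)^*})=B_{M^\perp}$, which you justify properly. One minor inaccuracy is your opening remark that nothing beyond the definitions and elementary duality is needed: the equivalence $Q_M\in MW\iff Q_M^*\in LW$ is a genuine theorem from \cite{Meyer-Nieberg:91}, though you do invoke it correctly.
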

    
\begin{definition}
    An operator $T:E \to Y$ is \textit{$L$-tauberian} if every bounded subset $C$ of $E$ with  $T(C)$ relatively compact is equi-integrable. Similarly, an operator $S:X \to F$ is \textit{$M$-cotauberian} if $S^*$ is $L$-tauberian.
\end{definition}

We write $LW_+(E,Y)$ for the class of $L$-tauberian operators from $E$ into $Y$, and $MW_-(X,F)$ for the class of $M$-cotauberian operators from $X$ into $F$. The terminology reflects the fact that $L$-tauberian operators on $L$-spaces are precisely the classical tauberian operators, while $M$-cotauberian operators on $M$-spaces are precisely the cotauberian operators (see Proposition~\ref{prop: tauberian_L_space} and Theorem~\ref{thm: L_taub_DNS_taub} below).

\begin{proposition}\label{prop: tauberian_L_space}
Let $E$ be an $L$-space and $F$ be an $M$-space. Then every tauberian operator $T:E\to Y$ is $L$-tauberian, and every cotauberian operator $S: X \to F$ is $M$-cotauberian.
\end{proposition}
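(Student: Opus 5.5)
Fix a tauberian operator $T:E\to Y$ with $E$ an $L$-space, and let $C\subset E$ be bounded with $T(C)$ relatively compact. We want $C$ to be equi-integrable. Since $E$ is an $L$-space, it is order continuous. By the remark following the definition of equi-integrability together with the Kadec--Pe{\l}czy\'nski/Dunford--Pettis theory in $L_1$, a bounded subset of an $L$-space is equi-integrable if and only if it is relatively weakly compact; see \cite[Section~3.6]{Meyer-Nieberg:91}, where it is shown that in AL-spaces $L$-weak compactness coincides with relative weak compactness. So it suffices to prove that $C$ is relatively weakly compact.

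This is the classical sequential property of tauberian operators. Let $(x_n)\subset C$. Passing to a subsequence, we may assume $(Tx_n)$ converges in norm to some $y\in Y$. We need a weakly convergent subsequence of $(x_n)$; by Eberlein--\v{S}mulian this suffices. Let $x^{**}$ be a weak$^*$ cluster point of $(x_n)$ in $E^{**}$. Then $T^{**}x^{**}$ is a weak$^*$ cluster point of $(Tx_n)$, hence $T^{**}x^{**}=y\in Y$. Since $T$ is tauberian, $x^{**}\in E$.

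This alone does not give weak convergence of a subsequence, so I would instead use the known characterization: $T$ is tauberian if and only if every bounded sequence $(x_n)$ with $(Tx_n)$ convergent has a weakly convergent subsequence. Equivalently, $T$ is tauberian if and only if every bounded set $C$ with $T(C)$ relatively weakly compact is relatively weakly compact; see \cite{TauberianOperators}. This applies here since relatively compact implies relatively weakly compact. To make the argument self-contained, one can alternatively argue with weak$^*$ cluster points of every subsequence: every weak$^*$ cluster point $x^{**}$ of every subsequence lies in $E$. Hence the weak$^*$ closure of $C$ in $E^{**}$ is contained in $E$ when $T(C)$ is relatively weakly compact. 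Indeed, if $x^{**}$ is in the weak$^*$ closure of $C$, then $T^{**}x^{**}$ lies in the weak$^*$ closure of $T(C)$, which is contained in $Y$. Therefore $C$ is relatively weakly compact by the usual criterion (Goldstine/Alaoglu). Thus $C$ is relatively weakly compact, hence equi-integrable, and $T\in LW_+(E,Y)$.

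For the second statement, let $S:X\to F$ be cotauberian with $F$ an $M$-space. Then $S^*:F^*\to X^*$ is tauberian, and $F^*$ is an $L$-space. The first part then gives that $S^*$ is $L$-tauberian, which is by definition the statement that $S$ is $M$-cotauberian. The main point to check carefully is the identification of equi-integrability with relative weak compactness in $L$-spaces. In $L_1(\mu)$ this is Dunford--Pettis combined with Proposition~\ref{cor: equi_kothe}(v). For a general abstract $L$-space, one reduces via the separable sublattice argument used in Proposition~\ref{thm: equi_Banach_Saks} and Kakutani's representation, or cites \cite[Proposition~3.6.5 and Section~3.6]{Meyer-Nieberg:91} directly.
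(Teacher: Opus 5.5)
Your proof is correct and follows the paper's route: $T$ tauberian makes $C$ relatively weakly compact, relative weak compactness gives equi-integrability in an $L$-space, and the $M$-space case follows by duality because $F^*$ is an $L$-space. The only difference is that you prove the first step directly with the weak$^*$-closure criterion, where the paper cites \cite[Corollary~2.2.5]{TauberianOperators}; note also that \cite[Proposition~3.6.5]{Meyer-Nieberg:91} gives the opposite implication (equi-integrable $\Rightarrow$ relatively weakly compact), whereas what you need is the $L$-space fact relatively weakly compact $\Rightarrow$ equi-integrable, which the paper takes from \cite[Section~3.6]{Meyer-Nieberg:91}.
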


\begin{proof}
Let $T:E\to Y$ be a tauberian operator and $C\subset E$ be bounded and suppose that $T(C)$ is relatively compact. Since $T$ is tauberian, $C$ is relatively weakly compact by \cite[Corollary~2.2.5]{TauberianOperators}. On an $L$-space, relatively weakly compact sets are $L$-weakly compact; see \cite[Section~3.6]{Meyer-Nieberg:91}. Hence $C$ is equi-integrable, and therefore $T$ is $L$-tauberian.

The dual statement follows from the fact that $F^*$ is an $L$-space whenever $F$ is an $M$-space (see \cite[Proposition~1.4.7]{Meyer-Nieberg:91}).
\end{proof}

The first point to settle is when these classes are non-empty. The answer also explains why order continuity is the natural standing hypothesis in the remainder of the section.

\begin{proposition}{\label{prop: non_trivial_L_taub}}
    Let $E,F$ be Banach lattices.
    \begin{enumerate}[(i)]
         \item If $LW_+(E, Y) \neq \emptyset$ for some Banach space $Y$, then $E$ is order continuous.
        \item If $E$ is order continuous, then every upper semi-Fredholm operator $T: E \to Y$ is $L$-tauberian.
    \end{enumerate}
    Similarly,
    \begin{enumerate}[(i), start = 3]
         \item If $MW_-(X, F) \neq \emptyset$ for some Banach space $X$, then $F^*$ is order continuous.
        \item If $F^*$ is order continuous, then every lower semi-Fredholm operator $T: X \to F$ is $M$-cotauberian.
    \end{enumerate}
\end{proposition}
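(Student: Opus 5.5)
For (i), I would take $T\in LW_+(E,Y)$ and test the definition on singletons. For any $x\in E$, the set $C=\{x\}$ is bounded and $T(C)=\{Tx\}$ is compact, so $\{x\}$ must be equi-integrable. By Proposition~\ref{prop: prop_equi}(i) this forces $x\in E^a$. Since $x$ was arbitrary, $E=E^a$, so $E$ is order continuous.

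For (iii), I would argue dually. If $S\in MW_-(X,F)$, then $S^*\in LW_+(F^*,X^*)$, and part (i) applied to the Banach lattice $F^*$ gives that $F^*$ is order continuous.

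For (ii), let $T:E\to Y$ be upper semi-Fredholm and $C\subset E$ bounded with $T(C)$ relatively compact. I would write $E=N(T)\oplus W$ with $W$ closed, using that $N(T)$ is finite-dimensional, and let $P$ be the bounded projection onto $N(T)$ along $W$. Since $R(T)$ is closed, $T|_W:W\to R(T)$ is an isomorphism. Hence $(I-P)(C)=(T|_W)^{-1}(T(C))$ is relatively compact. Also $P(C)$ is a bounded subset of a finite-dimensional space, so it is relatively compact. Therefore $C\subset P(C)+(I-P)(C)$ is relatively compact. Because $E=E^a$, Proposition~\ref{prop: prop_equi}(iii) makes $C$ equi-integrable. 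The only point needing care is the complement and the bounded inverse, and both are standard.

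For (iv), let $T:X\to F$ be lower semi-Fredholm. By the definition recalled in the notation, $T^*:F^*\to X^*$ is upper semi-Fredholm. Since $F^*$ is order continuous, part (ii) applied to $T^*$ on the order continuous lattice $F^*$ shows that $T^*$ is $L$-tauberian, which means $T$ is $M$-cotauberian. I do not expect any genuine obstacle; all four parts are direct reductions to Proposition~\ref{prop: prop_equi} and the definitions.
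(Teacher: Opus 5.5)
Your proposal is correct and follows the paper's proof. Parts (i), (iii) and (iv) are argued exactly as in the paper. In (ii) the paper cites \cite{GonzalezOnieva90} for the fact that for an upper semi-Fredholm $T$, relative compactness of $T(C)$ forces relative compactness of $C$; you prove that fact directly via the decomposition $E=N(T)\oplus W$ and the bounded inverse of $T|_W$, and then conclude with Proposition~\ref{prop: prop_equi}(iii) exactly as the paper does.
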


\begin{proof}
    $(i)$ Suppose that there is an $L$-tauberian operator $T: E \to Y$, and let $x \in E$. Since $\{Tx\}$ is compact, it follows that $\{x\}$ is equi-integrable, i.e. $x \in E^a$.

    $(ii)$ Suppose that $E$ is order continuous. Since every relatively compact subset is equi-integrable (Proposition~\ref{prop: prop_equi}), this follows from the fact that an operator $T: X \to Y$ between Banach spaces is upper semi-Fredholm if and only if for every bounded subset $C \on X$ for which $T(C)$ is relatively compact, $C$ is relatively compact \cite[p.~400]{GonzalezOnieva90}.

    $(iii)-(iv)$ These ones follow by duality from $(i)$ and $(ii)$ respectively.
\end{proof}

By Proposition~\ref{prop: non_trivial_L_taub}, the existence of an $L$-tauberian operator $T:E\to Y$ already forces $E$ to be order continuous. Nevertheless, we retain this hypothesis explicitly in the statements below for clarity and to keep the dual formulations symmetric. We begin with operators having closed range, for which the $L$-tauberian and $M$-cotauberian properties admit a simple description in terms of the kernel and the quotient by the range.

\begin{proposition}{\label{prop: L_tau_closed_range}}
    Let $E,F$ be Banach lattices such that $E, F^*$ are order continuous. 
    \begin{enumerate}[(i)]
        \item An operator $T:E\to Y$ with closed range is $L$-tauberian if and only if $B_{N(T)}$ is equi-integrable. 
        \item An operator $S:X\to F$ with closed range is $M$-cotauberian if and only if $Q_{\overline{R(S)}}$ is $M$-weakly compact.
    \end{enumerate}
\end{proposition}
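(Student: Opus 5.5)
For (i), necessity is the easy direction. If $T$ is $L$-tauberian, take $C=B_{N(T)}$. Then $T(C)=\{0\}$ is compact, so $C$ is equi-integrable by definition.

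For sufficiency, assume $B_{N(T)}$ is equi-integrable and let $C\subset E$ be bounded with $T(C)$ relatively compact. Since $R(T)$ is closed, the induced operator $\widehat T:E/N(T)\to R(T)$ is an isomorphism by the open mapping theorem. Hence $Q_{N(T)}(C)=\widehat T^{-1}(T(C))$ is relatively compact in $E/N(T)$. We now lift this. Fix $\epsilon>0$ and choose a finite $\epsilon$-net $\{Q_{N(T)}z_1,\dots,Q_{N(T)}z_k\}$ for $Q_{N(T)}(C)$, with $z_i\in E$. For each $x\in C$ there are $i$ and $y\in N(T)$ with $\Vert x-z_i-y\Vert<\epsilon$. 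Since $C$ is bounded, say by $K$, we may bound $\Vert y\Vert\le K+\max_i\Vert z_i\Vert+\epsilon=:R$. This gives
\begin{equation*}
C\subset \{z_1,\dots,z_k\}+R\,B_{N(T)}+\epsilon B_E .
\end{equation*}
The finite set $\{z_i\}$ lies in $E=E^a$ and is compact, so it is equi-integrable by Proposition~\ref{prop: prop_equi}(iii), and $\mathcal{NI}$ of it vanishes. The set $R\,B_{N(T)}$ is equi-integrable by hypothesis. By Remark~\ref{rem: sum_equi},
\begin{equation*}
\mathcal{NI}(C)\le 0+0+\epsilon .
\end{equation*}
(Here we use monotonicity of $\mathcal{NI}$ under inclusion, which is clear from the definition.) Since $\epsilon$ is arbitrary, $\mathcal{NI}(C)=0$, i.e.\ $C$ is equi-integrable. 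The only delicate point is this approximation step, in particular controlling the norm of the kernel component; the bound above suffices because every $x\in C$ has $\Vert x\Vert\le K$.

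For (ii), dualize. Since $R(S)$ is closed, $R(S^*)$ is closed and $N(S^*)=R(S)^\perp$. The lattice $F^*$ is order continuous, so (i) applies to $S^*:F^*\to X^*$. It shows that $S$ is $M$-cotauberian if and only if $B_{R(S)^\perp}$ is equi-integrable in $F^*$. By Corollary~\ref{cor: inc_quot}(ii), applied with $M=\overline{R(S)}=R(S)$, this holds if and only if $Q_{\overline{R(S)}}$ is $M$-weakly compact.
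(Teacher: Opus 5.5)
Your proof is correct and follows the paper's route. In (i) you pass through the isomorphism $\widehat T$ to get $Q_{N(T)}(C)$ relatively compact, lift a finite $\epsilon$-net to obtain $C\subset\{z_1,\dots,z_k\}+R\,B_{N(T)}+\epsilon B_E$, and conclude with the subadditivity of $\mathcal{NI}$ from Remark~\ref{rem: sum_equi}; in (ii) you dualize through $N(S^*)=R(S)^\perp$ and Corollary~\ref{cor: inc_quot}. The only cosmetic difference is that the paper assumes $C\subset B_E$ and takes the net points in $C$, which gives the constant $3$ where you have $R=K+\max_i\Vert z_i\Vert+\epsilon$.
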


\begin{proof}
    $(i)$ Note that $T$ is $L$-tauberian if and only if so is $Q_{N(T)}$. Indeed, in this case $\hat T$ is an isomorphism. So, given a bounded subset $C$ of $E$, $T(C)$ is relatively compact if and only if so is $Q_{N(T)}(C)$. 
    
    It remains to show that for a closed subspace $N$ of $E$, $Q_N$ is $L$-tauberian if and only if $B_N$ is equi-integrable. The direct implication is immediate. For the converse, let $C\subset B_E$ with $Q_N(C)$ relatively compact. For each $0<\varepsilon<1$ there exists $x_1,\ldots,x_n\in C$ such that $Q_N(C)\subset \cup_{i=1}^n B(\hat x_i,\varepsilon)$, where $\hat x_i= Q_N(x_i)$. Note that $Q_N$ takes the open unit ball of $E$ onto the open unit ball of $E/N$. Therefore, 
    $$
    C\subset 3B_N + \cup_{i=1}^n B(x_i,\varepsilon)=  3B_N + \{x_1,\ldots,x_n\}+ B(0,\varepsilon). 
    $$
    By Remark~\ref{rem: sum_equi} it follows that
    \begin{equation}
        \mathcal{NI}(C)\leq \mathcal{NI}(3B_N)+ \mathcal{NI}(\{x_1,\ldots,x_n\})+ \mathcal{NI}(B(0, \varepsilon)) = \mathcal{NI}(B(0, \varepsilon))\leq \varepsilon.
    \end{equation}
    Since $\epsilon > 0$ is arbitrary,  $C$ is equi-integrable and the proof is complete. 

    $(ii)$ The direct implication is immediate. For the converse, note that $S: X \to F$ has closed range if and only if $S^*: F \to X$ has closed range. Moreover, $N(S^*) = R(S)^\perp$, so $B_{N(S^*)}$ is equi-integrable by Corollary~\ref{cor: inc_quot}, and the result follows from $(i)$.
\end{proof}

When the range is not assumed to be closed, a similar characterization can be obtained by considering all compact perturbations of the operator. To establish it, we need the following two lemmas. 

\begin{lemma}{\label{lemma: equiint_order_continuous}}
    Let $(x_n)$ be a bounded sequence on an order continuous Banach lattice $E$. If $(x_n)$ is not equi-integrable, there is a subsequence $(x_{n_k})$ of $(x_n)$ such that no subsequence of $(x_{n_k}-x_{n_{k-1}})$ is equi-integrable.
\end{lemma}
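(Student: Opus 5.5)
The plan is to extract a subsequence that is norm-close to a normalized disjoint-type sequence in the solid hull. Any subsequence of the differences will then also carry disjoint pieces of norm bounded below.

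\emph{Step 1: reduce to a Köthe function space.} Replace $E$ by the closed ideal (band) generated by $(x_n)$. It is order continuous with a weak unit, and disjoint sequences in its solid hull are the same as in $E$. So I may work in a Köthe representation over a probability space and use Proposition~\ref{cor: equi_kothe}.

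\emph{Step 2: extract disjoint bumps from the sequence.} Since $(x_n)$ is not equi-integrable, criterion (v) of Proposition~\ref{cor: equi_kothe} fails. There is $\epsilon>0$ such that for every $\delta>0$ some $n$ and some set $A$ with $\mu(A)<\delta$ satisfy $\Vert x_n\chi_A\Vert\ge\epsilon$.

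Any finite set $\{x_1,\dots,x_N\}$ is equi-integrable, by Proposition~\ref{prop: prop_equi}(i),(vi) and order continuity. So as $\delta\to0$ the indices $n$ must tend to infinity.

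I build inductively $n_1<n_2<\cdots$ and sets $A_k$ with $\Vert x_{n_k}\chi_{A_k}\Vert\ge\epsilon$. At step $k$ I choose $\delta_k$ so small that two things hold:
\begin{itemize}
\item $\Vert x_{n_j}\chi_B\Vert<\epsilon 2^{-k-3}$ for all $j<k$ whenever $\mu(B)<\delta_k$;
\item $\delta_k<\delta_{k-1}/2$.
\end{itemize}
Then I pick $n_k$ and $A_k$ with $\mu(A_k)<\delta_k$.

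Next set $D_k=A_k\setminus\bigcup_{j>k}A_j$. These sets are disjoint. Moreover $\mu(\bigcup_{j>k}A_j)<2\delta_{k+1}$, which is small relative to $x_{n_k}$. Hence $\Vert x_{n_k}\chi_{D_k}\Vert\ge\epsilon/2$, and $\Vert x_{n_j}\chi_{D_k}\Vert$ is tiny for $j<k$.

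Also, for $j>k$, the quantity $\Vert x_{n_j}\chi_{D_k}\Vert$ is only bounded, not small. This is the main obstacle. I handle it with a further diagonal extraction: pass to a subsequence along which $x_{n_j}\chi_{D_k}$ behaves well for $j>k$. Order continuity makes $\Vert x\chi_{B}\Vert$ small when $\mu(B)$ is small for a fixed $x$, but the later terms vary. So I instead re-index the construction so that the relevant smallness runs in the other direction.

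\emph{Step 3: conclude.} Let $(y_k)=(x_{n_k})$ be the subsequence. For any further subsequence $k_l$, consider
\[
z_l=(y_{k_l}-y_{k_{l-1}})\chi_{D_{k_l}}.
\]
These are disjoint and lie in the solid hull of the differences. Their norm is at least
\[
\Vert y_{k_l}\chi_{D_{k_l}}\Vert-\Vert y_{k_{l-1}}\chi_{D_{k_l}}\Vert\ge\epsilon/2-\epsilon 2^{-k_l}\ge\epsilon/4,
\]
because $k_{l-1}<k_l$ falls in the controlled (earlier-index) case. So no subsequence of $(y_{k_l}-y_{k_{l-1}})$ is equi-integrable.

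This needs only the smallness of earlier terms on $D_k$, which the construction provides. So the worry at the end of Step 2 is unnecessary: the differences pair each term only with the previous one, which is exactly the direction controlled in Step 2.

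The one genuinely delicate point is controlling $\Vert y_{k_l}\chi_{D_{k_l}}\Vert\ge\epsilon/2$ after removing the later sets $A_j$. I handle it via the geometric decay of $\delta_j$ together with the continuity of $B\mapsto\Vert y_k\chi_B\Vert$ at small measure, choosing each $\delta_{j}$ also below the threshold for $x_{n_1},\dots,x_{n_{j-1}}$.
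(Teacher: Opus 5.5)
Your proof is correct, but it takes a genuinely different route from the paper's. The paper stays in the abstract lattice. After passing to a subsequence it fixes a disjoint $(y_n)$ with $|y_n|\leq|x_n|$ and $\Vert y_n\Vert\geq c$. At each step it uses only that the single vector $x_{n_{k-1}}$ lies in $E^a$, so $\Vert |y_n|\wedge|x_{n_{k-1}}|\Vert\to0$. Choosing $n_k$ with this quantity below $c/2$, the vectors $\tilde y_k=(|y_{n_k}|-|x_{n_{k-1}}|)_+\leq|x_{n_k}-x_{n_{k-1}}|$ are disjoint with norm at least $c/2$.

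You instead pass to the band generated by $(x_n)$ and use criterion (v) of Proposition~\ref{cor: equi_kothe}. You pick sets $A_k$ of geometrically decreasing measure and disjointify them by deleting later sets. The earlier terms are then made small on $D_k$ by uniform absolute continuity, with $\Vert (y_k-y_j)\chi_{D_k}\Vert\geq \Vert y_k\chi_{D_k}\Vert-\Vert y_j\chi_{D_k}\Vert$ replacing the paper's positive-part inequality.

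Your route is longer and needs the K\"othe representation. In exchange, it controls all earlier indices $j<k$ at once. So it covers subsequences $(y_{k_l}-y_{k_l-1})$ of the difference sequence, which is what the lemma asserts. It also covers differences $y_{k_l}-y_{k_{l-1}}$ along further subsequences, which is what your Step~3 literally writes. The paper's argument would give the latter too if $|x_{n_{k-1}}|$ were replaced by $\sum_{j<k}|x_{n_j}|$.

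A few details need tidying.
\begin{enumerate}
\item The bound $\mu(\bigcup_{j>k}A_j)<2\delta_{k+1}$ does not fall under the threshold you imposed at step $k+1$, which concerns sets of measure less than $\delta_{k+1}$. Argue instead that
\[
\Vert x_{n_k}\chi_{\bigcup_{j>k}A_j}\Vert\leq\sum_{j>k}\Vert x_{n_k}\chi_{A_j}\Vert<\sum_{j>k}\epsilon 2^{-j-3}=\epsilon 2^{-k-3},
\]
using the threshold from each step $j>k$.
\item Make explicit that $\delta_k$ is also chosen so that $\Vert x_i\chi_B\Vert<\epsilon$ for all $i\leq n_{k-1}$ whenever $\mu(B)<\delta_k$; this forces $n_k>n_{k-1}$.
\item The paragraph about controlling $j>k$ and re-indexing the construction is unnecessary and can be deleted.
\end{enumerate}
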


\begin{proof}
    If $(x_n)$ is not equi-integrable, by passing to a subsequence if necessary, we may assume that there is a disjoint sequence $(y_n)$ such that $|y_n| \leq |x_n|$ and $\Vert y_n \Vert \geq c > 0$ for every $n \geq 1$. We will construct a subsequence of $(x_n)$ as follows.

    First, take $n_1 = 1$, $n_0 = 0$, $x_0 = 0$ and $\tilde{y}_1 = y_1$. Clearly $|\tilde{y}_1| \leq |x_{n_1} - x_{n_0}|$ and $\Vert \tilde{y}_1 \Vert \geq c \geq c/2$. Now, since $x_{n_1} \in E^a$, we know that $\lim_n \Vert |y_n| \wedge |x_{n_1}| \Vert = 0$. Hence, we can choose $n_2 > n_1$ such that $\Vert |y_{n_2}| \wedge |x_{n_1}| \Vert < c/2$. Take $\tilde{y}_2 = |y_{n_2}| - |y_{n_2}|\wedge |x_{n_1}|$. Clearly $\Vert \tilde{y}_2 \Vert \geq c/2$, and
    \begin{equation}
        0 \leq \tilde{y}_2 = (|y_{n_2}| - |x_{n_1}|)_+ \leq (|x_{n_2}| - |x_{n_1}|)_+ \leq  ||x_{n_2}| - |x_{n_1}|| \leq |x_{n_2} - x_{n_1}|.
    \end{equation}
    Since $x_{n_2} \in E^a$, we can choose $n_3 > n_2$ such that $\Vert |y_{n_3}| \wedge |x_{n_2}| \Vert < c/2$. Continuing this process we obtain a subsequence $(x_{n_k})$ of $(x_n)$, and a sequence $(\tilde{y}_k)$ such that $\tilde{y}_k = |y_{n_k}| - |y_{n_k}|\wedge |x_{n_{k-1}}|$, $0 \leq \tilde{y}_k \leq |x_{n_k} - x_{n_{k-1}}|$ and $\Vert \tilde{y}_k \Vert \geq c/2$. Moreover $\tilde{y}_k \leq |y_{n_k}|$, so $(\tilde{y}_k)$ is disjoint. Thus, no subsequence of $(x_{n_k}-x_{n_{k-1}})$ is equi-integrable, just as we wanted.
\end{proof}

\begin{lemma}{\label{lemma: equiint_order_continuous_conv}}
    Let $(x_n)$ be a bounded sequence on an order continuous Banach lattice $E$. If $(x_n)$ is not equi-integrable and $x \in E$,   there is a subsequence $(x_{n_k})$ such that no subsequence of $(x_{n_k}-x)$ is equi-integrable.
\end{lemma}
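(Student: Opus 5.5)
We need to prove Lemma~\ref{lemma: equiint_order_continuous_conv}: if the bounded sequence $(x_n)$ in the order continuous lattice $E$ is not equi-integrable and $x\in E$, then some subsequence $(x_{n_k})$ has the property that no further subsequence of $(x_{n_k}-x)$ is equi-integrable. The argument is the same as in Lemma~\ref{lemma: equiint_order_continuous}, with the fixed vector $x$ playing the role of the previous term $x_{n_{k-1}}$.

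First we pass to a subsequence so that a disjoint sequence $(y_n)$ exists with $|y_n|\le |x_n|$ and $\Vert y_n\Vert\ge c>0$. Since $E$ is order continuous, $x\in E^a$, so $\{x\}$ is equi-integrable by Proposition~\ref{prop: prop_equi}. The sequence $(|y_n|\wedge|x|)$ is disjoint and lies in $\sol(\{x\})$, hence $\Vert |y_n|\wedge |x|\Vert\to 0$. We choose $n_1<n_2<\cdots$ with $\Vert |y_{n_k}|\wedge|x|\Vert<c/2$ for all $k$.

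Next we set $\tilde y_k=|y_{n_k}|-|y_{n_k}|\wedge|x|=(|y_{n_k}|-|x|)_+$. Then $\Vert\tilde y_k\Vert\ge c/2$. We also have
\[
0\le\tilde y_k\le (|x_{n_k}|-|x|)_+\le \bigl||x_{n_k}|-|x|\bigr|\le |x_{n_k}-x|,
\]
and $\tilde y_k\le|y_{n_k}|$, so the $\tilde y_k$ are disjoint.

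Now take any subsequence of $(x_{n_k}-x)$. Its solid hull contains the corresponding $\tilde y_{k_j}$, which form a disjoint sequence with norms bounded below by $c/2$. Hence that subsequence is not equi-integrable, which proves the lemma.

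There is no real obstacle. The only point needing care is the lattice inequality $(|y|-|x|)_+\le|x_n-x|$ when $|y|\le|x_n|$, which is the same elementary estimate used in the previous lemma.
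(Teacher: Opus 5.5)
Your proposal is correct and follows essentially the same argument as the paper: the same disjoint minorants $\tilde y=(|y|-|x|)_+$, the same estimate $\Vert |y_n|\wedge|x|\Vert\to0$ from order continuity, and the same lattice inequality $(|y|-|x|)_+\le|x_n-x|$. The only cosmetic difference is that the paper passes to a tail $n\ge n_0$ rather than extracting a subsequence.
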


\begin{proof}
    If $(x_n)$ is not equi-integrable, by passing to a subsequence if necessary, we may assume that there is a disjoint sequence $(y_n)$ such that $|y_n| \leq |x_n|$ and $\Vert y_n \Vert \geq c > 0$ for every $n \geq 1$. Since $E$ is order continuous, we know that $\lim_n \Vert |y_n| \wedge |x| \Vert = 0$. Hence, there is $n_0 \in \N$ such that $\Vert |y_{n}| \wedge |x| \Vert < c/2$ for every $n \geq n_0$. Take $\tilde{y}_n = |y_n| - |y_n|\wedge |x|$. Clearly $(\tilde{y}_n)$ is disjoint, $\Vert \tilde{y}_n \Vert \geq c/2$, and
    \begin{equation}
        0 \leq \tilde{y}_n = (|y_{n}| - |x|)_+ \leq (|x_n| - |x|)_+ \leq  ||x_{n}| - |x|| \leq |x_{n} - x|,
    \end{equation}
    for every $n \geq n_0$. Thus no subsequence of $(x_{n}-x)_{n \geq n_0}$ is equi-integrable.
\end{proof}

We will need the following criterion for the existence of basic subsequences. 

\begin{theorem} \cite[Theorem~1.5.6]{KaltonAlbiac}{\label{thm: eber_smul}}
    Let $S\subset X$ bounded with $0\notin\overline{S}$. Then $S$ contains no basic subsequence if and only if $\overline{S}^w$ is weakly compact and $0\notin\overline{S}^w$. 
\end{theorem}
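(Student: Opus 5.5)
The plan is to prove both implications by reducing to sequences with Eberlein--\v{S}mulian, and then to use Mazur's construction of basic sequences. We may assume $X$ is separable wherever convenient, since a basic sequence only lives in a separable subspace.

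\emph{($\Leftarrow$).} Assume $\overline{S}^w$ is weakly compact and $0\notin\overline{S}^w$, and suppose $(x_n)\subset S$ is basic. By Eberlein--\v{S}mulian a subsequence, still denoted $(x_n)$ and still basic, converges weakly to some $x\in\overline{S}^w$, so $x\neq 0$. By Mazur's theorem $x$ belongs to the closed span $[x_n]$. Extend the biorthogonal functionals $x_k^*$ of $(x_n)$ to $X$ by Hahn--Banach. Then $x_k^*(x)=\lim_n x_k^*(x_n)=0$ for every $k$. Since $x\in[x_n]$, its basis expansion forces $x=0$, a contradiction.

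\emph{($\Rightarrow$).} Assume $S$ contains no basic sequence. The main tool is the following Mazur-type lemma: if $A\subset X$ is bounded, $0\notin\overline{A}$ in norm, and $0$ lies in the weak$^*$ closure of $A$ in $X^{**}$, then $A$ contains a basic sequence. To prove it, take a finite-dimensional $G\subset X$ and a finite $1/2$-norming set of functionals for $G$. Since $0$ is a weak$^*$ cluster point of $A$, we can pick $x\in A$ on which all these functionals are tiny. Then $\|g+\lambda x\|\geq(1-\varepsilon)\|g\|$ for all $g\in G$ and scalars $\lambda$. Iterating with $\varepsilon_n$ summable gives a basic sequence.

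With the lemma in hand, we proceed in three steps.

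\emph{Step 1.} $0\notin\overline{S}^{w^*}$ (closure in $X^{**}$), directly by the lemma. In particular $0\notin\overline{S}^w$.

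\emph{Step 2.} Every sequence $(x_n)\subset S$ has a weakly Cauchy subsequence. Otherwise Rosenthal's $\ell_1$ theorem yields a subsequence equivalent to the $\ell_1$ basis, which is basic.

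\emph{Step 3.} Every weakly Cauchy $(x_n)\subset S$ converges weakly in $X$. Suppose instead that its weak$^*$ limit is $x^{**}\in X^{**}\setminus X$. The aim is to run the Mazur construction around the point $x^{**}$ rather than $0$. Let $\delta=d(x^{**},X)>0$. Given $G$ finite-dimensional in $X$, choose norming functionals for $G+\mathbb{R}x^{**}$, using Goldstine/Helly. Then choose $x_n$ far out so that these functionals almost agree on $x_n$ and $x^{**}$. Together with $\|g-\lambda x^{**}\|\geq|\lambda|\delta$ and $\|g-\lambda x^{**}\|\geq \|g\|-|\lambda|\|x^{**}\|$, this gives $\|g+\lambda x_n\|\geq c\|g\|$ for a uniform $c>0$ once the errors are chosen summable. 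Hence a subsequence is basic, a contradiction.

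Steps 2 and 3 together show that every sequence in $S$ has a weakly convergent subsequence with limit in $X$. By Eberlein--\v{S}mulian, $\overline{S}^w$ is weakly compact, and with Step 1 the claim follows.

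I expect Step 3, the nontrivial weakly Cauchy case, to be the main obstacle. The difficulty is getting a uniform lower constant in the Mazur construction when the cluster point $x^{**}$ is nonzero but outside $X$. I would first try the estimate above, using $d(x^{**},X)>0$. If that proves awkward, the fallback is to apply the Step 1 lemma to the differences $x_{n_{k}}-x_{n_{k-1}}$, which are weak$^*$-null and bounded away from zero, and then recover a basic subsequence of the original sequence. That recovery step needs checking.
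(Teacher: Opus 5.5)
The paper gives no proof of this statement; it quotes it from Albiac--Kalton. Your ($\Leftarrow$) direction is the standard argument and is correct. Steps 1--2 are fine up to a slip in the Mazur-type lemma: the finite sets of functionals must be $(1-\varepsilon_n)$-norming with $\sum_n\varepsilon_n<\infty$, not $1/2$-norming, and large $|\lambda|$ has to be handled via $\inf_{a\in A}\Vert a\Vert>0$.

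The genuine gap is in Step 3. A per-step estimate $\Vert g+\lambda x_n\Vert\ge c\Vert g\Vert$ for $g\in[x_1,\dots,x_{n-1}]$ with a fixed $c<1$ does not make $(x_n)$ basic. Iterating it only gives $\Vert\sum_{i\le m}a_ix_i\Vert\le c^{-(n-m)}\Vert\sum_{i\le n}a_ix_i\Vert$, and making the errors summable does not change that. For instance, in $\ell_2$ the vectors $x_1=e_0$, $x_n=e_0+e_n$ ($n\ge2$) satisfy such an estimate with $c=1/\sqrt2$, yet $x_1=\lim_N\frac1N\sum_{n=2}^{N+1}x_n\in[x_n:n\ge2]$, so $(x_n)$ is not basic. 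Nor can you push $c$ towards $1$ in your situation. For the summing basis $s_n=\sum_{i\le n}e_i$ of $c_0$, whose weak$^*$ limit $(1,1,\dots)$ lies outside $c_0$, one has $\Vert s_{n_{k-1}}-\tfrac12 s_{n_k}\Vert=\tfrac12\Vert s_{n_{k-1}}\Vert$ for every subsequence and every $k$, although $(s_n)$ is basic. So the mechanism must be global, not step-by-step. Your fallback does not close the gap as it stands either: a basic subsequence of the differences gives no direct control of the partial sums $x_{n_k}$.

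Your ingredients do suffice if you keep $x^{**}$ inside the comparison and pay the loss only once. Let $\delta=d(x^{**},X)$. At step $n$, choose $f_1,\dots,f_k\in B_{X^*}$ that $(1-\varepsilon_n)$-norm $G_n=[x_1,\dots,x_{n-1},x^{**}]\subset X^{**}$, and then pick $x_n$ with $|f_j(x_n)-x^{**}(f_j)|<\eta_n$. Since $\Vert x_n-x^{**}\Vert\ge\delta$, splitting into the cases $|\lambda|\delta\ge 2\Vert h+\lambda x^{**}\Vert$ and $|\lambda|\delta<2\Vert h+\lambda x^{**}\Vert$ gives
\[
\Vert h+\lambda x_n\Vert\ge\theta_n\Vert h+\lambda x^{**}\Vert,\qquad \theta_n=1-\varepsilon_n-2\eta_n/\delta,\quad h\in G_n,\ \lambda\in\R .
\]
Apply this with $h=\sum_{i<k}a_ix_i+(\sum_{k<i\le n}a_i)x^{**}$ and $\lambda=a_k$, for $k=n,n-1,\dots,m+1$. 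This moves the vectors $x_k$ into $x^{**}$ one at a time and yields
\[
\Big\Vert\sum_{i\le n}a_ix_i\Big\Vert\ge\Big(\prod_{k=m+1}^{n}\theta_k\Big)\Big\Vert\sum_{i\le m}a_ix_i+\Big(\sum_{m<i\le n}a_i\Big)x^{**}\Big\Vert\ge\Big(\prod_{k\ge1}\theta_k\Big)\frac{\delta}{\delta+\Vert x^{**}\Vert}\Big\Vert\sum_{i\le m}a_ix_i\Big\Vert .
\]
Only the last inequality uses your two lower bounds for $\Vert g+\lambda x^{**}\Vert$. The Grunblum criterion then gives basicness, provided $\prod_k\theta_k>0$.

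This argument only needs $x^{**}$ to be a weak$^*$ cluster point of $S$ in $X^{**}\setminus X$. Such a point exists whenever $\overline{S}^w$ is not weakly compact, because the weak$^*$ closure of $S$ in $X^{**}$ is weak$^*$ compact. So Steps 2 and 3 collapse into a single argument that avoids both Rosenthal's $\ell_1$ theorem and Eberlein--\v{S}mulian. In the ($\Leftarrow$) direction you can likewise use a weak cluster point $x\ne0$ of $(x_n)$: it lies in $\bigcap_N[x_n:n\ge N]=\{0\}$, a contradiction.
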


Combining this selection principle with the two preceding lemmas yields a sequential
test for $L$-tauberian operators. Its basic-sequence formulations will be the main tool in the
perturbative argument.

\begin{theorem}{\label{thm: carac_L_taub_seq}}
    If $E$ is order continuous, for an operator $T: E \to Y$ the following assertions are equivalent:
    \begin{enumerate}[(i)]
        \item $T$ is $L$-tauberian.
        \item For every bounded sequence $(x_n) \on E$ such that $Tx_n$ converges, $(x_n)$ is equi-integrable.
         \item For every semi-normalized basic sequence $(x_n) \on E$ with no equi-integrable subsequences, $\limsup_{n \to \infty} \Vert Tx_n \Vert > 0$.
    \end{enumerate}
\end{theorem}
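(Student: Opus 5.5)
We prove the cycle (i)$\Rightarrow$(ii)$\Rightarrow$(iii)$\Rightarrow$(i). The first step is immediate: if $(Tx_n)$ converges, then $\{Tx_n\}$ is relatively compact, so the bounded set $\{x_n\}$ is equi-integrable by the definition of $L$-tauberian.

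For (ii)$\Rightarrow$(iii), argue by contraposition. Let $(x_n)$ be a semi-normalized basic sequence with no equi-integrable subsequence, and suppose $\Vert Tx_n\Vert\to0$. Then $Tx_n\to 0$ converges, so (ii) forces $(x_n)$ itself to be equi-integrable. This contradicts the hypothesis, and hence $\limsup_n\Vert Tx_n\Vert>0$.

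The substantial direction is (iii)$\Rightarrow$(i), again by contraposition. Suppose $C\subset E$ is bounded, $T(C)$ is relatively compact, and $C$ is not equi-integrable. Then some disjoint sequence in $\sol(C)$ stays away from $0$ in norm. Lifting its terms to points of $C$ dominating them in modulus gives a sequence $(x_n)\subset C$ that is not equi-integrable, and the same is true of every subsequence. Since $T(C)$ is relatively compact, we may pass to a subsequence with $(Tx_n)$ norm convergent.

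Apply Lemma~\ref{lemma: equiint_order_continuous} to obtain a subsequence $(x_{n_k})$ such that, with $z_k=x_{n_k}-x_{n_{k-1}}$, no subsequence of $(z_k)$ is equi-integrable. By construction in that lemma, $\Vert z_k\Vert\ge\Vert\tilde y_k\Vert\ge c/2$, and $(z_k)$ is bounded, so $(z_k)$ is semi-normalized. Moreover $Tz_k\to0$, because $(Tx_{n_k})$ is Cauchy.

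It remains to extract a basic subsequence from $(z_k)$, and this is the main obstacle. We use Theorem~\ref{thm: eber_smul} applied to $S=\{z_k\}$, which is bounded with $0\notin\overline S$. If $S$ has no basic subsequence, then $\overline S^w$ is weakly compact and $0\notin\overline S^w$. By Eberlein--Šmulian we may then pass to a subsequence $z_k\to z$ weakly with $z\neq0$.

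In this case, consider the terms $z_{k}-z$. By Lemma~\ref{lemma: equiint_order_continuous_conv}, after passing to a further subsequence, no subsequence of $(z_k-z)$ is equi-integrable. These terms are weakly null and bounded away from $0$, since a norm-null subsequence would be equi-integrable. By Bessaga--Pełczyński they therefore have a basic, semi-normalized subsequence.

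We must still check that $T(z_k-z)\to0$. Weak lower semicontinuity gives $Tz=\text{w-}\lim Tz_k$, and $Tz_k\to0$ in norm, so $Tz=0$. Hence $T(z_k-z)=Tz_k\to0$. The surviving subsequence is then a semi-normalized basic sequence with no equi-integrable subsequence and $\Vert T(\cdot)\Vert\to0$, contradicting (iii).

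In the other case, where $S$ already contains a basic subsequence $(z_{k_j})$, that subsequence is semi-normalized, has no equi-integrable subsequence, and satisfies $Tz_{k_j}\to0$. This again contradicts (iii). So in both cases $C$ must be equi-integrable, and $T$ is $L$-tauberian.
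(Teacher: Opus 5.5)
Your proof is correct, but it is organized differently from the paper's.

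\textbf{The paper's route.} The paper splits on whether the original set $\{x_n\}$ is relatively weakly compact.
\begin{itemize}
\item If it is, the paper subtracts the weak limit $x$ using Lemma~\ref{lemma: equiint_order_continuous_conv}. This gives a weakly null basic sequence with $\Vert T(x_n-x)\Vert\to0$.
\item If it is not, the paper normalizes by a functional so that $x^*(y_n)=1$, and uses Theorem~\ref{thm: eber_smul} to make $(y_n)$ basic. It then needs Lemma~\ref{lemma: equiint_order_continuous} together with the external result \cite[Lemma~2.44]{TauberianOperators} to ensure that the differences $y_{n_k}-y_{n_{k-1}}$ are again basic.
\end{itemize}

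\textbf{Your route.} You pass to differences first. This immediately gives a semi-normalized sequence $(z_k)$ with $Tz_k\to0$ and no equi-integrable subsequence. You then apply the dichotomy of Theorem~\ref{thm: eber_smul} to the differences themselves.
\begin{itemize}
\item Either $(z_k)$ has a basic subsequence, and you are done.
\item Or $(z_k)$ has a weakly convergent subsequence. You then run the paper's ``weakly compact'' mechanism on it: Lemma~\ref{lemma: equiint_order_continuous_conv} plus Bessaga--Pe\l czy\'nski. This works because the weak limit $z$ satisfies $Tz=0$.
\end{itemize}

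\textbf{Comparison.} Your version avoids the normalization step and the cited lemma on basicness of differences, so it is more self-contained. The price is that in the second case you chain both extraction lemmas, rather than using just one as the paper does in each of its cases.

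\textbf{Two small corrections.}
\begin{itemize}
\item $Tz$ equals the weak limit of $(Tz_k)$ because $T$ is weak-to-weak continuous; this has nothing to do with weak lower semicontinuity.
\item The fact that $z\neq0$ plays no role in your second case. The argument works for any weak limit, so that remark can be dropped.
\end{itemize}
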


\begin{proof}
    Clearly $(i) \Rightarrow (ii) \Rightarrow (iii)$, so it suffices to prove that $(iii) \Rightarrow (i)$ Suppose that $T$ is not $L$-tauberian, i.e. there is a bounded subset $C$ such that $T(C)$ is relatively compact and $C$ is not equi-integrable. Then, there is a bounded sequence $(x_n)$ such that $(Tx_n)$ converges and $(x_n)$ has no equi-integrable subsequence. Note that $(x_n)$ has no convergent subsequences either, and take $S=\{x_n :n\in\N\}$.

    First, suppose that $S$ is not relatively weakly compact. By the Eberlein–Šmulian theorem \cite[Theorem~1.6.3]{KaltonAlbiac}, we may pass to a subsequence and assume that $(x_n)$ has no weakly null subsequences, so there is a functional $x^* \in E^*$ such that $\lambda \coloneq \limsup_n x^*(x_n) > 0$. Thus, we may select a subsequence $(x_{n_k})$ of $(x_n)$ such that $ 0 < \lambda/2 < x^*(x_{n_k}) < 3\lambda/2$ and $\alpha_k \coloneq x^*(x_{n_k}) \to \lambda$ as $k \to \infty$. Therefore, if we consider the sequence $y_k = \alpha_k^{-1} x_{n_k}$, the set $S' = \{y_n :n\in\N\}$ is not relatively weakly compact, $(y_n)$ is a seminormalized sequence, $(y_n)$ has no equi-integrable subsequences, $(Ty_n)$ converges,  and there is $x^* \in E^*$ such that $x^*(y_n) = 1$ for every $n \geq 1$. Thus by Theorem~\ref{thm: eber_smul} we may assume that the sequence $(y_n)$ is basic, and by Lemma~\ref{lemma: equiint_order_continuous} and \cite[Lemma~2.44]{TauberianOperators}, we can pass to a subsequence $(y_{n_k})$ of $(y_n)$ such that $z_n = y_{n_k} - y_{n_{k-1}}$ is basic and has no equi-integrable subsequences. This implies that $(z_n)$ is seminormalized, and clearly $\Vert Tz_n \Vert \to 0$, which is a contradiction.

    Now, suppose that $S$ is relatively weakly compact, i.e. by passing to a subsequence we may assume that $x_{n} \overset{w}{\longrightarrow} x$. Then, by Lemma~\ref{lemma: equiint_order_continuous_conv} we may pass to a further subsequence such that $(x_n - x)$ has no equi-integrable subsequence. Clearly $(x_n-x)$ has no convergent subsequences, so by Theorem~\ref{thm: eber_smul} we can finally pass again to a subsequence such that $(x_n - x)$ is basic. Hence $(x_n - x)$ is a seminormalized weakly null basic sequence, and since $Tx_n$ converges, it follows that $\Vert T(x_n - x) \Vert \to 0$. This concludes the proof.
\end{proof}

For the dual statement we shall use the following weak$^*$ selection lemma, which produces a bounded biorthogonal family in the predual.

\begin{lemma}{\cite[Lemma~3.1.19]{TauberianOperators}}{\label{lemma: basic_weak^*_conv}} Let $(g_n)$ be a seminormalized sequence in a dual space $Y^*$ such that $0$ is a weak$^*$ cluster point of $\{g_n : n \in \N\}$. Then $(g_n)$ has a subsequence $(g_{n_k})$ for which there exists a bounded sequence $(y_k)$ in $Y$ so that $g_{n_i}(y_j) = \delta_{ij}$ for all $i, j \in \N$. 
\end{lemma}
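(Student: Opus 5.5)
The plan is to construct the subsequence $(g_{n_k})$ and a bounded sequence $(y_k)\subset Y$ by induction. At each step we impose exact biorthogonality in one direction (upper triangle) and only approximate biorthogonality in the other (lower triangle). We then correct the $y_k$ by inverting a small perturbation of the identity matrix. Fix $0<c\le\Vert g_n\Vert\le C$ for all $n$.

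\emph{Inductive step.} Suppose $g_{n_1},\dots,g_{n_{k-1}}$ and $y_1,\dots,y_{k-1}$ have been chosen, with $\Vert y_j\Vert\le 8/c$ and $g_{n_i}(y_j)=0$ for $i<j$. Put $G_k=[g_{n_1},\dots,g_{n_{k-1}}]$, a finite-dimensional subspace of $Y^*$. By compactness of $S_{G_k}$ there are finitely many $u_1,\dots,u_m\in B_Y$ such that $\max_l|h(u_l)|\ge \Vert h\Vert/2$ for every $h\in G_k$. Since $0$ is a weak$^*$ cluster point of $\{g_n\}$, and we impose only finitely many weak$^*$ conditions, we may choose $n_k>n_{k-1}$ with
\begin{equation*}
|g_{n_k}(u_l)|<c/8\ (l\le m),\qquad |g_{n_k}(y_j)|<\varepsilon\,2^{-k}\ (j<k),
\end{equation*}
where $\varepsilon>0$ is a small constant fixed in advance.

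\emph{Choice of $y_k$.} The first condition gives $d(g_{n_k},G_k)\ge c/4$. Indeed, if $\Vert g_{n_k}-h\Vert<c/4$ with $h\in G_k$, then $\Vert h\Vert>3c/4$, so $|h(u_l)|>3c/8$ for some $l$, and hence $|g_{n_k}(u_l)|>c/8$, a contradiction. Now $Z_k=\bigcap_{i<k}N(g_{n_i})$ is a finite-codimensional subspace of $Y$ with $Z_k^\perp=G_k$. Therefore $\Vert g_{n_k}|_{Z_k}\Vert=d(g_{n_k},G_k)\ge c/4$, and we can pick $y_k\in Z_k$ with $g_{n_k}(y_k)=1$ and $\Vert y_k\Vert\le 8/c$. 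This completes the induction.

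\emph{Correction step.} The matrix $A=(g_{n_i}(y_j))_{i,j}$ has the form $A=I+N$, where $N$ is strictly lower triangular with $|N_{ij}|<\varepsilon2^{-i}$. Any finite product of such factors is again strictly lower triangular, so the Neumann series $A^{-1}=\sum_r(-N)^r$ is meaningful entrywise; for $\varepsilon$ small its entries satisfy a bound of the form $|(A^{-1}-I)_{lj}|\le 2\varepsilon 2^{-l}$. Define
\begin{equation*}
y_j'=\sum_l (A^{-1})_{lj}\,y_l .
\end{equation*}
These series converge in norm, and $\Vert y_j'\Vert\le (1+2\varepsilon)8/c$. Moreover $g_{n_i}(y_j')=(AA^{-1})_{ij}=\delta_{ij}$, where the interchange of $g_{n_i}$ with the sum is justified by norm convergence.

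The main obstacle is the inductive step: because the weak$^*$ topology need not be metrizable, we cannot extract a weak$^*$-null subsequence. We must instead use only finitely many weak$^*$ conditions at each stage, and the finite norming set for $G_k$ is what converts these into a uniform lower bound on $d(g_{n_k},G_k)$. The correction step is routine bookkeeping.
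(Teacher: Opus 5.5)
Your proof is correct. The paper gives no argument of its own for this lemma: it is quoted from \cite{TauberianOperators} and used as a black box in the perturbative characterization of $M$-cotauberian operators. So there is no in-text proof to compare against, and yours is a complete, self-contained argument.

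The key steps all hold. The finite norming set for $G_k$ turns finitely many weak$^*$ smallness conditions into the uniform bound $d(g_{n_k},G_k)\ge c/4$. The matrix $(g_{n_i}(y_j))$ vanishes exactly above the diagonal and has summably small entries below it. The resulting unitriangular matrix is then inverted. Your entrywise estimate also checks out: $|(N^r)_{lj}|\le\varepsilon^r2^{-l}$, hence $|(A^{-1}-I)_{lj}|\le\frac{\varepsilon}{1-\varepsilon}2^{-l}\le2\varepsilon2^{-l}$ for $\varepsilon\le1/2$.

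Two points you leave implicit are harmless but worth stating:
\begin{enumerate}
\item The choice $n_k>n_{k-1}$ needs every weak$^*$-neighbourhood of $0$ to contain $g_n$ for infinitely many $n$. This holds because $g_n\neq0$ for all $n$ and the weak$^*$ topology is Hausdorff.
\item The identity $Z_k^\perp=G_k$ uses that finite-dimensional subspaces of $Y^*$ are weak$^*$-closed. After that, $\Vert g_{n_k}|_{Z_k}\Vert=d(g_{n_k},Z_k^\perp)$ is Hahn--Banach.
\end{enumerate}
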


We can now characterize both classes through compact perturbations. The form of the result deliberately parallels the perturbative description of $\DNS$ and $\coDNS$ operators recalled in Section~2 and developed in \cite{Caldera26}.

\begin{theorem}[Perturbative characterization for $LW_+$ and $MW_-$]\label{thm:pert_L-W+}    
    Let $E,F$ be Banach lattices such that $E, F^*$ are order continuous.
    \begin{enumerate}[(i)]
        \item An operator $T: E \to Y$ is $L$-tauberian if and only if $B_{N(T+K)}$ is equi-integrable for every compact operator $K: E \to Y$.
        \item An operator $S: X \to F$ is $M$-cotauberian if and only if $Q_{\overline{R(S+K)}}$ is $M$-weakly compact for every compact operator $K: X \to F$.
    \end{enumerate}       
\end{theorem}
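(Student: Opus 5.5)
Part (i) is proved in two directions, and part (ii) is then deduced from (i) by duality.

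\emph{Part (i), direct implication.} Suppose $T$ is $L$-tauberian and $K:E\to Y$ is compact. Then $T+K$ is again $L$-tauberian. Indeed, let $C\subset E$ be bounded with $(T+K)(C)$ relatively compact. Since $K(C)$ is relatively compact, so is $T(C)\subset (T+K)(C)-K(C)$, and hence $C$ is equi-integrable. Applying the definition to $C=B_{N(T+K)}$, whose image under $T+K$ is $\{0\}$, shows that $B_{N(T+K)}$ is equi-integrable.

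\emph{Part (i), converse.} We argue by contraposition using Theorem~\ref{thm: carac_L_taub_seq}. Suppose $T$ is not $L$-tauberian. Then there is a seminormalized basic sequence $(x_n)$ with no equi-integrable subsequence and $\Vert Tx_n\Vert\to0$. Passing to a subsequence, we may assume $\sum_n \Vert Tx_n\Vert\,\Vert x_n^*\Vert<\infty$, where $(x_n^*)$ are the biorthogonal functionals on $[x_n]$ extended by Hahn--Banach to $E$. Their norms are bounded by $2\,\mathrm{bc}(x_n)/\inf_n\Vert x_n\Vert$. Define
\[
K=-\sum_n x_n^*\otimes Tx_n .
\]
This series is absolutely convergent in operator norm with finite-rank terms, so $K$ is compact. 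Moreover $(T+K)x_m=Tx_m-Tx_m=0$ for every $m$, so $[x_n]\subset N(T+K)$. Since $(x_n)$ is bounded and has no equi-integrable subsequence, $B_{N(T+K)}$ is not equi-integrable. The only delicate point is the summability choice and the uniform bound on the extended functionals. This is routine given the reduction to basic sequences already made in Theorem~\ref{thm: carac_L_taub_seq}, which is the real work.

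\emph{Part (ii).} $S$ is $M$-cotauberian iff $S^*$ is $L$-tauberian. By (i) applied to $F^*$ (order continuous), this holds iff $B_{N(S^*+L)}$ is equi-integrable for every compact $L:F^*\to X^*$. The expected main obstacle is that a general compact $L$ need not be an adjoint, while the statement of (ii) only involves perturbations $K^*$ with $K:X\to F$ compact. For $L=K^*$ we have $N(S^*+K^*)=R(S+K)^\perp$, and by Corollary~\ref{cor: inc_quot} equi-integrability of $B_{R(S+K)^\perp}$ is equivalent to $Q_{\overline{R(S+K)}}$ being $M$-weakly compact. So it suffices to rerun the converse argument of (i) for $S^*$ and show that the compact perturbation can be taken weak$^*$-continuous.

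For this we use Lemma~\ref{lemma: basic_weak^*_conv}. Take $(g_n)\subset F^*$ seminormalized, with no equi-integrable subsequence and $\Vert S^*g_n\Vert\to0$. If $0$ is a weak$^*$ cluster point of $\{g_n\}$, the lemma gives a subsequence and a bounded sequence $(y_k)\subset F$ with $g_{n_i}(y_j)=\delta_{ij}$. After passing to a further subsequence we may assume $\sum_k\Vert S^*g_{n_k}\Vert<\infty$, and we define
\[
L=-\sum_k y_k\otimes S^*g_{n_k}.
\]
This $L$ is the adjoint of the compact operator $K=-\sum_k (S^*g_{n_k})\otimes y_k:X\to F$, and each $g_{n_k}$ lies in $N(S^*+K^*)$.

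To arrange that $0$ is a weak$^*$ cluster point, we replace $(g_n)$ by differences $g_{n_k}-g_{n_{k-1}}$ or by $g_n-g$, exactly as in the proof of Theorem~\ref{thm: carac_L_taub_seq}, using Lemmas~\ref{lemma: equiint_order_continuous} and \ref{lemma: equiint_order_continuous_conv}. We first extract a weak$^*$ convergent subnet, or in the non-relatively weakly compact case normalize via a functional as was done there. Making this weak$^*$ reduction work is the step I expect to be hardest, since weak$^*$ compactness yields nets rather than sequences. I would handle it by working with $0$ as a weak$^*$ cluster point of the set of differences rather than with limits.
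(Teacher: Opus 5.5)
Your part (i) and the direct half of (ii) coincide with the paper's argument. For the converse of (ii) you correctly see that the perturbation must be an adjoint $K^*$, and that Lemma~\ref{lemma: basic_weak^*_conv} is the tool that produces it.

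The gap is the step you leave open. You need a sequence that has $0$ as a weak$^*$ cluster point while keeping $\Vert S^*g_n\Vert\to0$ and non-equi-integrability. The template you propose to copy from Theorem~\ref{thm: carac_L_taub_seq} is the wrong one.
\begin{itemize}
\item Consecutive differences $g_{n_k}-g_{n_{k-1}}$ fail when $F$ is nonseparable. Then $B_{F^*}$ need not be weak$^*$ sequentially compact, and such differences need not accumulate weak$^*$ at $0$. For instance, $F=\ell_\infty$ is allowed, since its dual is an AL-space and hence order continuous. For $\delta_n\in\ell_\infty^*$ and any subsequence $(n_k)$, testing $\delta_{n_k}-\delta_{n_{k-1}}$ against $f\in\ell_\infty$ with $f(n_k)=(-1)^k$ gives values of modulus $2$.
\item A second case normalized by a functional is irrelevant here, because you already start from $\Vert S^*g_n\Vert\to0$.
\end{itemize}

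The missing idea is simply to subtract a weak$^*$ cluster point, and there is only one case.
\begin{itemize}
\item By Banach--Alaoglu, the sequence $(g_n)$ has a weak$^*$ cluster point $g$: every weak$^*$ neighbourhood of $g$ contains $g_n$ for infinitely many $n$.
\item Since $S^*$ is weak$^*$ continuous and $S^*g_n\to0$ in norm, $S^*g=0$. Hence $S^*(g_n-g)=S^*g_n\to0$, and $0$ is a weak$^*$ cluster point of $(g_n-g)$.
\item No subsequence of $(g_n-g)$ is equi-integrable, because $\{g\}$ is (Proposition~\ref{prop: prop_equi}(i),(iv),(vi)). In particular no subsequence is norm null, by Proposition~\ref{prop: prop_equi}(iii).
\item So after discarding finitely many terms, which does not affect the cluster point, $(g_n-g)$ is seminormalized.
\item Now apply Lemma~\ref{lemma: basic_weak^*_conv} \emph{first}, and thin out for summability only afterwards. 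Biorthogonality passes to subsequences, whereas the weak$^*$ cluster point property need not.
\end{itemize}
With $g_{n_k}-g$ in place of $g_{n_k}$, your operator $K$ works verbatim. This is exactly the paper's argument: it starts from $S^*x_n^*\to z^*$ and subtracts a weak$^*$ cluster point $x^*$.
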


\begin{proof}
    $(i)$ Suppose that $T:E \to Y$ is $L$-tauberian  and $K: E \to Y$ is a compact operator. Clearly $T|_{N(T+K)} = - K|_{N(T+K)}$, so $T(B_{N(T+K)})$ is relatively compact, and hence $B_{N(T+K)}$ is equi-integrable. 

    Now, suppose that $T$ is not $L$-tauberian. By Theorem~\ref{thm: carac_L_taub_seq} we know that there is a seminormalized basic sequence $(x_n)$ such that $\Vert Tx_n \Vert \to 0$ and $(x_n)$ contains no equi-integrable subsequence, and thus there exists a bounded sequence $(x^*_n)$ in $E^*$ such that $x^*_i(x_j)=\delta_{ij}$. By passing to a subsequence, we may assume that $\Vert Tx_n \Vert \leq 2^{-n}$ and $(x_n)$ is still not equi-integrable. Then, $Kx= -\sum_{n=1}^\infty x^*_{n}(x) Tx_n$ defines a compact operator $K:E\to Y$ with $(x_n)\subset N(T+K)$. Hence, $B_{N(T+K)}$ is not equi-integrable.

    $(ii)$ Suppose that $S^*$ is $L$-tauberian and $K: X \to F$ is a compact operator. Then $K^*$ is compact, and by the first part of the proof $B_{N(S^*+ K^*)} = B_{R(S+K)^\perp}$ is equi-integrable. Thus $Q_{\overline{R(S+K)}}$ is $M$-weakly compact by Corollary~\ref{cor: inc_quot}.



    Conversely, if $S: X \to F$ is not $M$-cotauberian (i.e. $S^*: F^* \to X^*$ is not $L$-tauberian) then there exists a sequence $(x_n^*)$ in $B_{F^*}$ such that $S^*x_n^* \to z^* \in X^*$ and $(x_n^*)$ has no equi-integrable subsequences. By the Banach-Alaoglu theorem, $B_{F^*}$ is weak$^*$-compact, so $\{x_n^*: n \in \N\}$ has a weak$^*$-cluster point $x^* \in F^*$.  Clearly  $S^*x^* = z^*$ and $0$ is a weak$^*$-cluster point of $\{y_n^*: n \in \N\}$, where $y_n^* \coloneq x_n^*-x^*$. Moreover, after discarding finitely many terms there is $c>0$ such that $\Vert y_n^*\Vert\geq c$ for every $n \geq 1$ (otherwise some subsequence of $(x_n^*)$ would converge in norm to $x^*$ and hence be equi-integrable by Proposition~\ref{prop: prop_equi}). Thus, applying Lemma~\ref{lemma: basic_weak^*_conv} we can find a bounded sequence $(y_n)$ in $F$ and a subsequence of $(y_n^*)$ (still denoted $(y_n^*)$) such that $y_i^*(y_j) = \delta_{ij}$ for every $i,j \geq 1$.

    By Lemma~\ref{lemma: equiint_order_continuous_conv} we may pass to a further subsequence such that $(y_n^*)$ is bounded away from $0$, $(y_n^*)$ is not equi-integrable, $\Vert y_n \Vert\Vert S^*y_n^* \Vert \leq 2^{-n}$ and $y_i^*(y_j) = \delta_{ij}$ for every $i,j \geq 1$. Then, $Kx= -\sum_{n=1}^\infty y^*_{n}(Sx) y_n$ defines a compact operator $K:X\to F$, and since $K^*z^*= -\sum_{n=1}^\infty z^*(y_n) S^*y_n^*$, it follows that $(y_n^*) \on N(S^*+ K^*)$. Thus, $B_{N(S^*+ K^*)} = B_{R(S+K)^\perp}$ is not equi-integrable, so $Q_{\overline{R(S+K)}}$ is not $M$-weakly compact by Corollary~\ref{cor: inc_quot}.
\end{proof}

\begin{theorem}\label{thm: L_taub_DNS_taub}
    Every $L$-tauberian operator is $\DNS$ and tauberian. Similarly, every $M$-cotauberian operator is $\coDNS$ and cotauberian.
\end{theorem}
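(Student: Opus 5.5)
To prove that every $L$-tauberian operator $T:E\to Y$ is $\DNS$ and tauberian, we use the characterizations already available; by Proposition~\ref{prop: non_trivial_L_taub}(i), $E$ is order continuous.

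For the $\DNS$ part, we use Theorem~\ref{thm: DNS_caract}(iv) together with Theorem~\ref{thm:pert_L-W+}(i). If $K:E\to Y$ is compact, then $B_{N(T+K)}$ is equi-integrable by Theorem~\ref{thm:pert_L-W+}(i). Hence $N(T+K)$ is dispersed by Proposition~\ref{prop: equi_ball_dispersed}, and $T$ is $\DNS$. One can also argue directly with (iii) of Theorem~\ref{thm: DNS_caract}. Suppose $(x_n)\in dn(E)$ with $\Vert Tx_n\Vert\to0$ along a subsequence. Then $(Tx_n)$ converges to $0$ along it, so that subsequence is equi-integrable. But a normalized disjoint sequence lies in its own solid hull and is not norm null, a contradiction.

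For the tauberian part, we use the characterization quoted in the proof of Proposition~\ref{prop: tauberian_L_space}: $T$ is tauberian if and only if every bounded $C\subset E$ with $T(C)$ relatively compact is relatively weakly compact. This is \cite[Corollary~2.2.5]{TauberianOperators}, or equivalently, every bounded sequence $(x_n)$ with $(Tx_n)$ convergent has a weakly convergent subsequence. If $T(C)$ is relatively compact, then $C$ is equi-integrable since $T$ is $L$-tauberian. Hence $C$ is relatively weakly compact by Proposition~\ref{prop: prop_equi}(ii), and $T$ is tauberian.

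For the dual statement, let $S:X\to F$ be $M$-cotauberian, so $S^*$ is $L$-tauberian. By the first part, $S^*$ is $\DNS$, which is precisely the definition of $S\in\coDNS(X,F)$. Also $S^*$ is tauberian, which is precisely the definition of $S$ being cotauberian.

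The only point requiring care is making sure the tauberian criterion via relatively compact images is correctly cited from \cite{TauberianOperators}, rather than the definition through $T^{**}$. That equivalence is classical and is already used in the paper, so no real obstacle is expected.
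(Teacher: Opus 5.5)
Your proof is correct and follows the paper's argument. You get order continuity of $E$, then use the perturbative characterization to make $B_{N(T+K)}$ equi-integrable, which gives dispersed kernels and hence $T\in\DNS$. For the tauberian part you use relative weak compactness of equi-integrable sets together with \cite[Corollary~2.2.5]{TauberianOperators}, and the $M$-cotauberian case follows by duality. Your alternative argument via condition (iii) of Theorem~\ref{thm: DNS_caract} is also valid and shorter. Applying the definition of $L$-tauberian to the set $\{x_{n_k}\}$ directly contradicts the fact that a normalized disjoint sequence lies in its own solid hull, so it needs neither order continuity nor the perturbative characterization.
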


\begin{proof}
Let $T:E\to Y$ be $L$-tauberian. Proposition~\ref{prop: non_trivial_L_taub} implies that $E$ is order continuous. By Theorem~\ref{thm:pert_L-W+}, $B_{N(T+K)}$ is equi-integrable for every compact operator $K:E\to Y$. Proposition~\ref{prop: equi_ball_dispersed} therefore shows that $N(T+K)$ is dispersed for every such $K$. The perturbative characterization for DNS operators (see Theorem~\ref{thm: DNS_caract}) now gives $T\in\DNS(E,Y)$.

Moreover, whenever $C\subset E$ is bounded and $T(C)$ is relatively compact, the set $C$ is equi-integrable and hence relatively weakly compact. The standard characterization of tauberian operators in \cite[Corollary~2.2.5]{TauberianOperators} then shows that $T$ is tauberian.

The last assertion follows easily by duality.
\end{proof}

\begin{corollary}\label{cor: L_space_taub_equivalence}
    If $E$ is an $L$-space, an operator $T:E\to Y$ is $L$-tauberian if and only if it is tauberian. Similarly, if $F$ is an $M$-space, an operator $T:X\to F$ is $M$-cotauberian if and only if it is cotauberian.
\end{corollary}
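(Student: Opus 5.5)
One direction of the first assertion is already available. If $E$ is an $L$-space and $T:E\to Y$ is tauberian, then Proposition~\ref{prop: tauberian_L_space} shows that $T$ is $L$-tauberian. Conversely, Theorem~\ref{thm: L_taub_DNS_taub} shows that every $L$-tauberian operator is tauberian, with no hypothesis on $E$. So on an $L$-space the two classes coincide, and the first assertion follows by combining these two results.

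For the second assertion I argue by duality. Let $F$ be an $M$-space and $T:X\to F$.

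If $T$ is cotauberian, then $T$ is $M$-cotauberian by the second half of Proposition~\ref{prop: tauberian_L_space}.

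Conversely, suppose $T$ is $M$-cotauberian, that is, $T^*:F^*\to X^*$ is $L$-tauberian. Theorem~\ref{thm: L_taub_DNS_taub} gives that $T^*$ is tauberian, which by definition means $T$ is cotauberian. Alternatively, I can cite the dual statement of that theorem directly: every $M$-cotauberian operator is cotauberian.

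The only point that needs care is that the definitions match exactly. $M$-cotauberian is defined as ``$T^*$ is $L$-tauberian'', and cotauberian as ``$T^*$ is tauberian''. Since $F^*$ is an $L$-space when $F$ is an $M$-space (\cite[Proposition~1.4.7]{Meyer-Nieberg:91}), the second assertion is just the first assertion applied to $T^*:F^*\to X^*$. I expect no real obstacle: the corollary is a direct combination of the two cited results.
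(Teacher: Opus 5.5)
Your proposal is correct and follows the paper's proof: Theorem~\ref{thm: L_taub_DNS_taub} gives that $L$-tauberian (resp.\ $M$-cotauberian) operators are tauberian (resp.\ cotauberian), and Proposition~\ref{prop: tauberian_L_space} gives the converse on $L$-spaces (resp.\ $M$-spaces). Your additional remark, that the second assertion is the first applied to $T^*:F^*\to X^*$ because $F^*$ is an $L$-space, is also valid.
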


\begin{proof}
    The direct implication follows from Theorem~\ref{thm: L_taub_DNS_taub}, and the converse is Proposition~\ref{prop: tauberian_L_space}.
\end{proof}

The preceding perturbative characterization has the following algebraic counterpart, relating $LW_+$ to $LW$ and $MW_-$ to $MW$.

\begin{proposition}[Algebraic characterization of $LW_+$ and $MW_-$]\label{prop:alg-char-L-taub} 
        Let $E,F$ be Banach lattices such that $E, F^*$ are order continuous.
    \begin{enumerate}[(i)]
        \item An operator  $T:E\to Y$ is $L$-tauberian if and only if $A:X\to E$ is $L$-weakly compact whenever $TA$ is compact.
        \item An operator $S:X\to F$ is $M$-cotauberian if and only if $B:F\to Z$ is $M$-weakly compact whenever $BS$ is compact.
    \end{enumerate}
\end{proposition}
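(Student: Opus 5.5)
For the direct implication in (i), I would assume $T\in LW_+(E,Y)$ and $TA$ compact for some $A:X\to E$. Then $T(A(B_X))=TA(B_X)$ is relatively compact, and $A(B_X)$ is bounded. The definition of $L$-tauberian then gives that $A(B_X)$ is equi-integrable, that is, $A\in LW(X,E)$.

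For the converse I would use the perturbative characterization, Theorem~\ref{thm:pert_L-W+}(i). Given a compact $K:E\to Y$, put $N=N(T+K)$ and $A=J_N:N\to E$. Then $TJ_N=-KJ_N$ is compact, so by hypothesis $J_N$ is $L$-weakly compact. By Corollary~\ref{cor: inc_quot}(i) this means $B_N$ is equi-integrable. Since $K$ was arbitrary, Theorem~\ref{thm:pert_L-W+}(i) shows that $T$ is $L$-tauberian.

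Alternatively, one could take $X=\ell_1$ and $A(\alpha_n)=\sum\alpha_n x_n$ for a bounded sequence $(x_n)$ with $(Tx_n)$ convergent. However, $TA$ need not then be compact unless $Tx_n\to0$, which is why I prefer the kernel route above.

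For (ii), the direct implication uses duality. Suppose $S\in MW_-(X,F)$ and $BS$ is compact. Then $S^*B^*=(BS)^*$ is compact by Schauder's theorem. Since $S^*$ is $L$-tauberian, applying part (i) to $S^*:F^*\to X^*$ (with $F^*$ order continuous) shows that $B^*:Z^*\to F^*$ is $L$-weakly compact. By Proposition~\ref{prop: prop_L-Mweak}(ii), $B$ is $M$-weakly compact.

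For the converse of (ii) I would again use Theorem~\ref{thm:pert_L-W+}(ii). Given a compact $K:X\to F$, set $R=\overline{R(S+K)}$ and $B=Q_R:F\to F/R$. Then $Q_R(S+K)=0$, so $Q_RS=-Q_RK$ is compact. The hypothesis then gives that $Q_R$ is $M$-weakly compact, and Theorem~\ref{thm:pert_L-W+}(ii) yields $S\in MW_-(X,F)$.

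The only delicate point is keeping the hypotheses aligned. In the direct implication of (ii), part (i) is applied to the lattice $F^*$, which is order continuous by assumption, so this is allowed. The direct implication of (i) needs no order continuity at all. I do not expect a genuine obstacle, since the perturbative theorem carries the real content.
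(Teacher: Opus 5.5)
Your proposal is correct and follows the paper's proof essentially step for step. The direct implications come from the definition of $L$-tauberian and from duality via Proposition~\ref{prop: prop_L-Mweak}, and the converses come from Theorem~\ref{thm:pert_L-W+} applied to the inclusion $J_{N(T+K)}$ and to the quotient map $Q_{\overline{R(S+K)}}$; the paper simply phrases those converses by contraposition.
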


\begin{proof}
    $(i)$ "$\Rightarrow$" If $TA$ is compact, then $TA(B_X)$ is relatively compact. Hence $A(B_X)$ is equi-integrable and so $A$ is $L$-weakly compact. 

    "$\Leftarrow$" If $T$ is not $L$-tauberian, we know by Theorem~\ref{thm:pert_L-W+} that there exists a compact operator $K:E\to Y$ such that $B_{N(T+K)}$ is not equi-integrable. Thus the inclusion $J_{N(T+K)}: N(T+K)\to E$ is not $L$-weakly compact, but $TJ_{N(T+K)}$ is compact. 

    $(ii)$ "$\Rightarrow$" Since $S^*$ is $L$-tauberian, if $BS$ is compact then so is $S^*B^*$. Thus, by $(i)$ we know that $B^*$ is $L$-weakly compact, hence $B$ is $M$-weakly compact by Proposition~\ref{prop: prop_L-Mweak}.

    "$\Leftarrow$" If $S$ is not $M$-cotauberian then by Theorem~\ref{thm:pert_L-W+} there exists a compact operator $K:X\to F$ such that $Q_{\overline{R(S+K)}}$ is not $M$-weakly compact. However $Q_{\overline{R(S+K)}} S = -Q_{\overline{R(S+K)}} K$ is compact.
\end{proof}

\begin{remark}

    $(i)$ The use of $L$-weak compactness in Proposition~\ref{prop:alg-char-L-taub} (i) is essential. Indeed, suppose one defined an operator $T:X\to Y$ to be $M$-tauberian if, for every Banach lattice $E$ and every operator $A:E\to X$, compactness of $TA$ implied that $A$ was $M$-weakly compact. Then no operator with nonzero domain is $M$-tauberian. To see this, choose $0\neq x\in X$ and consider the rank-one operator $A:\ell_1\to X$ given by $Ae_n=x$ for every $n\in\mathbb N$. Then $TA$ is compact, whereas $A$ is not $M$-weakly compact.

    $(ii)$ For projections, the characterization of $L$-tauberian operators can be strengthened. If $E$ is order continuous and $P:E\to E$ is a projection, then $P$ is $L$-tauberian if and only if every bounded subset $C\subset E$ such that $P(C)$ is equi-integrable is itself equi-integrable. This follows easily from the decomposition $C\subset P(C)+(I-P)(C)$ and the equi-integrability of $B_{N(P)}$. The corresponding statement fails for general operators; see Corollary~\ref{cor: equi_equi} below.
\end{remark}

We finish the abstract part of the theory by comparing an operator with its second adjoint. The first observation ensures that equi-integrability is preserved when a sublattice is viewed inside a larger lattice.

\begin{lemma}{\label{lemma: sublatt_equiint}}
    Let $F$ be a sublattice of $E$, and let $C$ be a bounded subset of $F$. If $C$ is equi-integrable on $E$, then $C$ is equi-integrable on $F$.
\end{lemma}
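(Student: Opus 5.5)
The plan is to unwind the definitions. The key point is that disjointness and solid hulls in $F$ are controlled by those in $E$, because $F$ is a sublattice.

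Let $(x_n)$ be a disjoint sequence in $\sol_F(C)$, the solid hull of $C$ computed in $F$. For each $n$ there is $y_n\in C$ with $|x_n|\leq |y_n|$. Since $F$ is a sublattice of $E$, the lattice operations of $F$ are the restrictions of those of $E$. Consequently:
\begin{itemize}
\item the modulus $|x_n|$ computed in $F$ coincides with the one computed in $E$;
\item the inequality $|x_n|\leq|y_n|$ holds in $E$ as well, since the order of $F$ is the order induced by $E$;
\item $|x_n|\wedge|x_m|=0$ in $F$ means the same element computed in $E$, so $(x_n)$ is disjoint in $E$.
\end{itemize}
Hence $(x_n)$ is a disjoint sequence contained in $\sol_E(C)$. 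Equi-integrability of $C$ in $E$ then gives $\Vert x_n\Vert\to0$. Here the norm on $F$ is the restriction of the norm of $E$, which is implicit in regarding $F$ as a (normed) sublattice.

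Since $(x_n)$ was an arbitrary disjoint sequence in $\sol_F(C)$, this is exactly the definition of $C$ being equi-integrable in $F$. Equivalently, the same inclusion $\sol_F(C)\subset\sol_E(C)$ together with the preservation of disjointness yields $\mathcal{NI}_F(C)\leq\mathcal{NI}_E(C)$.

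The only point needing care is that the solid hull and disjointness are computed with respect to $F$'s lattice structure. The sublattice hypothesis, that $F$ is closed under $\wedge$ and $\vee$ taken in $E$, is precisely what guarantees these agree with $E$'s. No order continuity or ideal property is needed: the solid hull in $F$ may be smaller than $\sol_E(C)\cap F$, but that only helps.
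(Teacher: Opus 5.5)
Your argument is correct. Because $F$ is a sublattice, its modulus, order and disjointness are those of $E$, so every disjoint sequence in $\sol_F(C)$ is a disjoint sequence in $\sol_E(C)$, and its norms tend to zero. The paper states this lemma without proof, treating it as immediate from the definitions, and your unwinding is exactly that verification. One small correction to your closing aside: for a genuine sublattice one has $\sol_F(C)=\sol_E(C)\cap F$ exactly, so the solid hull in $F$ cannot be smaller, though this does not affect the argument.
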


\begin{proposition}{\label{prop: second_conj_L_taub}}
    Let $E$ be a Banach lattice such that $E^{**}$ is order continuous, and $T: E \to Y$. If $T^{**}: E^{**} \to Y^{**}$ is $L$-tauberian, then $T$ is $L$-tauberian.
\end{proposition}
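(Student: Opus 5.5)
Let $C\subset E$ be bounded with $T(C)$ relatively compact in $Y$. View $E$ inside $E^{**}$ through the canonical embedding, and call the image $\hat C$. The aim is to show $\hat C$ is equi-integrable in $E^{**}$ and then pull this back to $E$ via Lemma~\ref{lemma: sublatt_equiint}.

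Since $T^{**}$ extends $T$, we have $T^{**}(\hat C)=T(C)$, viewed inside $Y\subset Y^{**}$. The canonical embedding $Y\to Y^{**}$ is an isometry, so $T(C)$ stays relatively compact in $Y^{**}$. By hypothesis $T^{**}$ is $L$-tauberian, hence $\hat C$ is equi-integrable in $E^{**}$.

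The canonical embedding $E\to E^{**}$ is a lattice isometry onto a (closed) sublattice of $E^{**}$; this is standard, see \cite{Meyer-Nieberg:91}. So $E$ may be regarded as a sublattice $F$ of the Banach lattice $E^{**}$, and $C$ as a bounded subset of $F$ that is equi-integrable in $E^{**}$. Lemma~\ref{lemma: sublatt_equiint} then says $C$ is equi-integrable in $F\cong E$. Its proof is short: a disjoint sequence in the solid hull of $C$ computed in $F$ is also disjoint and lies in the solid hull computed in $E^{**}$, because the lattice operations and the order agree on the sublattice. Since $C$ was arbitrary, $T$ is $L$-tauberian.

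The only delicate point is that the canonical embedding is a lattice homomorphism, i.e.\ that $|x|$ computed in $E^{**}$ coincides with the image of $|x|$ computed in $E$. This is the classical fact that $E$ is a sublattice of $E^{**}$, and it is what makes Lemma~\ref{lemma: sublatt_equiint} applicable. The assumption that $E^{**}$ is order continuous is not needed for the argument beyond the nontriviality already forced by Proposition~\ref{prop: non_trivial_L_taub}.
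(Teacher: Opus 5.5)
Your proof is correct and is essentially the paper's argument: the paper also deduces the result from Lemma~\ref{lemma: sublatt_equiint} and the fact that $E$ is a sublattice of $E^{**}$ via the canonical embedding, with $T^{**}J_E = J_Y T$ transferring relative compactness exactly as you do. Your side remark is also right: the order continuity of $E^{**}$ is never used, and it is automatic once $T^{**}$ is $L$-tauberian, by Proposition~\ref{prop: non_trivial_L_taub}.
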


\begin{proof}
    This is a consequence of Lemma~\ref{lemma: sublatt_equiint}, and the fact that $E$ is a sublattice of $E^{**}$ (cf. \cite[Proposition~1.4.5]{Meyer-Nieberg:91}).
\end{proof}

\begin{proposition}
    Suppose that $E^{**}$ is order continuous and $T: E \to Y$ is an operator with closed range. Then $T$ is $L$-tauberian if and only if $T^{**}$ is $L$-tauberian.
\end{proposition}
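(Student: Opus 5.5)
One implication is already available: if $T^{**}$ is $L$-tauberian, then Proposition~\ref{prop: second_conj_L_taub} gives that $T$ is $L$-tauberian. So the work is in the converse. Assume $T$ is $L$-tauberian with closed range. Since $E^{**}$ is order continuous, so is $E$, and Proposition~\ref{prop: L_tau_closed_range}(i) shows that $B_{N(T)}$ is equi-integrable in $E$. The range of $T^{**}$ is closed because $R(T)$ is. Applying Proposition~\ref{prop: L_tau_closed_range}(i) to $T^{**}:E^{**}\to Y^{**}$, it suffices to show that $B_{N(T^{**})}$ is equi-integrable in $E^{**}$.

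For a closed-range operator we have the standard identity $N(T^{**})=N(T)^{\perp\perp}$, which is the weak$^*$ closure of $N(T)$ in $E^{**}$ and is isometric to $N(T)^{**}$. By Proposition~\ref{prop: equi_ball_dispersed}, $N(T)$ is reflexive. Hence $N(T)^{\perp\perp}$ equals the canonical image of $N(T)$, and $B_{N(T^{**})}$ is exactly the image of $B_{N(T)}$ under the canonical embedding $E\to E^{**}$. The problem therefore reduces to a transfer statement: a bounded subset of $E$ that is equi-integrable in $E$ remains equi-integrable when viewed in $E^{**}$. This is the reverse of Lemma~\ref{lemma: sublatt_equiint}.

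This transfer is the main obstacle, because the solid hull in $E^{**}$ is larger than the solid hull in $E$. To handle it, I would use almost order boundedness, which is equivalent to equi-integrability in order continuous lattices (Remark after the definition, citing \cite[Proposition~3.6.2]{Meyer-Nieberg:91}).
\begin{itemize}
\item Since $E$ is order continuous and $C=B_{N(T)}$ is equi-integrable in $E$, for each $\varepsilon>0$ there is $u\in E_+$ with $C\subset[-u,u]+\varepsilon B_E$.
\item The embedding $E\to E^{**}$ is an isometric lattice homomorphism (\cite[Proposition~1.4.5]{Meyer-Nieberg:91}), so the same inclusion holds in $E^{**}$, with $[-u,u]$ now an order interval of $E^{**}$.
\item Since $E^{**}$ is order continuous, the same proposition shows that $C$ is equi-integrable in $E^{**}$.
\end{itemize}

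If one prefers to avoid that proposition, there is a direct argument. Take a disjoint sequence $(z_n)$ in $\sol(C)\subset E^{**}$. Each $z_n$ is dominated by $|x_n|\le u+|e_n|$ with $\|e_n\|\le\varepsilon$, and we decompose $z_n$ by the Riesz decomposition. The parts dominated by $u$ form a disjoint sequence in $[-u,u]$, so their norms tend to $0$ by order continuity of $E^{**}$. The remaining parts have norm at most $\varepsilon$. Hence $\mathcal{NI}(C)\le\varepsilon$ in $E^{**}$, and letting $\varepsilon\to0$ finishes the proof.
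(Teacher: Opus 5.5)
Your proof is correct. The reduction is the same as in the paper:
\begin{itemize}
\item the easy direction comes from Proposition~\ref{prop: second_conj_L_taub};
\item for the converse, Proposition~\ref{prop: L_tau_closed_range} applied to $T^{**}$, the reflexivity of $N(T)$ and the identity $N(T^{**})=N(T)^{\perp\perp}=J_E N(T)$ reduce everything to transferring equi-integrability of $B_{N(T)}$ from $E$ to $E^{**}$.
\end{itemize}

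You diverge from the paper only in that transfer step. The paper uses that an order continuous $E$ is an ideal in $E^{**}$. Hence any $y\in E^{**}$ with $|y|\leq |J_Ex|$ is of the form $J_Ez$ with $|z|\leq |x|$. So a disjoint sequence in $\sol(J_E B_{N(T)})\subset E^{**}$ is the image of a disjoint sequence in $\sol(B_{N(T)})\subset E$, and the transfer is immediate. In particular, the obstacle you anticipated, that the solid hull in $E^{**}$ is larger than the one in $E$, does not actually arise under these hypotheses.

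Your argument works instead through almost order boundedness, or equivalently your Riesz decomposition estimate. It never uses the ideal property, only that $J_E$ is an isometric lattice embedding and that $E^{**}$ is order continuous. It therefore proves something more general: equi-integrability passes from a Banach lattice to any order continuous Banach lattice containing it as a sublattice. That is a converse of Lemma~\ref{lemma: sublatt_equiint} whenever the larger lattice is order continuous.

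The paper's route is shorter, and for this step it needs only order continuity of $E$. Order continuity of $E^{**}$ enters there only to apply Proposition~\ref{prop: L_tau_closed_range} to $T^{**}$. Your route does not rely on the special position of $E$ inside its bidual.
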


\begin{proof}
    Suppose that $T$ is $L$-tauberian. Note that $R(T)$ is closed if and only if $R(T^{**})$ is closed, so by Proposition~\ref{prop: L_tau_closed_range} it suffices to prove that $B_{N(T^{**})}$ is equi-integrable. Since $B_{N(T)}$ is equi-integrable, by Proposition~\ref{prop: prop_equi} $B_{N(T)}$ is relatively weakly compact. Thus $N(T)$ is reflexive and $N(T^{**}) = N(T)^{\perp \perp} = J_E N(T)$, where $J_E: E \to E^{**}$ is the canonical embedding. Now suppose that $B_{N(T^{**})}$ is not equi-integrable, i.e. there is a disjoint sequence $(y_n) \on E^{**}$ and a sequence $(x_n) \on B_{N(T)}$ such that $|y_n| \leq |J_Ex_n|$ and $\Vert y_n \Vert \geq c > 0$ for every $n \geq 1$. Since $E$ is also order continuous, $E$ is an ideal in $E^{**}$ (cf. \cite[Theorem~1.17]{Wnuk}), and there is a disjoint sequence $(z_n) \on E$ such that $y_n = J_Ez_n$. Thus $|z_n| \leq |x_n|$, so $\Vert z_n \Vert \to 0$ since $B_{N(T)}$ is equi-integrable. However $J_E$ is an isometry, so $\Vert y_n \Vert \to 0$, which is a contradiction.
\end{proof}

\begin{question}
    Suppose that $E^{**}$ is order continuous (equivalently $E^{**}$ contains no copy of $c_0$, see \cite{Wnuk}) and let $T: E \to Y$ be $L$-tauberian. Is $T^{**}: E^{**} \to Y^{**}$ $L$-tauberian? 
\end{question}

In this direction, it was proved (see \cite[Proposition~4.6]{Manuel24}) that if $E$ contains no copy of $c_0$ and $T:E\to Y$ is $\DNS$, then $T^{**}:E^{**}\to Y^{**}$ is also $\DNS$. Consequently, if $E^{**}$ is order continuous and $T:E\to Y$ is $L$-tauberian, we know by Theorem~\ref{thm: L_taub_DNS_taub} that $T^{**}$ is at least $\DNS$.

\section{$LW_+$ operators and subspaces with equi-integrable unit ball in $L_p$}\label{sect:equi-int}

In this section we study the closed subspaces of $L_p$ which are dispersed. We show that the properties for $1\leq p <2$ are different from those for $2\leq p<\infty$. 
\medskip

\noindent\textbf{The case $1 \leq p < 2$.}
For $p<2$, equi-integrability of a bounded absolutely convex set can be detected by the finite-dimensional $\ell_p$ structures that the set contains. This is the key input for identifying $L$-tauberian and $\DNS$ operators. The implication $(i) \Rightarrow (iii)$ in Theorem \ref{prop: equi_p} is a consequence of Dor's disjointification theorem \cite[Theorem~B]{Dor75}, while $(ii)\Rightarrow(i)$ follows from a minor adaptation of the proof of \cite[Theorem~2]{Astashkin14}

\begin{theorem}{\cite[Lemma~1]{JohnsonSchechtman08}}{\label{prop: equi_p}}
    Let $1\leq p < 2$ and let $C$ be a bounded absolutely convex subset of $L_p$. The following are equivalent:
    \begin{enumerate}[(i)]
        \item $C$ is equi-integrable.
        \item $C$ contains no sequence equivalent to the canonical basis of $\ell_p$.
        \item For every $K > 0$ there is $n \in \N$ so that no length $n$ sequence in $C$ is $K$-equivalent to the unit vector basis of $\ell_p^n$.
    \end{enumerate}
\end{theorem}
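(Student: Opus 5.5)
We prove the cycle $(i)\Rightarrow(iii)\Rightarrow(ii)\Rightarrow(i)$. The middle implication is elementary. Each of the other two uses one of the external inputs cited just before the statement: Dor's disjointification theorem for $(i)\Rightarrow(iii)$, and an Astashkin-type subsequence splitting argument for $(ii)\Rightarrow(i)$.

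\emph{$(iii)\Rightarrow(ii)$.} Suppose $(x_n)\subset C$ is $K$-equivalent to the unit vector basis of $\ell_p$ for some $K$. Then every initial segment $(x_1,\dots,x_n)$ is $K$-equivalent to the unit vector basis of $\ell_p^n$. This contradicts (iii) for that $K$.

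\emph{$(i)\Rightarrow(iii)$.} Argue by contraposition. Assume there are $K>0$ and, for every $n$, vectors $f_1,\dots,f_n\in C$ that are $K$-equivalent to the unit vector basis of $\ell_p^n$. Dor's theorem (for $1\le p<2$) then yields disjoint measurable sets $A_1,\dots,A_n$ with $\Vert f_i\chi_{A_i}\Vert_p\geq c(K,p)>0$.
\begin{itemize}
\item Since $\Vert f_i\Vert\le\sup_{C}\Vert\cdot\Vert<\infty$ and the sets are disjoint, for $n$ large some $A_i$ has measure at most $1/n$.
\item For that $i$, $f_i\chi_{A_i}$ belongs to $\sol(C)$, lives on a set of arbitrarily small measure, and has norm at least $c$.
\item This violates condition (v) of Proposition~\ref{cor: equi_kothe}, so $C$ is not equi-integrable.
\end{itemize}
This step is routine once Dor's theorem is granted. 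The only point to check is that Dor's theorem applies with uniform constants for $p<2$, where the $\ell_p^n$ lower estimate is the relevant one.

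\emph{$(ii)\Rightarrow(i)$.} This is the main obstacle. Suppose $C$ is not equi-integrable.
\begin{itemize}
\item By Proposition~\ref{cor: equi_kothe}(vi), there are $\varepsilon>0$ and $f_n\in C$ with $\Vert f_n\chi_{\{|f_n|>n\}}\Vert_p\geq\varepsilon$.
\item Following the Kadec--Pełczyński/Astashkin splitting, write $f_n=g_n+h_n$ with $h_n=f_n\chi_{\{|f_n|>M_n\}}$. Here $\mu(\supp h_n)\to0$ and $\Vert h_n\Vert\ge\varepsilon$.
\item Pass to a subsequence so that the $h_n$ are almost disjoint. Then $(h_n)$ is equivalent to the $\ell_p$ basis, with an upper $\ell_p$ estimate from $p$-additivity of disjoint sums.
\item Using Komlós/Rosenthal-type selection, pass to a subsequence along which the bounded parts $g_n$ have controlled behaviour: after subtracting, they form a weakly null sequence in a reflexive-type piece with upper $\ell_2$ (hence, since $p<2$, dominated) estimates. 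Alternatively, work with differences $f_{n_{2k}}-f_{n_{2k+1}}$ to remove the weak limit, which stays inside $C$ up to the factor $2$ because $C$ is absolutely convex.
\item Because $p<2$, the $\ell_p$ norm of the coefficients dominates the contribution of the $g$-parts. Combined with a lower $\ell_p$ estimate on the disjoint $h$-parts, this makes suitable normalized differences in $C$ equivalent to the $\ell_p$ basis, contradicting (ii).
\end{itemize}
The delicate point is the lower estimate $\Vert\sum a_kf_{n_k}\Vert\gtrsim(\sum|a_k|^p)^{1/p}$. Here I would first try the pointwise restriction to the disjoint supports of the $h$-parts, showing that the cross terms coming from the $g$-parts on those small sets are negligible after a further subsequence.
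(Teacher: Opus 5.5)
Your (i)$\Rightarrow$(iii) and (iii)$\Rightarrow$(ii) follow the route the paper points to (Dor's theorem, then initial segments). One caveat: the Dor step needs ``$K$-equivalent'' to mean $K^{-1}\Vert a\Vert_p\le\Vert\sum_i a_if_i\Vert_p\le K\Vert a\Vert_p$, with no rescaling. Under the scale-invariant notion, (i)$\Rightarrow$(iii) fails: take $(x^{(n)}_i)_{i\le n}$ disjoint normalized and $\delta_n\to0$. The closed absolutely convex hull of $\{\delta_nx^{(n)}_i: i\le n,\ n\in\N\}$ is equi-integrable, yet it contains $\delta_n$-multiples of isometric copies of every $\ell_p^n$ basis. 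State that reading explicitly.

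\textbf{The gap is in (ii)$\Rightarrow$(i), exactly where you flag it.} Your lower estimate needs $\sup_{\Vert a\Vert_p\le1}\Vert\chi_B\sum_ka_kg_k\Vert_p$ to be small, where $B$ is the union of the small supports of the $h$-parts. For $1<p<2$ this does not follow from equi-integrability and weak nullity of $(g_k)$. Only the $\ell_1$-hull of an equi-integrable set is automatically equi-integrable, not the $\ell_p$-hull. For example, take consecutive blocks $I_k$ with $|I_k|=N_k\to\infty$, disjoint sets $A_k$ with $\mu(A_k)\to0$, and set $g_j=r_j+N_k^{1/p-1}\mu(A_k)^{-1/p}\chi_{A_k}$ for $j\in I_k$.
\begin{itemize}
\item $(g_j)$ is weakly null and equi-integrable, because the second summands have norm $N_k^{1/p-1}\to0$.
\item Yet the $\ell_p$-normalized block $N_k^{-1/p}\sum_{j\in I_k}g_j$ lies within $N_k^{1/2-1/p}\to0$ of the normalized function $\mu(A_k)^{-1/p}\chi_{A_k}$.
\end{itemize}
So ``negligible after a further subsequence'' needs a concrete selection principle, and the proposal supplies none. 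Only for $p=1$ does your triangle-inequality argument close.

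Two auxiliary claims are also wrong as stated. First, truncating at level $n$ does not make $(g_n)$ equi-integrable; you would need the Kadec--Pe\l{}czy\'nski subsequence splitting lemma. Second, weakly null equi-integrable sequences in $L_p$, $p<2$, need not satisfy upper $\ell_2$ estimates: i.i.d.\ symmetric $r$-stable variables with $p<r<2$ span $\ell_r$. An upper $\ell_p$ estimate is all you need anyway.

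\textbf{How to close it.} For $1<p<2$, get the lower estimate from a square function instead of pointwise restriction; this removes the cross terms entirely.
\begin{enumerate}
\item Choose $f_n\in C$ and sets $B_n$ with $\mu(B_n)\to0$ and $\Vert f_n\chi_{B_n}\Vert_p\ge\varepsilon$, and pass to a weakly convergent subsequence.
\item Using absolute convexity, form $d_k=(f_{m_{2k+1}}-f_{m_{2k}})/2\in C$, which is weakly null. Choose the indices so that each new $B_m$ carries little norm of every previously chosen function. This yields disjoint $A_k$ with $\Vert d_k\chi_{A_k}\Vert_p\ge c>0$.
\item Pass to an unconditional subsequence, by Bessaga--Pe\l{}czy\'nski against the Haar basis.
\item Unconditionality and Khintchine's inequality give $\Vert\sum_ka_kd_k\Vert_p\approx\Vert(\sum_k|a_kd_k|^2)^{1/2}\Vert_p$.
\item For the lower bound, discard everything off the disjoint sets: $\Vert(\sum_k|a_kd_k|^2)^{1/2}\Vert_p\ge\Vert\sum_k|a_k||d_k|\chi_{A_k}\Vert_p\ge c\Vert a\Vert_p$.
\item For the upper bound, use $(\sum_k|x_k|^2)^{1/2}\le(\sum_k|x_k|^p)^{1/p}$ for $p\le2$.
\end{enumerate}
Hence $(d_k)\subset C$ is equivalent to the $\ell_p$ basis, contradicting (ii). The paper does not spell this implication out; it defers (ii)$\Rightarrow$(i) to an adaptation of Astashkin's proof.
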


\begin{remark}
    The cited lemma is originally stated for $1\leq p\neq2<\infty$, with the authors asserting that the case $p>2$ follows from the results of Kadec--Pe\l{}czy\'nski \cite{Kadec}. However, this equivalence fails for $p>2$; see the explicit counterexample in Proposition~\ref{prop: count_disp_nonequi}.
\end{remark}

Applying this characterization to unit balls yields a criterion for dispersedness and, consequently, identifies $L$-tauberian operators on $L_p$ with $\DNS$ operators.

\begin{corollary}
    Let $1 \leq p < 2$. 
    \begin{enumerate}[(i)]
        \item A closed subspace $M \on L_p$ is dispersed if and only if $B_M$ is equi-integrable.
        \item An operator $T: L_p \to Y$ is $\DNS$ if and only if $T$ is $L$-tauberian.
    \end{enumerate}
    In particular, $LW_+(L_p, Y)$ is an open subset of  $\mathcal{B}(L_p, Y)$ for every Banach space $Y$.
\end{corollary}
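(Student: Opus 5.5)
For (i), the implication ``$B_M$ equi-integrable $\Rightarrow$ $M$ dispersed'' is exactly Proposition~\ref{prop: equi_ball_dispersed}. For the converse, I would argue by contraposition. Suppose $B_M$ is not equi-integrable. Since $B_M$ is bounded and absolutely convex, Theorem~\ref{prop: equi_p} yields a sequence $(f_n)\subset B_M$ equivalent to the unit vector basis of $\ell_p$. I then want to upgrade this to an almost disjoint normalized sequence. Here the natural tool is the Kadec--Pe\l{}czy\'nski dichotomy in $L_p$ (or Dor's disjointification already behind $(i)\Rightarrow(iii)$). A seminormalized sequence in $L_p$ either has a subsequence that is equi-integrable in the sense of the uniform integrability of $|f_n|^p$, or it has a subsequence that is a small perturbation of a disjoint sequence.

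The main point is to exclude the first alternative. If a subsequence $(f_{n_k})$ were equi-integrable, then $\{f_{n_k}\}$ would be relatively weakly compact. Passing to a weakly Cauchy subsequence, the normalized differences $g_k=(f_{n_{2k}}-f_{n_{2k-1}})/\Vert f_{n_{2k}}-f_{n_{2k-1}}\Vert$ would then be weakly null. They would still be equi-integrable, by Proposition~\ref{prop: prop_equi}. They would also still be equivalent to the $\ell_p$ basis, since they are blocks of an $\ell_p$ basis. For $p=1$ this is impossible, because $\ell_1$ bases cannot be weakly null. For $1<p<2$, an equi-integrable weakly null normalized sequence has, by Kadec--Pe\l{}czy\'nski, a subsequence equivalent to the $\ell_2$ basis, again contradicting $\ell_p$-equivalence. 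So a subsequence of $(f_n)$ satisfies $\Vert f_{n_k}-y_k\Vert\to0$ with $(y_k)$ disjoint and seminormalized. Normalizing gives $(y_k/\Vert y_k\Vert)\in dn(L_p)$ with $d(y_k/\Vert y_k\Vert,M)\to0$, so $M$ is not dispersed. I expect this step, making Kadec--Pe\l{}czy\'nski produce the disjoint perturbation in the non-equi-integrable case, to be the main technical obstacle. One alternative is to apply the dichotomy directly to $(f_n)$ in the form ``$\inf_k \Vert f_n\chi_{\{|f_n|>k\}}\Vert$ bounded below implies an almost disjoint subsequence'', using Proposition~\ref{cor: equi_kothe}(vi). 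That route is even shorter: if $B_M$ is not equi-integrable, choose $f_n\in B_M$ and $k_n\to\infty$ with $\Vert f_n\chi_{\{|f_n|>k_n\}}\Vert\geq c$. A standard gliding-hump argument on the shrinking sets $\{|f_n|>k_n\}$ then yields disjoint pieces. The difficulty is controlling the remaining parts $f_n\chi_{\{|f_n|\le k_n\}}$, and this is exactly where $p<2$ and the argument above are needed.

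For (ii), if $T$ is $L$-tauberian then $T$ is $\DNS$ by Theorem~\ref{thm: L_taub_DNS_taub}. Conversely, if $T$ is $\DNS$, then by Theorem~\ref{thm: DNS_caract}(iv) the kernel $N(T+K)$ is dispersed for every compact $K$. Hence, by part (i), $B_{N(T+K)}$ is equi-integrable for every compact $K$. Theorem~\ref{thm:pert_L-W+}(i), applicable since $L_p$ is order continuous, then gives that $T$ is $L$-tauberian.

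Finally, by (ii) we have $LW_+(L_p,Y)=\DNS(L_p,Y)$. The latter set is open by the last assertion of Theorem~\ref{thm: DNS_caract}, since $L_p$ is order continuous for $p<\infty$.
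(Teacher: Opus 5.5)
\textbf{There is a genuine gap in the converse direction of (i).} Your treatment of (ii), of the openness of $LW_+(L_p,Y)$, and of the implication ``$B_M$ equi-integrable $\Rightarrow$ $M$ dispersed'' coincides with the paper's. The problem is the implication ``dispersed $\Rightarrow$ equi-integrable''. The sequential dichotomy you invoke (a seminormalized sequence in $L_p$ has either an equi-integrable subsequence or an almost disjoint subsequence) is false. It fails exactly in the situation produced by Theorem~\ref{prop: equi_p}.

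Take $(x_n)\in dn(L_p)$ supported in $[0,1/2]$ and a copy $(r_n)$ of the Rademacher system with $|r_n|=1$ on $[1/2,1]$ and $r_n=0$ elsewhere, and put $f_n=x_n+r_n$.
\begin{enumerate}[(a)]
\item For $p<2$, Khintchine's inequality and $\Vert\cdot\Vert_{\ell_2}\leq\Vert\cdot\Vert_{\ell_p}$ show that $(f_n)$ is equivalent to the unit vector basis of $\ell_p$.
\item No subsequence is equi-integrable, because $\Vert f_n\chi_{\supp x_n}\Vert_p=1$ while $\mu(\supp x_n)\to0$.
\item No subsequence satisfies $\Vert f_{n_k}-y_k\Vert_p\to0$ with $(y_k)$ disjoint. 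Restricting to $[1/2,1]$ would force $r_{n_k}\to0$ in measure, which is impossible since $|r_{n_k}|=1$ there.
\end{enumerate}
So excluding the first alternative yields nothing. What Kadec--Pe\l{}czy\'nski actually provides is one of two things. The first is the subspace dichotomy, which is just Proposition~\ref{prop: dispersed_strongly_embedded} and says nothing about equi-integrability. The second is the subsequence splitting $f_{n_k}=g_k+h_k$ with $(g_k)$ equi-integrable and $(h_k)$ disjoint, in which $g_k$ need not be small.

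Your exclusion step is also wrong for $1<p<2$. Weakly null equi-integrable sequences have $\ell_2$ subsequences only when $p>2$; normalized i.i.d.\ $q$-stable variables with $p<q<2$ are equi-integrable, weakly null, and span $\ell_q$ in $L_p$. That step can be repaired by applying Theorem~\ref{prop: equi_p} to $\acon\{f_{n_k}\}$, but the conclusion still would not follow.

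\textbf{The missing idea is that the almost disjoint sequence in $M$ must be assembled from blocks, not subsequences, and this is where $p<2$ enters.} Start from the splitting above. Since no subsequence of $(f_n)$ is equi-integrable, you may assume $\Vert h_n\Vert_p\geq c>0$.
\begin{itemize}
\item Truncate $g_n$ at a level $K$ where the tails have norm at most $\eta$ (Proposition~\ref{cor: equi_kothe}(vi)). Averaging over signs $\varepsilon_n=\pm1$ and using $p\leq2$ twice, the average of $\Vert N^{-1/p}\sum_{n\in I}\varepsilon_ng_n\Vert_p$ over signs is at most $KN^{1/2-1/p}+\eta$ when $|I|=N$.
\item By disjointness, $\Vert N^{-1/p}\sum_{n\in I}\varepsilon_nh_n\Vert_p\geq c$ for every choice of signs.
\end{itemize}
Hence suitable signed averages over successive blocks are vectors of $M$ that are asymptotically equal to disjoint vectors bounded below, so $M$ is not dispersed. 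Your gliding-hump alternative stops exactly at this point.

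The paper avoids the construction by combining Proposition~\ref{prop: dispersed_strongly_embedded} and Theorem~\ref{prop: equi_p} with \cite[Proposition~3.4]{Manuel21}. The external ingredient is the fact that, for $1\leq p<2$, a subspace of $L_p$ containing a copy of $\ell_p$ is not strongly embedded, hence not dispersed.
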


\begin{proof}
    $(i)$ This follows from Proposition~\ref{prop: dispersed_strongly_embedded}, Theorem~\ref{prop: equi_p} and \cite[Proposition~3.4]{Manuel21}.

    $(ii)$ This is an immediate consequence of $(i)$ and the perturbative characterization of DNS operators (Theorem~\ref{thm: DNS_caract}) and $L$-tauberian operators (Theorem~\ref{thm:pert_L-W+}).

    Finally, since $LW_+(L_p, Y) = \DNS(L_p, Y)$,  Theorem~\ref{thm: DNS_caract} implies that $LW_+(L_p, Y)$ is an open subset of $\mathcal{B}(L_p, Y)$.
\end{proof}

The characterization of dispersed subspaces by equi-integrability of their unit balls also yields a stability result for one-sided gap perturbations. To simplify the proof, given an operator $T: L_p \to Y$ we will denote, as in \cite{Manuel21},
\begin{equation}
    \beta(T) \coloneq \inf\big\{ \liminf_{n\to \infty} \Vert Tx_n \Vert: (x_n)\in \text{dn}(L_p) \big\}.
\end{equation}

\begin{corollary}
    Let $1 \leq p < 2$ and $M, N$ be closed subspaces of $L_p$. If $M$ is dispersed and $\delta(N, M) < 1$, then $N$ is dispersed.
\end{corollary}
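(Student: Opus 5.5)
We argue by contradiction, reducing the statement to the equi-integrability characterization of dispersed subspaces from the preceding corollary. Set $\delta:=\delta(N,M)<1$. By part (i) of that corollary, $B_M$ is equi-integrable in $L_p$. We will show that $B_N$ is equi-integrable; Proposition~\ref{prop: equi_ball_dispersed} then gives that $N$ is dispersed.

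First we fix a number $\delta<\rho<1$. Every $x\in B_N$ satisfies $d(x,M)\leq\delta\Vert x\Vert<\rho$, so we can write $x=m+r$ with $m\in M$ and $\Vert r\Vert<\rho$. Then $\Vert m\Vert\leq 1+\rho\leq 2$, and hence
\begin{equation*}
B_N\subset 2B_M+\rho B_{L_p}.
\end{equation*}
By Remark~\ref{rem: sum_equi} and Proposition~\ref{prop: prop_equi}, this yields $\mathcal{NI}(B_N)\leq \mathcal{NI}(2B_M)+\mathcal{NI}(\rho B_{L_p})\leq 0+\rho<1$.

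This bound alone does not give $\mathcal{NI}(B_N)=0$, which is the main obstacle. Since $B_N$ is the unit ball of a subspace, however, $\mathcal{NI}(B_N)$ should take only the values $0$ or $1$. We plan to prove this dichotomy via $\beta$ and quotient maps, as follows. Suppose $B_N$ is not equi-integrable. Then, by Theorem~\ref{prop: equi_p} (or directly by Kadec--Pe\l{}czy\'nski/Dor in $L_p$, $p<2$), $N$ is not dispersed. So there are $x_n\in S_N$ and $(y_n)\in dn(L_p)$ with $\Vert x_n-y_n\Vert\to0$. Taking $z_n=|x_n|\wedge|y_n|\in\sol(B_N)$, which are disjoint with $\Vert z_n\Vert\to1$, we get $\mathcal{NI}(B_N)=1$. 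This contradicts $\mathcal{NI}(B_N)<1$.

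Equivalently, in the language of the paper, we can write the argument using $\beta$. Since $M$ is dispersed, $Q_M$ is $\DNS$. Then $\liminf\Vert Q_M y_n\Vert=\liminf d(y_n,M)\geq \liminf d(x_n,M)-\Vert x_n-y_n\Vert$. On the other hand, $d(x_n,M)\leq\delta$, which gives no contradiction by itself. So the route through $\mathcal{NI}$ above is the one we will use. The key point to check carefully is that $\sol(B_N)$ genuinely contains disjoint sequences of norm close to $1$ as soon as $N$ is non-dispersed, which is exactly the computation in the proof of Proposition~\ref{prop: equi_ball_dispersed}.
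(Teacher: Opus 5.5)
Your proof is correct, but it takes a different route from the paper's. The paper quotes $\beta(Q_M)=1$ from \cite[Proposition~3.12]{Manuel21}, i.e.\ $d(x_n,M)\to1$ for every $(x_n)\in dn(L_p)$. It then feeds this into Kato's gap inequality $(1+\delta(N,M))\,d(x,N)\geq d(x,M)-\delta(N,M)$, which gives the explicit bound $\liminf_n d(x_n,N)\geq(1-\delta)/(1+\delta)$ for every normalized disjoint sequence. Your argument has three steps. First, part (i) of the preceding corollary gives equi-integrability of $B_M$. Second, the inclusion $B_N\subset 2B_M+\rho B_{L_p}$ together with the subadditivity in Remark~\ref{rem: sum_equi} gives $\mathcal{NI}(B_N)\leq\rho<1$. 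Third, as in the proof of Proposition~\ref{prop: equi_ball_dispersed}, $\mathcal{NI}(B_N)=1$ whenever $N$ is not dispersed. Your version uses only tools developed in the paper and avoids both external citations, while the paper's version gives a quantitative estimate. In both arguments $p<2$ is used only to pass from ``$M$ dispersed'' to ``$B_M$ equi-integrable'' (resp.\ $\beta(Q_M)=1$); the rest holds in any Banach lattice.

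Two small remarks. First, the detour ``$B_N$ not equi-integrable $\Rightarrow$ $N$ not dispersed'' is unnecessary. Assume directly that $N$ is not dispersed; your $z_n$ computation then contradicts $\mathcal{NI}(B_N)\leq\rho$. If you keep the detour, cite part (i) of the preceding corollary rather than Theorem~\ref{prop: equi_p}, since the latter only gives an $\ell_p$-sequence in $B_N$, not an almost disjoint one. Second, in your discarded last paragraph the inequality runs the wrong way. The useful estimate is $d(y_n,M)\leq d(x_n,M)+\Vert x_n-y_n\Vert\leq\delta+o(1)$, which contradicts $\beta(Q_M)=1$ (though not merely $\beta(Q_M)>0$). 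That is essentially the paper's argument.
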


\begin{proof}
    Suppose that $M,N$ are closed subspaces of $L_p$ such that $M$ is dispersed and $\delta(N, M) < 1$. First, note that $\beta(Q_M) = 1$ by \cite[Proposition~3.12]{Manuel21}. Thus, for every normalized disjoint sequence $(x_n) \on L_p$, $\lim_{n \to \infty} d(x_n, M) = 1$. On the other hand, by \cite[Lemma IV.2.2]{Kato} we know that $(1+\delta(N,M))d(x,N)\geq d(x,M)-\delta(N,M)$ for every $x \in L_p$. Thus, it follows that 
    \begin{equation}
        \liminf_{n\to \infty} d(x_n, N) \geq \dfrac{\beta(Q_M)-\delta(N,M)}{ 1+ \delta(N,M)} = \dfrac{1-\delta(N,M)}{ 1+ \delta(N,M)} > 0
    \end{equation}
    for every normalized disjoint sequence $(x_n) \on L_p$. Hence $N$ is dispersed.
\end{proof}

\begin{remark}
    This result may be compared with the classical stability of several isomorphic properties under one-sided gap perturbations. If $\delta(N,M)<1$, then finite-dimensionality, superreflexivity, $B$-convexity, and the property of containing no copy of $\ell_1$ pass from $M$ to $N$. Separability, reflexivity, and quasireflexivity also pass from $M$ to $N$ under the stronger assumption $\delta(N,M)<1/2$; see \cite[Examples~2.3 and Corollary~2.11]{AAG:92}.
\end{remark}

For operators on $L_p$ the same equivalence admits a useful strengthening: relative compactness of the image may be replaced by equi-integrability.

\begin{corollary}{\label{cor: semi_L_p}}
    Let $1 \leq p < 2$ and $T: L_p \to L_p$. The following are equivalent:
    \begin{enumerate}[(i)]
        \item $T$ is $L$-tauberian.
        \item For every bounded subset $C$ of $L_p$ with $T(C)$ equi-integrable, $C$ is equi-integrable.
        \item For every bounded subset $C$ of $L_p$ such that $T(C)$ contains no sequence equivalent to the basis of $\ell_p$, then $C$ contains no sequence equivalent to the basis of $\ell_p$.
        \item $A: X \to L_p$ is $L$-weakly compact whenever $TA$ is $L$-weakly compact.
    \end{enumerate}
    In particular, if $T,S \in LW_+(L_p)$, then $TS\in LW_+(L_p)$.
\end{corollary}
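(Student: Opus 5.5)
The plan is to run the cycle (i)$\Rightarrow$(ii)$\Rightarrow$(iii)$\Rightarrow$(i), treat (iv) as the operator form of (ii), and then read off the composition claim.

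(ii)$\Leftrightarrow$(iii). The key tool is Theorem~\ref{prop: equi_p}. By Proposition~\ref{prop: prop_equi}, equi-integrability of a bounded set is equivalent to equi-integrability of its closed absolutely convex hull. Moreover, $T$ maps $\overline{\acon}(C)$ into $\overline{\acon}(T(C))$. So without loss of generality $C$ and $T(C)$ are absolutely convex, and Theorem~\ref{prop: equi_p} then translates ``equi-integrable'' into ``contains no sequence equivalent to the $\ell_p$ basis''. The only point needing care is that passing to the hull does not create new $\ell_p$ sequences. This is fine, because the statements in (iii) are applied to the hulls, which are legitimate bounded sets, and equi-integrability is preserved in both directions.

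(i)$\Rightarrow$(ii). This is the main step. Suppose $C$ is bounded, $T(C)$ is equi-integrable, and $C$ is not equi-integrable. Replace $C$ by its absolutely convex hull. By Theorem~\ref{prop: equi_p}, $C$ contains a sequence $(x_n)$ equivalent to the $\ell_p$ basis. I would then use Dor's disjointification (as in the proof of Theorem~\ref{prop: equi_p}) to pass to a subsequence and small perturbation, so that $x_n$ is close to a disjoint sequence $(d_n)$. Then $[x_n]$ is complemented and isomorphic to $\ell_p$, hence non-dispersed. On the other hand, $T(B_{[x_n]})\subset K\cdot\overline{\acon}(T(C))$ is equi-integrable, so $T|_{[x_n]}$ is an operator $\ell_p\to L_p$ whose image of the ball is equi-integrable. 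By Theorem~\ref{prop: equi_p} again, the image of the ball contains no $\ell_p$ sequence, so $T|_{[x_n]}$ fixes no copy of $\ell_p$. Since every infinite-dimensional subspace of $\ell_p$ contains a copy of $\ell_p$, $T|_{[x_n]}$ is strictly singular. As $(x_n)$ is a small perturbation of a disjoint sequence, $T|_{[d_n]}$ is strictly singular as well, up to a compact perturbation. Hence $T$ is not $\DNS$, and by Theorem~\ref{thm: L_taub_DNS_taub} it is not $L$-tauberian, a contradiction. I expect the perturbation step to be the main obstacle: one must ensure that $\sum\Vert x_n-d_n\Vert$ is small, so that strict singularity transfers, or else argue directly with Theorem~\ref{thm: DNS_caract}(iii) applied to $(d_n)$ normalized.

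(ii)$\Rightarrow$(i) and (iv). The implication (ii)$\Rightarrow$(i) is trivial, since relatively compact sets in $L_p$ are equi-integrable (Proposition~\ref{prop: prop_equi}). For (iv), apply (ii) to $C=A(B_X)$. Conversely, (iv) implies (i) by the algebraic characterization, Proposition~\ref{prop:alg-char-L-taub}, because compact operators into $L_p$ are $L$-weakly compact.

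Composition. If $T,S\in LW_+(L_p)$ and $C$ is bounded with $TS(C)$ relatively compact, then $S(C)$ is equi-integrable because $T$ is $L$-tauberian. Applying (ii) to $S$ then shows that $C$ is equi-integrable, so $TS\in LW_+(L_p)$.
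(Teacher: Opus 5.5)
Your proposal has a genuine gap, which shows up in two places when $1<p<2$. For $p=1$ the two claims below happen to hold.

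\textbf{The step (i)$\Rightarrow$(ii).} The inclusion $T(B_{[x_n]})\subset K\cdot\overline{\acon}(T(C))$ is false for $p>1$. Since $(x_n)$ is a basis of $[x_n]$, the set $\overline{\acon}\{x_n\}=\{\sum b_nx_n:\sum|b_n|\leq 1\}$ corresponds to $B_{\ell_1}$, not to $B_{\ell_p}$. For example, the vectors $N^{-1/p}\sum_{n\leq N}x_n$ are bounded but have $\ell_1$-coefficient norm $N^{1-1/p}\to\infty$.

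Nor can the conclusion be saved by another route: equi-integrability of $\{Tx_n\}$ does not make $T|_{[x_n]}$ strictly singular. Take $1/p+1/p'=1$, consecutive blocks $B_k\subset\N$ of lengths $N_k\to\infty$ (all large), and $(\psi_k)\in dn(L_p)$. Define $\ell_p\to L_p$ by $e_j\mapsto r_j+N_k^{-1/p'}\psi_k$ for $j\in B_k$. This operator sends the unit vectors to an equi-integrable set. But it maps the block averages $N_k^{-1/p}\sum_{j\in B_k}e_j$ to $\psi_k$ plus vectors of norm at most $N_k^{1/2-1/p}$, so it is an isomorphism on their span.

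What you can legitimately extract is weaker. Since $\acon\{Tx_n\}$ is equi-integrable, Theorem~\ref{prop: equi_p} shows that no tail $(Tx_n)_{n\geq n_0}$ is equivalent to the $\ell_p$ basis. So $T$ is bounded below on no tail of $[x_n]$, and $T|_{[x_n]}$ is not upper semi-Fredholm.

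The idea you are missing is that a $\DNS$ operator on $L_p$, $1\leq p<2$, is upper semi-Fredholm on the span of every $\ell_p$-sequence \cite[Proposition~3.8]{Manuel21}. Directly: suppose $T$ were compact on an infinite-dimensional $N\subset[x_n]$. Then $N$ contains a copy $N'$ of $\ell_p$. Its unit ball is not equi-integrable by Theorem~\ref{prop: equi_p}, so $N'$ is not dispersed for $p<2$. Take almost disjoint $u_n\in S_{N'}$ with $(Tu_n)$ convergent. The normalized differences of the disjoint approximants of $u_{2k}$ and $u_{2k+1}$ give $(w_k)\in dn(L_p)$ with $\Vert Tw_k\Vert\to 0$, contradicting Theorem~\ref{thm: DNS_caract}(iii). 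With this ingredient, Dor's disjointification and your perturbation step also become unnecessary.

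\textbf{The step (ii)$\Rightarrow$(iii).} Here the reduction to absolutely convex sets is not legitimate, precisely because hulls do create new $\ell_p$-sequences. Take $x\neq 0$, $(d_n)\in dn(L_p)$ and $p>1$. The set $D=\{x+d_n\}$ contains no $\ell_p$-sequence: the averages $N^{-1}\sum_{k\leq N}(x+d_{n_k})$ tend to $x\neq0$, while their coefficient vectors tend to $0$ in $\ell_p$. Yet $\acon(D)$ contains the $\ell_p$-sequence $\frac12(d_{2n}-d_{2n+1})$. Thus (ii) only tells you that $\acon T(C)$ contains an $\ell_p$-sequence, not that $T(C)$ does. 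Only (iii)$\Rightarrow$(ii) can be run through hulls.

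The paper closes both gaps with the same ingredient. It proves (i)$\Rightarrow$(iii) directly for arbitrary bounded $C$. If $(f_n)\subset C$ is an $\ell_p$-sequence, then $T$ is $\DNS$ by Theorem~\ref{thm: L_taub_DNS_taub}, so $T|_{[f_n]}$ is upper semi-Fredholm. Hence $T$ is bounded below on a tail, and $(Tf_n)_{n\geq n_0}\subset T(C)$ is again an $\ell_p$-sequence. It then proves (iii)$\Rightarrow$(ii) via hulls, and (ii)$\Rightarrow$(iv)$\Rightarrow$(i) just as you do. Your remaining steps, including (ii)$\Rightarrow$(i) and the composition argument, are correct.
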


\begin{proof}
    $(i) \Rightarrow (iii)$ Suppose that there exists a bounded subset $C$ of $L_p$ such that $T(C)$  contains no sequence equivalent to the basis of $\ell_p$, but there is a sequence $(f_n) \on C$ equivalent to the basis of $\ell_p$. Since $T$ is $\DNS$ by Theorem~\ref{thm: L_taub_DNS_taub}, we know by \cite[Proposition~3.8]{Manuel21} that $T|_{[f_n]}$ is upper semi-Fredholm. Hence $N(T|_{[f_n]})$ is finite-dimensional, and
    \cite[Lemma~2.2]{Manuel21} gives $n_0 \in \N$ such that $N(T|_{[f_n]})\cap[f_n:n\geq n_0]=\{0\}$. Since $N(T|_{[f_n]})+[f_n:n\geq n_0]$ and $R(T|_{[f_n]})$ are closed, $T$ is bounded below on $[f_n:n\geq n_0]$. Consequently, $(Tf_n)_{n\geq n_0}$
    is equivalent to the canonical basis of $\ell_p$, which yields a contradiction.

    $(iii) \Rightarrow (ii)$ Let $C$ be a bounded subset of $L_p$ such that $T(C)$ is equi-integrable. First, suppose that $C$ is absolutely convex. Then $T(C)$ is absolutely convex too, and hence $C$ is equi-integrable by Theorem~\ref{prop: equi_p}. Now, if $C$ is a general bounded subset of $L_p$, it is easy to check that $T(\acon(C)) = \acon(T(C)) = \con(T(C)\cup (-T(C)))$. Therefore, by Proposition~\ref{prop: prop_equi} it follows that $T(\acon(C))$ is equi-integrable, and hence, by the first case, $\acon(C)$ is equi-integrable. Since $C \on \acon(C)$, we conclude by Proposition~\ref{prop: prop_equi} that $C$ is equi-integrable too.
    
    $(ii) \Rightarrow (iv)$ If $TA$ is $L$-weakly compact, then $TA(B_X)$ is equi-integrable. Hence $A(B_X)$ is equi-integrable and $A$ is $L$-weakly compact.
    
    $(iv) \Rightarrow (i)$ This follows from the algebraic characterization of $L$-tauberian operators (Proposition~\ref{prop:alg-char-L-taub}) and the fact that every relatively compact subset in an order continuous Banach lattice is equi-integrable (see Proposition~\ref{prop: prop_equi}).

    The final assertion follows easily from $(ii)$.
\end{proof}

\noindent\textbf{The case $2 \leq p < \infty$.} 
The picture changes at $p=2$. We first separate equi-integrable unit balls from dispersed subspaces and then translate that separation into failures of openness and stability under composition.

The next result shows that being dispersed is strictly weaker than having equi-integrable unit ball for $p\geq2$.

\begin{proposition}{\label{prop: count_disp_nonequi}}
    Let $2 \leq p < \infty$. Then there is a dispersed subspace $M$ of $L_p$ such that $B_M$ is not equi-integrable.
\end{proposition}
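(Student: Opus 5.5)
We need a dispersed subspace $M$ of $L_p$, $p\geq 2$, whose unit ball is not equi-integrable. The plan is to take $M$ spanned by independent copies of Rademacher-type systems placed on disjoint pieces of $[0,1]$, so that each piece by itself is harmless, but the norm mass of unit vectors can concentrate on sets of small measure.

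\textbf{Construction.} Split $[0,1]$ into disjoint intervals $I_k$ with $\mu(I_k)=2^{-k}$. On each $I_k$ take an infinite Rademacher system $(r^{(k)}_j)_j$ rescaled to $I_k$ and normalized in $L_p$. Then choose finite blocks
\begin{equation*}
 g_k=\frac{1}{\sqrt{n_k}}\sum_{j=1}^{n_k} r^{(k)}_j ,
\end{equation*}
with $n_k\to\infty$. Put $M=[g_k]$, or more generously $M=\overline{\bigoplus_k [r^{(k)}_j:j]}$.

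\textbf{Failure of equi-integrability.} Normalize so that $\|g_k\|_p\approx 1$. Each $g_k$ is supported in $I_k$, and $\mu(I_k)\to 0$. Hence $\sup_{f\in B_M}\|f\chi_{I_k}\|\geq c>0$ along a sequence of sets of measure tending to $0$, and Proposition~\ref{cor: equi_kothe} shows $B_M$ is not equi-integrable. This part is immediate.

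\textbf{Dispersedness.} By Proposition~\ref{prop: dispersed_strongly_embedded} it is enough to show that $M$ is strongly embedded. So take $f_m\in M$ with $f_m\to 0$ in measure, and aim to show $\|f_m\|_p\to 0$. Write $f=\sum_k a_k g_k$. Since the $g_k$ are disjoint, $\|f\|_p^p=\sum_k|a_k|^p\|g_k\|_p^p$.

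The key point is that each $g_k$ is, by the CLT with $n_k$ large, close in distribution (on $I_k$) to a Gaussian. In particular $g_k$ is not concentrated on a small part of $I_k$: by Paley--Zygmund/Khintchine, $\mu(\{|g_k|>c\,2^{k/p}\}\cap I_k)\geq c'\mu(I_k)$.

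The difficulty is that the mass $\mu(I_k)$ is itself small, which seems to contradict strong embedding. A sequence like $f_m=g_m/\|g_m\|$ tends to $0$ in measure but has norm $1$, so $M$ is \emph{not} dispersed: the normalized disjoint sequence $g_m$ lies in $M$. The disjoint-pieces construction therefore fails, and this is the real obstacle. The concentrating vectors must not be disjoint, or even asymptotically disjoint.

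\textbf{Revised plan.} Use $p\geq 2$ via Rosenthal/Kadec--Pełczyński. For $p>2$, take $M$ spanned by independent mean-zero symmetric variables $h_k$ with $\|h_k\|_p=1$ and $\|h_k\|_2=w_k$, where $w_k\to 0$ and $\sum_k w_k^{2p/(p-2)}=\infty$. By Rosenthal's inequality,
\begin{equation*}
 [h_k]\cong X_{p,(w_k)},\qquad \Big\|\sum_k a_kh_k\Big\|_p\approx \max\Big\{\big(\textstyle\sum_k|a_k|^p\big)^{1/p},\ \big(\textstyle\sum_k|a_k|^2w_k^2\big)^{1/2}\Big\}.
\end{equation*}
The steps are then as follows.

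\begin{enumerate}
\item Non-equi-integrability. Choose $h_k$ with most of its $L_p$ mass on a set of measure $\to 0$ (for example a two-valued spike plus a small Gaussian part). Then $\sup_{f\in B_M}\|f\chi_{A_k}\|\not\to 0$ for sets $A_k$ with $\mu(A_k)\to 0$.
\item Dispersedness, via the Kadec--Pełczyński dichotomy. If $M$ contained an almost disjoint normalized sequence, it would contain an $\ell_p$-copy spanned by vectors with small $L_2$ norms relative to $L_p$. The condition $\sum_k w_k^{2p/(p-2)}=\infty$ is exactly Rosenthal's criterion that $X_{p,(w)}$ is not isomorphic to $\ell_p$. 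Moreover, block sequences $u_m$ with $\|u_m\|_2/\|u_m\|_p\to 0$ are excluded, because the $\ell_2$-part of the norm is spread out. Verifying this last claim is the main obstacle.
\item For $p=2$, equi-integrability is just uniform integrability of $|f|^2$, and I would instead take $M$ spanned by Gaussian-like independent blocks with slowly varying tails, checking dispersedness directly through strong embedding.
\end{enumerate}

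I expect step 2 to be the crux. The case $p=2$ may need a separate Orlicz-type argument.
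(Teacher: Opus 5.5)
\textbf{There is a genuine gap: the subspace in your revised plan is not dispersed.} It is the same obstacle that sank your first attempt, in another guise. In Step~2 you take independent $h_k$ with $\|h_k\|_p=1$ and $\|h_k\|_2=w_k\to0$. Then $\|h_k\|_1\le\|h_k\|_2\to0$, so $(h_k)$ is itself a normalized sequence in $M$ converging to $0$ in measure. Hence $M$ is not strongly embedded, and it is not dispersed by Proposition~\ref{prop: dispersed_strongly_embedded}. Equivalently, by Kadec--Pe\l czy\'nski some subsequence of $(h_k)$ is almost disjoint.

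The hypothesis $\sum_k w_k^{2p/(p-2)}=\infty$ cannot rescue this. It only prevents $X_{p,(w_k)}$ from being \emph{isomorphic} to $\ell_p$, not from \emph{containing} it. Whenever $w_k\to0$, choose $k_1<k_2<\cdots$ with $\sum_j w_{k_j}^{2p/(p-2)}<\infty$. H\"older's inequality with exponents $p/2$ and $p/(p-2)$, applied in Rosenthal's formula, then shows $(h_{k_j})$ is equivalent to the unit vector basis of $\ell_p$. For $p>2$ a subspace of $L_p$ containing $\ell_p$ is not isomorphic to $\ell_2$, so it is never dispersed. Consequently your claim that block sequences with $\|u_m\|_2/\|u_m\|_p\to0$ are excluded is false: take $u_m=h_m$. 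Step~1 itself (putting \emph{most} of the $L_p$ mass of $h_k$ on sets of vanishing measure) already forces $h_k\to0$ in measure, which destroys dispersedness. Finally, the case $p=2$ is only sketched.

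\textbf{The missing idea.} Every normalized vector of $M$ must keep a spread-out component with norm bounded below. At the same time, the generators should carry a \emph{fixed fraction}, not most, of their norm on sets of small measure.

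The paper does this with $M=[x_n+r_n]$. Here $(x_n)$ is a normalized disjoint sequence on $(1/2,1)$, with $A_n$ the support of $x_n$, and $(r_n)$ is the Rademacher system moved to $[0,1/2]$.
\begin{itemize}
\item \emph{Non-equi-integrability.} We have $\|x_n+r_n\|_p\le2$, while $\|(x_n+r_n)\chi_{A_n}\|_p=1$ and $\mu(A_n)\to0$, so $B_M$ is not equi-integrable.
\item \emph{Dispersedness.} Write $z=\sum_k a_k(x_k+r_k)=\tilde x+\tilde y$ with $\tilde x\in[x_k]$ and $\tilde y\in[r_k]$. Since $p\ge2$, Khintchine gives $\|\tilde x\|_p=\|a\|_{\ell_p}\le\|a\|_{\ell_2}\le C\|\tilde y\|_p$, so $\|\tilde y\|_p\ge(C+1)^{-1}\|z\|_p$. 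Also $|\tilde y|\le|z|$ pointwise, because the supports are disjoint. So a normalized sequence in $M$ tending to $0$ in measure would give a sequence in $[r_n]$ tending to $0$ in measure with norms bounded below. This contradicts the dispersedness of $[r_n]$ (Example~\ref{ex: Rademacher}).
\end{itemize}
This handles $p=2$ and $p>2$ together, with no CLT or Rosenthal inequality.

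Your independent-variable route could be repaired in the same spirit by keeping $\|h_k\|_2$ bounded below. For example, take $h_k=\sigma_k(1+\delta_k^{-1/p}\chi_{B_k})$, with $(\sigma_k)$ signs and $(B_k)$ events of measure $\delta_k\to0$, all jointly independent. As written, however, your plan works in exactly the wrong regime.
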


\begin{proof}
    Let $(x_n) \on L_p$ be a disjoint normalized sequence such that $\supp x_n \on (1/2,1)$ for every $n \geq 1$, and let $(r_n)$ be the Rademacher system. Since $L_p[0,1/2]$ is lattice isomorphic to $L_p$, we may assume that $\supp r_n \on [0,1/2]$ for every $n \geq 1$. Consider the subspace $M=[x_n+r_n]$. Since $\mu(\supp x_n) \to 0$, we have
    \begin{equation}
        \mathcal{NI}(B_M) \geq \limsup_{n \to \infty} \dfrac{\Vert (x_n + r_n)\chi_{\supp x_n}\Vert_p}{\Vert x_n + r_n\Vert_p} \geq \limsup_{n \to \infty} \dfrac{\Vert x_n\Vert_p}{\Vert x_n \Vert_p + \Vert r_n\Vert_p}  = \dfrac{1}{2} > 0,
    \end{equation}
    and thus $B_M$ is not equi-integrable. Now, suppose that $M$ fails to be dispersed, i.e. there is a sequence $(z_n) \on S_M$ and $(\tilde{z}_n) \in dn(L_p)$ such that $\Vert z_n - \tilde{z}_n \Vert \to 0$. In particular, $z_n \xrightarrow{\mu}0$, and 
    \begin{equation}
        z_n = \sum_{k =1}^{\infty} a_k^n (x_k+r_k)= \sum_{k =1}^{\infty} a_k^nx_k + \sum_{k =1}^{\infty} a_k^n r_k = \tilde{x}_n + \tilde{y}_n,
    \end{equation}
    where $(\tilde{y}_n) \in [r_n]$ and $(\tilde{x}_n) \in [x_n]$. Then, by Khintchine's inequality (cf. \cite[Theorem~6.2.3]{KaltonAlbiac}), there is a constant $C > 0$ such that
    \begin{equation}
        \Vert \tilde{x}_n \Vert_p = \bigg \Vert \sum_{k =1}^{\infty} a_k^nx_k \bigg \Vert_p = \bigg(\sum_{k =1}^{\infty} |a_k^n|^p \bigg)^{1/p} \leq \bigg(\sum_{k =1}^{\infty} |a_k^n|^2 \bigg)^{1/2} \leq C\bigg \Vert \sum_{k =1}^{\infty} a_k^nr_k \bigg \Vert_p = C\Vert \tilde{y}_n \Vert_p
    \end{equation}
    Hence $\Vert \tilde{y}_n \Vert_p \geq (C+1)^{-1}\Vert z_n \Vert_p = (C+1)^{-1}$. On the other hand, note that $|\tilde{y}_n| \leq |z_n|$ and thus $\tilde{y}_n \xrightarrow{\mu}0$. Since $L_p$ is order continuous, we conclude by \cite[Theorem~3.2]{Troitsky17} that $[r_n]$ contains an almost disjoint sequence. This yields a contradiction, since $[r_n]$ is a dispersed subspace (see Example~\ref{ex: Rademacher}).
\end{proof}


The distinction between dispersedness and having equi-integrable ball has a further consequence: the set of $L$-tauberian operators can be non-open. The following result relies essentially on \cite[Proposition~3]{Astashkin23}.

\begin{proposition}{\label{prop: sub_ex_open}}
    Let $2 \leq p < \infty$, $\lambda \geq 0$. Consider an arbitrary sequence $(x_n) \in dn(L_p)$ with $\supp(x_n) \on [0,1/2]$, and a copy of the rademacher system $(r_n)$ with $\supp(r_n) \on [1/2, 1]$. Take the sequence $z_n = r_n + \lambda x_n$, and the closed subspace $M_{\lambda} = [z_n]$. Then,
    \begin{enumerate}[(i)]
        \item $B_{M_\lambda}$ is equi-integrable if and only if $\lambda = 0$.
        \item $M_{\lambda}$ is isomorphic to $\ell_2$ and complemented.
        \item $g(M_{\lambda}, M_{0}) \leq \lambda$.
    \end{enumerate}
\end{proposition}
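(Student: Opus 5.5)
I would prove the three items separately, all by writing a general element of $M_\lambda$ as $f=\sum_n a_n z_n = \sum_n a_n r_n + \lambda\sum_n a_n x_n$. The two pieces have disjoint supports, lying in $[1/2,1]$ and $[0,1/2]$ respectively. Hence
$$
\|f\|_p^p=\Big\|\sum a_n r_n\Big\|_p^p+\lambda^p\Big(\sum|a_n|^p\Big).
$$
By Khintchine's inequality (rescaled to $[1/2,1]$), $\|\sum a_n r_n\|_p\simeq (\sum |a_n|^2)^{1/2}$. Since $p\ge 2$, we also have $(\sum|a_n|^p)^{1/p}\le(\sum|a_n|^2)^{1/2}$. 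It follows that
$$
c_p\Big(\sum|a_n|^2\Big)^{1/2}\le \|f\|_p\le (C_p+\lambda)\Big(\sum|a_n|^2\Big)^{1/2}.
$$

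\textbf{Item (ii).} The two-sided estimate above already shows that $(z_n)$ is equivalent to the unit vector basis of $\ell_2$, so $M_\lambda\simeq\ell_2$. For complementation I would use that $[r_n]$ is complemented in $L_p$ for $1<p<\infty$, via the Rademacher projection $R g=\sum_n\big(\int g r_n\big) r_n$ (normalized on $[1/2,1]$). I would then define
$$
P g=\sum_n \Big(\int g\, r_n\Big) z_n ,
$$
where $r_n$ is taken with the appropriate normalization on $[1/2,1]$ so that $\int z_m r_n=\delta_{mn}$. The coefficient map $g\mapsto(\int g r_n)_n$ is bounded into $\ell_2$, since it equals $R$ composed with the Khintchine isomorphism $[r_n]\to\ell_2$. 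Also $(a_n)\mapsto\sum a_n z_n$ is bounded from $\ell_2$ by the upper estimate. So $P$ is a bounded projection onto $M_\lambda$; it is idempotent because $\int z_m r_n=\int r_m r_n$, as $x_m$ vanishes on $[1/2,1]$.

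\textbf{Item (iii).} Given $f\in S_{M_\lambda}$, I compare it with $h=\sum a_n r_n\in M_0$. Then $\|f-h\|=\lambda(\sum|a_n|^p)^{1/p}$. Since $\|f\|_p\ge \|h\|_p$ by disjointness, and Khintchine's lower bound on the Rademachers is $\ge(\sum|a_n|^2)^{1/2}$ up to the constant (equal to $1$ for $p\ge2$ with the right normalization, since $\|\cdot\|_p\ge\|\cdot\|_2$ on a probability space), this gives $d(f,M_0)\le\lambda$. Conversely, for $h\in S_{M_0}$ the element $f=\sum a_n z_n$ satisfies $\|f-h\|\le\lambda\|h\|$. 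Thus $\delta(M_0,M_\lambda)\le\lambda$. The main obstacle is getting the constant exactly $\lambda$ rather than $C\lambda$. This requires normalizing the Rademacher copy so that $\|\sum a_n r_n\|_2=(\sum|a_n|^2)^{1/2}$ and then using $\|\cdot\|_p\ge\|\cdot\|_2$ after renormalizing the measure on $[1/2,1]$. I would handle this by taking $r_n$ as a rescaled copy with $\|r_n\|_2$ normalized, checking that the cited result \cite[Proposition~3]{Astashkin23} allows this, or else accepting a harmless constant.

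\textbf{Item (i).} For $\lambda=0$ the claim is Example~\ref{ex: Rademacher}, transported to $L_p[1/2,1]$. For $\lambda>0$ I would copy the computation in Proposition~\ref{prop: count_disp_nonequi}. Since $\mu(\supp x_n)\to0$ and $z_n\chi_{\supp x_n}\supseteq\lambda x_n$,
$$
\|z_n\chi_{\supp x_n}\|_p/\|z_n\|_p\ge \lambda/(1+\lambda)>0 .
$$
By Proposition~\ref{cor: equi_kothe}(iv), $B_{M_\lambda}$ is therefore not equi-integrable.
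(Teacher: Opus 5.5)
Your argument is correct. For (i), (iii) and the $\ell_2$-isomorphism in (ii) it is essentially the paper's argument. Your exact identity $\|f\|_p^p=\|\sum a_nr_n\|_p^p+\lambda^p\|(a_n)\|_{\ell_p}^p$ gives the lower estimate $\|f\|_p\ge\|\sum a_nr_n\|_p$ more cleanly than the paper's detour through $L_2$.

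The genuine difference is complementation. The paper quotes Kadec--Pe\l{}czy\'nski \cite[Theorem~6.4.8]{KaltonAlbiac} for $p>2$ and uses the Hilbert structure for $p=2$. You instead write down $P_\lambda g=\sum_n\bigl(\int g\rho_n\bigr)z_n$ with $\rho_n=r_n/\|r_n\|_2^2$. The citation is shorter and covers any subspace isomorphic to $\ell_2$. Your construction is elementary, uniform in $2\le p<\infty$, and yields more:
\begin{itemize}
\item $N(P_\lambda)=\{g:\int gr_n=0\ \text{for all } n\}$ does not depend on $\lambda$;
\item $\|(P_\lambda-P_0)g\|_p=\lambda\|(\int g\rho_n)_n\|_{\ell_p}\le\lambda\|(\int g\rho_n)_n\|_{\ell_2}$, so $\|P_\lambda-P_0\|=O(\lambda)$ by Bessel's inequality.
\end{itemize}
This is exactly the common complement and norm convergence of projections that the paper later obtains in Corollary~\ref{cor: seq_not_open} from Lemmas~\ref{lemma: proj_gap} and~\ref{lemma: conv_rang_kern}.

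On the constant in (iii), your fallback of ``accepting a harmless constant'' would not prove the statement. Item (iii) really depends on how the copy is normalized. Take the naive copy $r_n(t)=\tilde r_n(2t-1)$ on $[1/2,1]$, with $\tilde r_n$ the standard Rademacher functions. Then $d(z_1/\|z_1\|_p,M_0)=\lambda(1/2+\lambda^p)^{-1/p}>\lambda$ whenever $0<\lambda<2^{-1/p}$.

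The paper tacitly uses the lattice-isometric copy $r_n(t)=2^{1/p}\tilde r_n(2t-1)$, so that $\|r_n\|_p=1$, as in its computation for (i). With that copy, $\|\sum a_nr_n\|_p=\|\sum a_n\tilde r_n\|_{L_p[0,1]}\ge\|(a_n)\|_{\ell_2}$ immediately, and no appeal to \cite{Astashkin23} is needed. Your choice $|r_n|=\sqrt2$ also works for (iii). With it, the ratio in (i) becomes $\lambda/(2^{1/2-1/p}+\lambda)$ rather than $\lambda/(1+\lambda)$; this is still positive, so nothing breaks.
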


\begin{proof}
    $(i)$ The sufficiency follows from Example~\ref{ex: Rademacher}. Now, suppose that $\lambda > 0$, and note that for every $n \geq 1$,
    \begin{equation}
         \dfrac{\Vert z_n \chi_{\supp(x_n)} \Vert_p}{\Vert z_n \Vert_p} = \dfrac{\lambda\Vert x_n \Vert_p}{(\Vert r_n \Vert_p^p + \lambda^p\Vert x_n \Vert_p^p)^{1/p}} = \dfrac{\lambda}{(1+\lambda^p)^{1/p}}.
    \end{equation}
    Since $\mu(\supp(x_n)) \to 0$, we conclude by Proposition~\ref{cor: equi_kothe} that $B_{M_{\lambda}}$ is not equi-integrable.

    $(ii)$ Consider an arbitrary $\sum_{n=1}^{\infty} a_n z_n \in M_{\lambda}$. Then, by Khintchine's inequality,
    \begin{equation}{\label{ec: ineq_con_rad}}
        \begin{aligned}
            \bigg \Vert \sum_{n=1}^{\infty} a_n z_n \bigg \Vert_p & = \bigg \Vert \sum_{n=1}^{\infty} a_n r_n + \lambda\sum_{n=1}^{\infty} a_n x_n \bigg \Vert_p  \leq \bigg \Vert \sum_{n=1}^{\infty} a_n r_n  \bigg \Vert_p + \lambda\bigg \Vert\sum_{n=1}^{\infty} a_n x_n \bigg \Vert_p  \\
            & \leq C \bigg( \sum_{n=1}^{\infty} |a_n|^2 \bigg)^{1/2} + \lambda \bigg( \sum_{n=1}^{\infty} |a_n|^p\bigg)^{1/p}  \leq (C + \lambda) \bigg( \sum_{n=1}^{\infty} |a_n|^2 \bigg)^{1/2}  \\
            & \leq (C + \lambda)\bigg \Vert \bigg|\sum_{n=1}^{\infty} a_n r_n \bigg|  \bigg \Vert_2 \leq (C + \lambda)\bigg \Vert \bigg|\sum_{n=1}^{\infty} a_n r_n \bigg| + \lambda\bigg|\sum_{n=1}^{\infty} a_n x_n \bigg| \bigg \Vert_2 \\
            & = (C + \lambda)\bigg \Vert \sum_{n=1}^{\infty} a_n z_n \bigg \Vert_2
        \end{aligned}
    \end{equation}
    Since the converse inequality always holds, we conclude that $M_{\lambda}$ is isomorphic to $\ell_2$. In the $p=2$ case $M_{\lambda}$ is trivially complemented, and for $p > 2$, this follows from \cite[Theorem~6.4.8]{KaltonAlbiac} and the fact that $M_{\lambda}$ is isomorphic to $\ell_2$.

    $(iii)$ Consider an arbitrary $(a_n) \in \ell_2$. Then
    \begin{equation}
        \bigg\Vert \sum_{n=1}^{\infty} a_n z_n  - \sum_{n=1}^{\infty} a_n r_n  \bigg\Vert_p  =  \lambda\bigg \Vert \sum_{n=1}^{\infty} a_n x_n \bigg \Vert_p = \lambda \Vert (a_n) \Vert_{\ell_p},
    \end{equation}
    Moreover, applying Khintchine's inequality and arguing as in \eqref{ec: ineq_con_rad} we have
    \begin{equation}
        \Vert (a_n) \Vert_{\ell_2} \leq \bigg\Vert \sum_{n=1}^{\infty} a_n z_n \bigg \Vert_p \leq (C + \lambda) \Vert (a_n) \Vert_{\ell_2} \textrm{ and }
        \Vert (a_n) \Vert_{\ell_2} \leq \bigg\Vert \sum_{n=1}^{\infty} a_n r_n \bigg \Vert_p \leq C \Vert (a_n) \Vert_{\ell_2}.
    \end{equation}
    Now, let $x = \sum_{n=1}^{\infty} a_n z_n \in S_{M_{\lambda}}$, and consider $y \coloneq \sum_{n=1}^{\infty} a_n r_n \in M_{0}$. Then
    \begin{equation}
        d(x, M_0) \leq \bigg\Vert x - \dfrac{y}{\Vert x \Vert_p} \bigg\Vert_p = \bigg\Vert \dfrac{x}{\Vert x \Vert_p} - \dfrac{y}{\Vert x \Vert_p} \bigg\Vert_p = \dfrac{\Vert x - y \Vert_p}{\Vert x \Vert_p} \leq \lambda \dfrac{\Vert (a_n) \Vert_{\ell_p}}{\Vert (a_n) \Vert_{\ell_2}} \leq \lambda
    \end{equation}
    Taking the supremum over all possible $x \in S_{M_{\lambda}}$, it follows that $\delta(M_{\lambda}, M_0) \leq \lambda$. Similarly, it is easy to see that $\delta(M_{0}, M_{\lambda}) \leq \lambda$, and hence $g(M_{\lambda}, M_0) \leq \lambda$.
\end{proof}

\begin{corollary}{\label{cor: seq_sub_not_open}}
    Let $2 \leq p < \infty$. There is a sequence of complemented closed subspaces $(M_n)$ of $L_p$ with $B_{M_n}$ not equi-integrable, and a closed subspace $M_0$ of $L_p$ with $B_{M_0}$ equi-integrable, such that $g(M_n,M_0) \to 0$.
\end{corollary}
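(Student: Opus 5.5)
The plan is to read this off Proposition~\ref{prop: sub_ex_open} directly. Fix $2\leq p<\infty$, a sequence $(x_k)\in dn(L_p)$ with $\supp x_k\subset[0,1/2]$, and a copy $(r_k)$ of the Rademacher system supported in $[1/2,1]$. As there, define $M_\lambda=[r_k+\lambda x_k]$. Take $M_0=[r_k]$ and $M_n=M_{1/n}$ for $n\geq1$.

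First, $B_{M_0}$ is equi-integrable. This is Proposition~\ref{prop: sub_ex_open}(i) with $\lambda=0$, or equivalently Example~\ref{ex: Rademacher}, transported to $L_p[1/2,1]$ via the lattice isometry onto $L_p$ and the fact that equi-integrability passes from the band to $L_p$. Second, for each $n\geq1$ we have $\lambda=1/n>0$, so Proposition~\ref{prop: sub_ex_open}(i) says $B_{M_n}$ is not equi-integrable. Part (ii) of the same proposition gives that each $M_n$ is complemented and isomorphic to $\ell_2$. Third, part (iii) gives $g(M_n,M_0)\leq 1/n\to0$.

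There is no real obstacle, since everything is contained in Proposition~\ref{prop: sub_ex_open}. The one point to check is that complementation in (ii) holds for every $\lambda$, including the case $p=2$, where it is trivial. For $p>2$ it follows because every subspace of $L_p$ isomorphic to $\ell_2$ is complemented.
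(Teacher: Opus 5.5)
Your proposal is correct and is exactly the paper's argument: the paper proves the corollary by taking $M_n=M_{1/n}$ and $M_0=[r_k]$ from Proposition~\ref{prop: sub_ex_open} and reading off parts (i)--(iii). Your extra remarks only unpack what that proposition already provides: transporting the Rademacher example to $L_p[1/2,1]$, and complementation for $p>2$ via the fact that subspaces of $L_p$ isomorphic to $\ell_2$ are complemented.
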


\begin{proof}
    It suffices to consider the subspaces $M_{1/n}$, defined in Proposition~\ref{prop: sub_ex_open}.
\end{proof}

To pass from nearby complemented subspaces to nearby projections, we use two standard gap estimates. Recall that a closed subspace $M$ of a Banach space $X$ is complemented if and only if $X=M\oplus N$ for some closed subspace $N$ of $X$.

\begin{lemma}{\cite[Lemma~IV. 4.28]{Kato}}{\label{lemma: proj_gap}}
    Let  $M, N$ be closed subspaces of a Banach space $X$ such that $X = M \oplus N$. Then there is $\epsilon > 0$ such that $X = M' \oplus N$ for every closed subspace $M'$ with $g(M',M) < \epsilon$. 
\end{lemma}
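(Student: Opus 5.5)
Let $P$ denote the projection of $X$ onto $M$ along $N$. The plan is to show that the subspace $M'$ produced by Kato's gap estimate is again complementary to $N$. The strategy is a perturbation argument for the map $\Phi: M\times N\to X$, $\Phi(m,n)=m+n$, which is an isomorphism by assumption. Put $\gamma=\|P\|^{-1}$. Then for every $x\in S_M$ we have $d(x,N)\geq \gamma$: indeed $\|x\|=\|P(x-n)\|\leq \|P\|\,\|x-n\|$ for all $n\in N$. We will take $\epsilon$ small in terms of $\gamma$, for instance $\epsilon<\gamma/(2+\gamma)$ or similar.

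\emph{Step 1: $M'\cap N=\{0\}$ and $M'+N$ is closed.} For $x'\in S_{M'}$ the hypothesis $\delta(M',M)<\epsilon$ gives $m\in M$ with $\|x'-m\|<\epsilon$, so $\|m\|>1-\epsilon$. Then
\[
d(x',N)\geq d(m,N)-\epsilon\geq \gamma(1-\epsilon)-\epsilon>0 .
\]
A uniform positive lower bound for $d(x',N)$ on $S_{M'}$ means the sum $M'+N$ is direct and the associated projection is bounded. Hence $M'\oplus N$ is closed.

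\emph{Step 2: $M'+N=X$.} This is the main obstacle, because a one-sided estimate cannot give surjectivity. Here we use $\delta(M,M')<\epsilon$. Let $Z=M'\oplus N$, which is closed by Step 1, and take $x\in X$. Write $x=m+n$ with $m\in M$, $n\in N$ and $\|m\|\leq\|P\|\,\|x\|$. Choose $m'\in M'$ with $\|m-m'\|\leq \epsilon\|m\|$. Then
\[
d(x,Z)\leq \|x-(m'+n)\|\leq \epsilon\|P\|\,\|x\| .
\]
Thus $d(x,Z)\leq \epsilon\|P\|\,\|x\|$ for all $x$. If $Z\neq X$, Riesz's lemma gives some $x\in S_X$ with $d(x,Z)>\epsilon\|P\|$ as soon as $\epsilon\|P\|<1$, which is a contradiction. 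Hence $Z=X$.

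Combining the two steps, any $\epsilon$ satisfying both $\gamma(1-\epsilon)>\epsilon$ and $\epsilon\|P\|<1$ works. For example $\epsilon<\min\{\gamma/(1+\gamma),1/\|P\|\}$ suffices. The only delicate point is the surjectivity in Step 2, which is handled by the Riesz lemma argument using the other half of the gap.
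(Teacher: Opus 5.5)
Your proof is correct. The paper does not prove this lemma; it only cites Kato. There, statements of this kind come out of the general perturbation theory of pairs of closed subspaces (nullity, deficiency and the minimum gap $\gamma(M,N)$), and the surjectivity (deficiency) half is typically obtained by duality, i.e.\ by passing to annihilators and using $\delta(M,M')=\delta((M')^{\perp},M^{\perp})$.

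Your route is direct and elementary, and differs from Kato's in two places. In Step~1, your constant $\|P\|^{-1}=\inf_{x\in S_M}d(x,N)$ is exactly Kato's minimum gap for a complementary pair, so your threshold $\gamma/(1+\gamma)$ is the natural one. In Step~2, you replace the duality argument by the observation that $d(x,M'\oplus N)\leq \delta(M,M')\,\|P\|\,\|x\|$ for all $x\in X$; combined with Riesz's lemma and the closedness from Step~1, this forces $M'\oplus N=X$.

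What your approach buys is a self-contained proof with an explicit threshold $\epsilon=1/(1+\|P\|)$. It also shows that the two one-sided conditions $\delta(M',M)<1/(1+\|P\|)$ and $\delta(M,M')<1/\|P\|$ suffice separately. What Kato's framework buys is that the same estimates handle non-complementary (semi-Fredholm) pairs as well.

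The only omission is the degenerate case $M=\{0\}$, where $\gamma=\|P\|^{-1}$ is undefined. There $g(M',M)<1$ already forces $M'=\{0\}$, so the statement is trivial.
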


\begin{lemma}\label{lemma: conv_rang_kern}
    Let $X=M\oplus N$, and let $(M_n)$ be closed subspaces such that $X=M_n\oplus N$ for every $n \geq 1$. Denote by $P$ and $P_n$ the projections
    onto $M$ and $M_n$, respectively, with kernel $N$. If
    $\delta(M_n,M)\to0$, then $\Vert P_n-P\Vert\to0$.
\end{lemma}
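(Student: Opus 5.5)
We need to prove Lemma \ref{lemma: conv_rang_kern}: if $X=M\oplus N=M_n\oplus N$, $P,P_n$ are the projections onto $M,M_n$ along $N$, and $\delta(M_n,M)\to0$, then $\Vert P_n-P\Vert\to0$.

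The plan is to express $P_n-P$ through $I-P$ restricted to $M_n$. Since $P$ and $P_n$ have the same kernel $N$, we have $P P_n = P_n$? No; the correct relations come from comparing ranges and kernels. From $N(P_n)=N(P)=N$ we get $P_n(I-P)=0$ and $P(I-P_n)=0$. Hence $P_n=P_nP$ and $P=PP_n$. Therefore
\begin{equation*}
P_n-P = P_n - PP_n = (I-P)P_n.
\end{equation*}
So it suffices to bound $\Vert (I-P)P_n\Vert\le \Vert (I-P)|_{M_n}\Vert\,\Vert P_n\Vert$.

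The first factor is controlled by the one-sided gap. For $x\in S_{M_n}$ and any $y\in M$ we have $(I-P)x=(I-P)(x-y)$, so $\Vert(I-P)x\Vert\le \Vert I-P\Vert\, d(x,M)$. It follows that $\Vert (I-P)|_{M_n}\Vert\le \Vert I-P\Vert\,\delta(M_n,M)\to0$.

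The main obstacle is showing that $\sup_n\Vert P_n\Vert<\infty$. I would obtain this from the same identity in a self-improving way. Write $P_n = P + (I-P)P_n$, so that
\begin{equation*}
\Vert P_n\Vert\le \Vert P\Vert + \Vert I-P\Vert\,\delta(M_n,M)\,\Vert P_n\Vert.
\end{equation*}
Each $\Vert P_n\Vert$ is finite, because $M_n$ and $N$ are closed and complementary, so the closed graph theorem applies. Once $\Vert I-P\Vert\,\delta(M_n,M)\le 1/2$, which happens for all large $n$, we get $\Vert P_n\Vert\le 2\Vert P\Vert$. Combining the two bounds gives
\begin{equation*}
\Vert P_n-P\Vert\le 2\Vert P\Vert\,\Vert I-P\Vert\,\delta(M_n,M)\to0.
\end{equation*}
The only delicate point is the algebraic identity $P_n-P=(I-P)P_n$, which rests solely on the two projections sharing the kernel $N$. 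After that, the absorption argument above removes the need for any a priori uniform bound on $\Vert P_n\Vert$.
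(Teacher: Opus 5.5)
Your proof is correct and follows essentially the same route as the paper. Both use the identity $P_n-P=(I-P)P_n$ (from $P=PP_n$), the estimate $\Vert (I-P)|_{M_n}\Vert\le\Vert I-P\Vert\,\delta(M_n,M)$, and a uniform bound on $\Vert P_n\Vert$. The one small difference is that the paper does your absorption step pointwise, via $\Vert P_nx\Vert\le \Vert Px\Vert+\Vert I-P\Vert\,\delta(M_n,M)\Vert P_nx\Vert$, so it gets boundedness of $P_n$ directly without appealing to the closed graph theorem.
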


\begin{proof}
    For $u\in M_n$,
    \begin{equation}
        \Vert(I-P)u\Vert
        \leq \Vert I-P\Vert\,d(u,M)
        \leq \Vert I-P\Vert\,\delta(M_n,M)\Vert u\Vert.
    \end{equation}
    Consequently,
    \begin{equation}
        \Vert Pu\Vert
        \geq \Vert u\Vert-\Vert(I-P)u\Vert
        \geq \bigl(1-\Vert I-P\Vert\,\delta(M_n,M)\bigr)\Vert u\Vert.
    \end{equation}
    Note that, for sufficiently large $n$, $1-\Vert I-P\Vert\,\delta(M_n,M) > 0$.
    Since $P$ and $P_n$ have the same kernel, $P=PP_n$. Thus, for $x\in X$,
    \begin{equation}
        \begin{aligned}
            \Vert P_nx\Vert
            &\leq
            \frac{\Vert PP_nx\Vert}
                 {1-\Vert I-P\Vert\,\delta(M_n,M)} =\frac{\Vert Px\Vert}
                    {1-\Vert I-P\Vert\,\delta(M_n,M)} \leq
            \frac{\Vert P\Vert}
                 {1-\Vert I-P\Vert\,\delta(M_n,M)}\Vert x\Vert.
        \end{aligned}
    \end{equation}
    Finally,
    \begin{equation}
        \begin{aligned}
            \Vert(P_n-P)x\Vert =\Vert(I-P)P_nx\Vert \leq \Vert I-P\Vert\,\delta(M_n,M)\Vert P_nx\Vert \leq \frac{\Vert I-P\Vert\,\Vert P\Vert\,\delta(M_n,M)}
                 {1-\Vert I-P\Vert\,\delta(M_n,M)}\Vert x\Vert.
        \end{aligned}
    \end{equation}
    Hence $\Vert P_n-P\Vert\to0$.
\end{proof}


\begin{corollary}{\label{cor: seq_not_open}}
    For $2\leq p < \infty$, $LW_+(L_p)$ is not an open subset of $\mathcal{B}(L_p)$. 
\end{corollary}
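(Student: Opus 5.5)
We want a projection that is $L$-tauberian and arbitrarily close to projections that are not. Take the complemented subspaces from Proposition~\ref{prop: sub_ex_open} and Corollary~\ref{cor: seq_sub_not_open}: $M_0=[r_n]$ has equi-integrable unit ball, $M_\lambda$ does not for $\lambda>0$, and $g(M_\lambda,M_0)\leq\lambda$. The operators we use are projections whose \emph{kernel} is the relevant subspace, because for closed-range operators Proposition~\ref{prop: L_tau_closed_range}(i) says that $L$-tauberian means exactly that the kernel has equi-integrable unit ball.

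First we fix the base projection. Since $M_0$ is complemented, write $L_p=M_0\oplus N$ with $N$ closed. By Lemma~\ref{lemma: proj_gap} there is $\epsilon>0$ such that $L_p=M'\oplus N$ whenever $g(M',M_0)<\epsilon$. For $n$ large, $g(M_{1/n},M_0)\leq 1/n<\epsilon$, so $L_p=M_{1/n}\oplus N$. Let $P$ and $P_n$ be the projections onto $M_0$ and $M_{1/n}$ along $N$. Since $\delta(M_{1/n},M_0)\to 0$, Lemma~\ref{lemma: conv_rang_kern} gives $\Vert P_n-P\Vert\to0$. Hence
\[
\Vert (I-P_n)-(I-P)\Vert\to0 .
\]

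Next we identify which of these operators are $L$-tauberian. The operators $I-P$ and $I-P_n$ are projections, so they have closed range, and their kernels are $N(I-P)=M_0$ and $N(I-P_n)=M_{1/n}$.
\begin{itemize}
\item By Proposition~\ref{prop: L_tau_closed_range}(i), $I-P$ is $L$-tauberian, because $B_{M_0}$ is equi-integrable (Example~\ref{ex: Rademacher}).
\item No $I-P_n$ is $L$-tauberian, because $B_{M_{1/n}}$ is not equi-integrable by Proposition~\ref{prop: sub_ex_open}(i).
\end{itemize}
Thus $I-P\in LW_+(L_p)$ is a norm limit of operators outside $LW_+(L_p)$, so $LW_+(L_p)$ is not open.

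The only delicate point is making sure the range and kernel roles are the right way round. It would be tempting to use $P$ itself, but its kernel $N$ is not controlled, so $P$ need not be $L$-tauberian; passing to $I-P$ puts the controlled subspace into the kernel. We also need a single complement $N$ that works for all the $M_{1/n}$ simultaneously, and this is exactly what Lemma~\ref{lemma: proj_gap} provides.
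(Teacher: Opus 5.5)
Your proposal is correct and is essentially the paper's proof. You fix a common complement $N$ via Lemma~\ref{lemma: proj_gap} and obtain $\Vert P_n-P\Vert\to0$ from Lemma~\ref{lemma: conv_rang_kern}. You then apply the closed-range criterion of Proposition~\ref{prop: L_tau_closed_range}(i) to $I-P_n$ and $I-P$, whose kernels are $M_{1/n}$ and $M_0$.
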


\begin{proof}
    Let $2 \leq p < \infty$ arbitrary. By Corollary~\ref{cor: seq_sub_not_open} there is a sequence of complemented closed subspaces $(M_n)$ of $L_p$ with $B_{M_n}$ not equi-integrable for every $n \in \N$, $B_{M_0}$ equi-integrable  and $g(M_n, M_0) \to 0$. Therefore, by Lemma~\ref{lemma: proj_gap} we may assume that for every $n \geq 1$ the complement of  $M_n$ is $N_0$, the complement of $M_0$.
    
    Let $P_n, P: L_p \to L_p$ be the corresponding projections onto $M_n$ and $M_0$ with kernel $N_0$, and define the operators $T_n = I - P_n$ and $T = I - P$ for every $n \in \N$. Clearly  $N(T_n) = M_n$ and $N(T) = M_0$. Since $T_n$, $T$ all have closed range, $B_{M_{n}}$ fails to be equi-integrable and $B_{M_0}$ is equi-integrable, we know that $T_n$ is a sequence of non-$L$-tauberian operators and $T$ is $L$-tauberian. On the other hand, $g(M_n, M_0) \to 0$ and $(M_n), M$ have all the same complement. Thus we conclude by Lemma~\ref{lemma: conv_rang_kern} that $\Vert P_n - P \Vert \to 0$, and hence $\Vert T_n - T \Vert \to 0$, as we wanted.
\end{proof}


Corollary \ref{cor: seq_not_open} already shows that $LW_+(L_p)$ differs substantially from $\DNS(L_p)$ when $p\geq2$. The next construction gives a stronger distinction: even composing with an isomorphism can destroy $L$-tauberianity.

\begin{proposition}{\label{prop: L-taub_L_2}}
    For every $2 \leq p < \infty$ there exist an onto isomorphism $S\in \Lc(L_p)$ and an operator $T\in LW_+(L_p)$ such that $TS \notin LW_+(L_p)$. In particular, $LW_+(L_p)$ is not stable under composition for  $2 \leq p < \infty$.
\end{proposition}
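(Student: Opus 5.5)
Our plan is to work with projections whose kernels come from Proposition~\ref{prop: sub_ex_open}, and to produce the isomorphism $S$ as a map carrying a subspace with equi-integrable unit ball onto one whose unit ball is not equi-integrable. Fix $\lambda=1$ and the subspaces $M_0=[r_n]$ and $M_1=[r_n+x_n]$ of Proposition~\ref{prop: sub_ex_open}. There $(x_n)\in dn(L_p)$ is supported in $[0,1/2]$ and the Rademacher copy $(r_n)$ is supported in $[1/2,1]$. By Proposition~\ref{prop: sub_ex_open}(ii), $M_0$ is complemented; write $L_p=M_0\oplus N_0$ and let $P$ be the projection onto $M_0$ along $N_0$. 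Set $T=I-P$. Then $N(T)=M_0$, $R(T)=N_0$ is closed, and $B_{M_0}$ is equi-integrable by Example~\ref{ex: Rademacher}. Proposition~\ref{prop: L_tau_closed_range}(i) therefore gives $T\in LW_+(L_p)$.

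Next we construct an onto isomorphism $S$ with $S(M_1)=M_0$. Then $N(TS)=S^{-1}(N(T))=S^{-1}(M_0)=M_1$, and $TS$ has closed range since $S$ is onto. Because $B_{M_1}$ is not equi-integrable by Proposition~\ref{prop: sub_ex_open}(i), Proposition~\ref{prop: L_tau_closed_range}(i) yields $TS\notin LW_+(L_p)$.

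The construction of $S$ is the main point. By Proposition~\ref{prop: sub_ex_open}(ii), both $M_0$ and $M_1$ are complemented copies of $\ell_2$, via the equivalent bases $(r_n)$ and $(z_n)=(r_n+x_n)$; both are equivalent to the unit vector basis of $\ell_2$ by Khintchine and the estimate \eqref{ec: ineq_con_rad}. So it suffices to choose complements $L_p=M_1\oplus N_1=M_0\oplus N_0$ with $N_1\simeq N_0$. Then $S$ can be defined as $z_n\mapsto r_n$ on $M_1$ and as an isomorphism $N_1\to N_0$ on $N_1$. The hardest step is arranging $N_0\simeq N_1$.

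The first attempt is to use the gap. Lemma~\ref{lemma: proj_gap} allows one common complement $N_0$ for all $M_\lambda$ with $\lambda$ small, so we replace $\lambda=1$ by a small $\lambda>0$ and take $N_1=N_0$. Proposition~\ref{prop: sub_ex_open}(i) still gives that $B_{M_\lambda}$ is not equi-integrable for every $\lambda>0$. Define $S=I$ on $N_0$ and $S(\sum a_n z_n)=\sum a_n r_n$ on $M_\lambda$; in other words, $S=P_\lambda'+U P_\lambda$, where $P_\lambda$ is the projection onto $M_\lambda$ along $N_0$, $P_\lambda'=I-P_\lambda$, and $U:M_\lambda\to M_0$ is the basis isomorphism. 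The map $S$ is bijective with inverse $P_0' + U^{-1}P_0$, where $P_0=P$ and $P_0'=I-P$. A more direct alternative is $S=I-\lambda V$, where $V f=\sum_n \langle f,h_n\rangle x_n$ for a biorthogonal family $h_n$ to $(z_n)$ obtained from $P_\lambda$ and Khintchine. Since $\|V\|$ is bounded independently of small $\lambda$ and $I-\lambda V$ is invertible for small $\lambda$, this avoids any appeal to Pełczyński decomposition. Either way $S$ is an onto isomorphism and the argument above concludes. The final assertion follows by taking $T$ and $S$ as constructed, since $S$ is invertible and hence (being upper semi-Fredholm) $L$-tauberian by Proposition~\ref{prop: non_trivial_L_taub}(ii).
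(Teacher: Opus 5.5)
Your proof is correct. Its skeleton matches the paper's: $T=I-P$ with $N(T)=[r_n]$, and an onto isomorphism $S$ sending $r_n+\lambda x_n$ to $r_n$. The difference is in how $S$ is built and how the failure of $L$-tauberianity is detected.

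The paper chooses the projection $P$ onto $[r_n]$ so that $Px_n=0$, using $L_p=L_p(0,1/2)\oplus_p L_p(1/2,1)$. It takes $A:[r_n]\to[x_n]$ with $Ar_n=x_n$ and sets $S=I-AP$. Since $PA=0$, one has $(AP)^2=0$, so $S^{-1}=I+AP$ comes for free, already with $\lambda=1$. It then argues directly from the definition: $TS$ annihilates the non-equi-integrable set $\{r_n+x_n\}$.

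You instead take an arbitrary complement $N_0$. You use Lemma~\ref{lemma: proj_gap} together with Proposition~\ref{prop: sub_ex_open}(iii) to make $N_0$ a common complement of $M_\lambda$ for small $\lambda>0$. You then define $S=P_\lambda'+UP_\lambda$ and conclude via $N(TS)=M_\lambda$ and Proposition~\ref{prop: L_tau_closed_range}(i). This is valid, but the perturbation step can be avoided.

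With the paper's choice $Px_n=0$, the restriction $P|_{M_\lambda}$ is an isomorphism onto $M_0$. Hence $N(P)$ is already a common complement for every $\lambda$. For $\lambda=1$ your formula then reduces exactly to the paper's operator: $P_1=P+AP$ and $UP_1=P$, so $P_1'+UP_1=I-AP$.

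Your second alternative $S=I-\lambda V$ is only sketched, since the uniform bound on $\Vert V\Vert$ is asserted rather than checked. It is not needed.

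Your observation that $S$ is invertible, hence upper semi-Fredholm and so in $LW_+(L_p)$ by Proposition~\ref{prop: non_trivial_L_taub}(ii), is correct. It makes explicit the \emph{in particular} part, which the paper leaves implicit.
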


\begin{proof}
    Consider a  sequence $(x_n) \in dn(L_p)$ with  $\supp x_n \on [0,1/2]$, and take the Rademacher system $(r_n)$ so that $\supp r_n \on [1/2,1]$. Since $[x_n]$ is isometric to $\ell_p$ and $[r_n]$ is isomorphic to $\ell_2$, we can find a bounded operator $A: [r_n] \to [x_n]$ such that $Ar_n = x_n$. Moreover $[r_n]$ is complemented in $L_p$ (see the proof of \cite[Proposition~6.4.2]{KaltonAlbiac}), so there is a projection $P : L_p \to L_p$ onto $[r_n]$. Indeed, since $L_p = L_p(0,1/2) \oplus_p L_p(1/2,1)$, $\supp r_n \on [1/2,1]$ and $\supp x_n \on [0,1/2]$, we may take the projection such that $Px_n = 0$ for every $n \geq 1$, and thus $PA = 0$. Consider $T=I-P$ and $S = I-AP$. Note that
    \begin{equation}
        (I+AP)S = I-AP +AP -APAP = I.
    \end{equation}
    Similarly $S(I+AP) = I$, so $S$ is an isomorphism. Moreover $N(T) = N(I-P) = [r_n]$ and $T$ has closed range. Since $B_{[r_n]}$ is equi-integrable (see Example~\ref{ex: Rademacher}), we know by Proposition~\ref{prop: L_tau_closed_range} that $T$ is $L$-tauberian. Finally, consider the subset $C = \{r_n +x_n\}_n \on L_p$. Clearly $C$ is bounded, and since $\Vert (x_n+r_n)\chi_{\supp x_n} \Vert_p = \Vert x_n \Vert_p = 1$, it follows that $C$ is not equi-integrable. Moreover,
    \begin{equation}
        S(x_n+r_n) = x_n+r_n-AP(x_n+r_n) = x_n+r_n-Ar_n = r_n,
    \end{equation}
    and thus $TS(x_n+r_n)=(I-P)r_n=0$. Hence $TS(C) = \{0\}$, which is compact, even though $C$ is not equi-integrable. Therefore $TS$ is not $L$-tauberian.
\end{proof}

\begin{corollary}{\label{cor: equi_equi}}
    Let $2 \leq p < \infty$. Then there is a surjective isomorphism $S\in \Lc(L_p)$ and a bounded subset $C \on L_p$ such that $S(C)$ is equi-integrable and $C$ is not.
\end{corollary}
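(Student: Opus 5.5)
The plan is to reuse the objects built in the proof of Proposition~\ref{prop: L-taub_L_2}. We fix $(x_n)\in dn(L_p)$ with $\supp x_n\subset[0,1/2]$ and a Rademacher system $(r_n)$ supported in $[1/2,1]$. We also fix the operator $A:[r_n]\to[x_n]$ with $Ar_n=x_n$ and the projection $P$ onto $[r_n]$ with $Px_n=0$, so that $PA=0$. Finally let $S=I-AP$, which that proof shows to be an onto isomorphism with inverse $I+AP$.

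For the bounded set I take $C=\{x_n+r_n:n\in\N\}$. We already know $C$ is not equi-integrable, since $\Vert (x_n+r_n)\chi_{\supp x_n}\Vert_p=\Vert x_n\Vert_p=1$ and $\mu(\supp x_n)\to0$; by Proposition~\ref{cor: equi_kothe} this rules out equi-integrability. On the other hand, the computation in the proof of Proposition~\ref{prop: L-taub_L_2} gives $S(x_n+r_n)=r_n$. Hence $S(C)=\{r_n:n\in\N\}\subset B_{[r_n]}$.

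By Example~\ref{ex: Rademacher}, $B_{[r_n]}$ is equi-integrable in $L_p$. Since subsets of equi-integrable sets are equi-integrable (Proposition~\ref{prop: prop_equi}(iv)), $S(C)$ is equi-integrable, which completes the argument.

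There is essentially no obstacle here. The only point needing care is that $S$ is surjective as well as an isomorphism, and this is the identity $S(I+AP)=(I+AP)S=I$, which uses $PA=0$. An alternative route would be abstract: Remark~(ii) after Proposition~\ref{prop:alg-char-L-taub}, applied to $T=I-P$ and combined with $TS\notin LW_+$, would also produce such a set. The direct construction above is simpler.
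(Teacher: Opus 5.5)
Your proposal is correct and is essentially the paper's proof: it takes the isomorphism $S=I-AP$ from Proposition~\ref{prop: L-taub_L_2} and the set $C=\{r_n+x_n\}_n$, so that $S(C)=\{r_n\}$ lies in the equi-integrable ball $B_{[r_n]}$ while $C$ is not equi-integrable. Your extra details (surjectivity via $PA=0$, and the alternative route through the projection remark) are consistent with the paper.
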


\begin{proof}
    It suffices to take as $S$ the isomorphism in Proposition~\ref{prop: L-taub_L_2}, and the subset $C = \{r_n +x_n\}_n \on L_p$ with $(r_n)$ and $(x_n)$ as in its proof.
\end{proof}




For $2<p<\infty$, a dispersed subspace $M$ of $L_p$ can have non-equi-integrable unit ball. However, a result of Haydon, Odell, and Schlumprecht \cite{Haydon11}  gives the following fact: 

\begin{theorem}{\label{thm: disp_cont_equiball}}
    Let $2 < p < \infty$. Then every infinite dimensional dispersed subspace of $L_p$ contains an infinite dimensional subspace with equi-integrable unit ball.
\end{theorem}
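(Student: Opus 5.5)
Let $M$ be an infinite dimensional dispersed subspace of $L_p$, $2<p<\infty$. By Proposition~\ref{prop: dispersed_strongly_embedded}, $M$ is strongly embedded, so on $M$ the $L_p$-norm and convergence in measure agree. By the Kadec--Pe\l{}czy\'nski dichotomy, a strongly embedded subspace of $L_p$ with $p>2$ satisfies $\Vert f\Vert_p\le C\Vert f\Vert_2$ for $f\in M$. In particular $M$ is isomorphic to a Hilbert space, and the $L_p$ and $L_2$ norms are equivalent on $M$. Thus $M\simeq\ell_2$, and we may pick a normalized weakly null basic sequence $(f_n)$ in $M$, equivalent to the unit vector basis of $\ell_2$.

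The equi-integrability criterion I would aim for is a higher integrability bound: if $N\subset M$ satisfies $\Vert f\Vert_q\le K\Vert f\Vert_p$ on $N$ for some $q>p$, then $B_N$ is equi-integrable. This follows from H\"older exactly as in Example~\ref{ex: Rademacher}, via Proposition~\ref{cor: equi_kothe}(v). So it suffices to find $q>p$ and an infinite dimensional $N\subset M$ on which the $L_q$ and $L_p$ norms are equivalent. Here I would invoke Haydon--Odell--Schlumprecht \cite{Haydon11}. Their result says, roughly, that a normalized weakly null sequence in $L_p$, $2<p<\infty$, whose span embeds isomorphically in $L_2$ (equivalently, has an upper $2$-estimate, as our $(f_n)$ does) has a subsequence that is equivalent to, or can be perturbed into, a sequence of a Rosenthal $X_p$-type: a sum of a $\ell_2$-part behaving like independent bounded variables and a small $\ell_p$-part. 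In the $\ell_2$-subsequence case the relevant structure is that of the Rademacher or Gaussian type, where Khintchine-type inequalities give $L_q$ control for all $q<\infty$.

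Concretely, after passing to a subsequence and a small perturbation using the principle of small perturbations, I would write $f_n=g_n+h_n$ with $(h_n)$ almost disjoint and $(g_n)$ satisfying $\Vert\sum a_ng_n\Vert_q\lesssim(\sum a_n^2)^{1/2}$ for some $q>p$. Dispersedness of $M$ forces $\Vert h_n\Vert_p\to0$. Otherwise, a block argument like the one in Proposition~\ref{prop: count_disp_nonequi} would produce an almost disjoint normalized sequence in $M$. Indeed, averaging $N_k$ consecutive terms kills the $\ell_2$-part in comparison with the $\ell_p$-part only in the opposite direction, so I would instead use that $h_n\xrightarrow{\mu}0$ together with strong embeddedness. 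Hence a further subsequence has $\sum\Vert h_n\Vert_p$ small, $N=[f_n]$ is a small perturbation of $[g_n]$, and the $q$-estimate transfers up to an $L_p$-small error. That error must still be handled through $\mathcal{NI}$ and Remark~\ref{rem: sum_equi}, not in $L_q$. This gives $\mathcal{NI}(B_N)\le\mathcal{NI}(B_{[g_n]})+\varepsilon=\varepsilon$, and since $\varepsilon$ is arbitrary along nested choices, a diagonal argument yields $\mathcal{NI}(B_N)=0$.

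The main obstacle is extracting precisely the right statement from \cite{Haydon11}: a decomposition of a subsequence into a part with uniform higher integrability and a part that is asymptotically disjoint, and then ruling out the disjoint part using dispersedness. The case $p=2$ fails because there is no room for $q>p$ from an $\ell_2$ upper estimate. I expect the hardest check is that the "good" part genuinely has an $L_q$ bound for some $q>p$ rather than merely an $L_p$ bound.
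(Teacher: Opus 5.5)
Your opening reduction is correct and is exactly the fact the paper uses. By Kadec--Pe\l{}czy\'nski (\cite[Theorem~6.4.8]{KaltonAlbiac}), an infinite dimensional dispersed subspace $M$ of $L_p$, $p>2$, is isomorphic to $\ell_2$. The paper then concludes directly from \cite[Lemma~5.4]{Haydon11}.

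The gap is in your attempt to unpack that lemma, namely the claim that dispersedness forces $\Vert h_n\Vert_p\to0$. This is false, and the paper's own Proposition~\ref{prop: count_disp_nonequi} is a counterexample. There $M=[x_n+r_n]$, with $(x_n)$ normalized disjoint and $(r_n)$ Rademacher functions on complementary halves of $[0,1]$. This $M$ is dispersed, yet $f_n=r_n+x_n$ has disjoint part $h_n=x_n$ with $\Vert h_n\Vert_p=1$.

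Your justification cannot work either. Strong embeddedness only controls sequences lying in $M$, but $h_n\notin M$, and $f_n$ does not tend to $0$ in measure.

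Worse, in this example no subspace of the form $[f_{n_k}]$ has equi-integrable unit ball. Indeed $\Vert f_{n_k}\chi_{\supp x_{n_k}}\Vert_p=1$ while $\mu(\supp x_{n_k})\to0$ (Proposition~\ref{cor: equi_kothe}). So taking $N$ to be the span of a subsequence is doomed in general.

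Your intermediate target is also too strong. Choose each $x_n$ in $L_p\setminus\bigcup_{q>p}L_q$. For $f=\sum_k a_k(x_k+r_k)\in M$ we have $f\chi_{\supp x_n}=a_nx_n$, so $M\cap L_q=\{0\}$ for every $q>p$. Hence no nontrivial subspace of $M$ satisfies $\Vert f\Vert_q\le K\Vert f\Vert_p$.

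The missing idea is that $M\simeq\ell_2$ only makes the disjoint part \emph{dominated} in norm, not small termwise. To make it uniformly small you must spread the coefficients by passing to a block subspace, and this is where $p>2$ enters. You noticed that $\ell_2$-normalized averaging shrinks the $\ell_p$-part relative to the $\ell_2$-part. This is not an obstruction to be circumvented; it is exactly the mechanism that produces $N$.

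Concretely, let $u_k=N_k^{-1/2}\sum_{n\in I_k}f_n$, where the $I_k$ are consecutive intervals with $|I_k|=N_k$ and $N_1\le N_2\le\cdots$, and suppose the disjoint pieces satisfy $\Vert h_n\Vert_p\le1$. Then for $\sum_k b_k^2\le1$ the disjoint part of $\sum_k b_ku_k$ has norm at most $\bigl(\sum_k|b_k|^pN_k^{1-p/2}\bigr)^{1/p}\le N_1^{1/p-1/2}$. This is small once $N_1$ is large precisely because $p>2$, and it gives nothing for $p=2$, consistent with Theorem~\ref{thm: subspace_L_2}.

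In the example above, this already yields $\mathcal{NI}(B_{[u_k]})=0$ when $N_k\to\infty$, although no subsequence span works. In general one combines such blocking with truncation levels tending to infinity, a diagonal argument, and the subadditivity of $\mathcal{NI}$ (Remark~\ref{rem: sum_equi}). The paper does not carry out this step itself; it obtains the existence of $N$ directly from \cite[Lemma~5.4]{Haydon11}.
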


\begin{proof}
    This is a consequence of \cite[Lemma~5.4]{Haydon11} and the fact that, for $p > 2$,  an infinite dimensional subspace of $L_p$ is dispersed if and only if it is isomorphic to $\ell_2$ (cf. \cite[Theorem~6.4.8]{KaltonAlbiac}).
\end{proof}

\begin{remark}
    The conclusion of Theorem~\ref{thm: disp_cont_equiball} extends to certain r.i. spaces on $[0,1]$ by adapting the arguments of \cite[Lemma~5.4]{Haydon11}. Specifically, it holds when $E$ is order continuous, has upper Boyd index $q_E<\infty$, and satisfies an upper $p$-estimate for some $p>2$.
\end{remark}

The following construction, inspired by \cite[Example~2]{Astashkin14}, shows that the conclusion of Theorem~\ref{thm: disp_cont_equiball} fails dramatically in $L_2$.

\begin{theorem}{\label{thm: subspace_L_2}}
    For every $0 < \epsilon < 1/2$ there exists an infinite dimensional dispersed subspace $M_{\epsilon} \on L_2$ such that
    \begin{equation}
        \inf \bigg\{ \liminf_{n \to \infty} d(x_n, M_{\epsilon}): (x_n) \on S_{L_2} \, \, \text{weakly null equi-integrable sequence}\bigg\} = 1- \epsilon.
    \end{equation}
    Hence, $M_{\epsilon}$ contains no infinite dimensional subspace with equi-integrable unit ball.
\end{theorem}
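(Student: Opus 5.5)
The plan is to adapt the construction of Proposition~\ref{prop: count_disp_nonequi} and Proposition~\ref{prop: sub_ex_open} to $p=2$, with the weight $\lambda$ of the disjoint part tuned to $\epsilon$. Fix $(x_n)\in dn(L_2)$ with $\supp x_n\subset[0,1/2]$ and a copy of the Rademacher system $(r_n)$ supported in $[1/2,1]$. For $\lambda>0$ put $z_n=(1+\lambda^2)^{-1/2}(r_n+\lambda x_n)$ and $M=[z_n]$. Then $(z_n)$ is orthonormal in $L_2$. Choose $\lambda$ so that
\[
\lambda^2/(1+\lambda^2)=(1-\epsilon)^2,
\]
and set $M_\epsilon=M$. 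The target is $\liminf_n d(f_n,M_\epsilon)\geq\lambda(1+\lambda^2)^{-1/2}$ for every normalized weakly null equi-integrable sequence $(f_n)$, with equality for $f_n=r_n$.

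\emph{Dispersedness.} I would rerun the proof of Proposition~\ref{prop: count_disp_nonequi} verbatim with $p=2$. For $z=\sum a_k(r_k+\lambda x_k)$, both the Rademacher part and the disjoint part have norm comparable to $\Vert(a_k)\Vert_{\ell_2}$, so the Rademacher component carries a fixed fraction of the norm. An almost disjoint normalized sequence in $M$ would then force, via \cite[Theorem~3.2]{Troitsky17}, an almost disjoint sequence in $[r_n]$. This contradicts Example~\ref{ex: Rademacher}.

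\emph{Distance estimate.} Since $(z_k)$ is orthonormal,
\[
d(f,M)^2=1-\sum_k|\langle f,z_k\rangle|^2=1-\frac{1}{1+\lambda^2}\sum_k|\langle f,r_k\rangle+\lambda\langle f,x_k\rangle|^2
\]
for every $f\in S_{L_2}$. Now let $(f_n)\subset S_{L_2}$ be weakly null and equi-integrable, and fix $m$. By Bessel's inequality in $L_2$,
\[
\sum_{k\geq m}|\langle f_n,x_k\rangle|^2\leq\bigl\Vert f_n\chi_{\bigcup_{k\geq m}\supp x_k}\bigr\Vert_2^2 .
\]
The measure of $\bigcup_{k\geq m}\supp x_k$ tends to $0$ as $m\to\infty$. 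By Proposition~\ref{cor: equi_kothe}, the right-hand side is therefore small uniformly in $n$ once $m$ is large. The finitely many terms with $k<m$ tend to $0$ as $n\to\infty$ by weak nullity. Hence $\sum_k|\langle f_n,x_k\rangle|^2\to0$. Using $\sum_k|\langle f_n,r_k\rangle|^2\leq1$ and the triangle inequality in $\ell_2$, I get $\limsup_n\sum_k|\langle f_n,z_k\rangle|^2\leq(1+\lambda^2)^{-1}$. This gives $\liminf_n d(f_n,M)\geq 1-\epsilon$.

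\emph{Equality.} For $f_n=r_n$, which are normalized, weakly null and equi-integrable by Example~\ref{ex: Rademacher}, the same formula gives exactly $d(r_n,M)^2=1-(1+\lambda^2)^{-1}$, so the infimum is $1-\epsilon$.

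\emph{Final assertion.} If $N\subset M_\epsilon$ were infinite dimensional with $B_N$ equi-integrable, I would take an orthonormal sequence in $N$. It is normalized, weakly null, and equi-integrable because it lies in $B_N$. Its distance to $M_\epsilon$ is $0<1-\epsilon$, a contradiction.

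I expect the main obstacle to be the uniform tail estimate in the distance bound. This is where equi-integrability, and not just weak nullity, is genuinely used. It must also be handled carefully so that the cross terms between the Rademacher and disjoint coefficients are controlled in $\ell_2$ and not termwise.
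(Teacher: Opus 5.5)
Your proof is correct, and it takes a more direct route than the paper. You take the subspace $[r_n+\lambda x_n]$ of Proposition~\ref{prop: sub_ex_open} at $p=2$, with $\lambda^2/(1+\lambda^2)=(1-\epsilon)^2$. Because the Rademacher part and the disjoint part live on disjoint halves, the normalized generators are orthonormal, and this simplifies three things:
\begin{itemize}
\item $d(f,M_\epsilon)^2=1-\sum_k|\langle f,z_k\rangle|^2$ is an exact formula.
\item Dispersedness is already Proposition~\ref{prop: count_disp_nonequi}. Alternatively, Khintchine gives $\Vert w\Vert_1\geq C(1+\lambda^2)^{-1/2}\Vert w\Vert_2$ directly on $M_\epsilon$.
\item Equi-integrability is used only once, through Bessel's inequality on the small tail set $\bigcup_{k\geq m}\mathrm{supp}\,x_k$, to get $\sum_k|\langle f_n,x_k\rangle|^2\to0$.
\end{itemize}

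The paper instead uses product functions $f_n(s,t)=r_n(s)g_n(t)$ on $[0,1]^2$, modeled on \cite[Example~2]{Astashkin14}. There the peak of $f_n$ on $[0,1]\times E_n$ overlaps the flat parts of all the other $f_k$, so no such splitting is available. This forces a longer argument:
\begin{itemize}
\item a Bessaga--Pe\l czy\'nski block approximation of $Px_n$;
\item the concentration estimate of Claim~3, where each block keeps a $(1-\epsilon)$-fraction of its norm on a set of measure at most $2^{-\min I}$;
\item a Cauchy--Schwarz splitting of $\langle x_n,y_n\rangle$ over $A_n$ and $A_n^c$.
\end{itemize}
The restriction $\epsilon<1/2$ serves there only to ensure $\alpha_n>1$.

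Your version gains three things. It works for every $\epsilon\in(0,1)$. It shows that the $p=2$ counterexample is already contained in Proposition~\ref{prop: sub_ex_open}. And it makes clear why $p=2$ is the borderline case. In $L_p$ the disjoint component of $\sum_n a_n(r_n+\lambda x_n)$ has norm $\lambda\Vert(a_n)\Vert_{\ell_p}$, while the total norm is comparable to $\Vert(a_n)\Vert_{\ell_2}$. For $p>2$, spreading the coefficients makes this component negligible, consistent with Theorem~\ref{thm: disp_cont_equiball}. For $p=2$ it always retains the fixed fraction $\lambda(1+\lambda^2)^{-1/2}=1-\epsilon$ of the norm.

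The paper's construction gives, in return, an example whose elements do not split into a Rademacher piece plus a disjointly supported piece. That feature is not needed for the statement.
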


\begin{proof}
    Fix an arbitrary $0 < \epsilon < 1/2$, and take $\beta > 1/3$ such that $\beta(1+\beta)^{-1} = (1-\epsilon)^2$. For simplicity, we work on the probability space $\Omega=[0,1]\times[0,1]$, endowed with product Lebesgue measure. It is well known (cf. \cite[p. 102]{KaltonAlbiac}) that there exists a measure-preserving bijection $\phi\colon [0,1]\longrightarrow\Omega$. Consequently, the operator $U\colon L_2(\Omega)\longrightarrow L_2$ defined by $Uf=f\circ~\phi$ is a surjective lattice isometry, and thus the following construction in $L_2(\Omega)$ transfers lattice isometrically to $L_2$.

    Let $(r_n)$ be the Rademacher system. For $n \geq 1$, set $E_n = (2^{-n-1}, 2^{-n}]$, $\alpha_n = (\beta 2^{n+1})^{1/2}$ and define
    \begin{equation}
        g_n(t)= \begin{cases} \alpha_n ,&  t \in E_n,\\
            1,& t \notin E_n.
        \end{cases}
    \end{equation}
    Note that $m(E_n)=2^{-n-1}$ and $\alpha_n > 1$. Finally, put $f_n(s,t) = r_n(s)g_n(t)$ and $M_{\epsilon} = [f_n]$. We verify the required properties in a sequence of claims.
    
    \medskip   
    \noindent \textbf{Claim 1.} \textit{$(f_n)$ is an orthogonal basic sequence equivalent to the unit vector basis of $\ell_2$.} 
    
    For $j \neq k$, Fubini's theorem yields
    \begin{equation}
        \langle f_j, f_k \rangle = \int_0^1 g_j(t)g_k(t) \, dt \int_0^1 r_j(s)r_k(s) \, ds = 0
    \end{equation}
    Thus $(f_n)$ is an orthogonal sequence, and
    \begin{equation}{\label{ec: norm_block_f_k}}
        \bigg\Vert \sum_{k=1}^{\infty} a_k f_k \bigg \Vert_2^2 = \sum_{k=1}^{\infty} |a_k|^2\Vert f_k \Vert_2^2 = \sum_{k=1}^{\infty} |a_k|^2 ( \alpha_k^2 |E_k|+1-|E_k|) = \sum_{k=1}^{\infty} |a_k|^2 ( 1+\beta - 2^{-k-1}).
    \end{equation}
    Since $2^{-n-1} < \beta$, it follows that
    \begin{equation}{\label{ec: equiv_ell_2_f_k}}
        \sum_{k=1}^{\infty} |a_k|^2 \leq \bigg\Vert \sum_{k=1}^{\infty} a_k f_k \bigg \Vert_2^2 \leq (1+\beta)\sum_{k=1}^{\infty} |a_k|^2.
    \end{equation}
    Therefore $(f_n)$ is equivalent to the unit vector basis of $\ell_2$.
    
    \medskip
    \noindent \textbf{Claim 2.} \emph{The subspace $M_{\epsilon}$ is dispersed.} 

    By Khintchine's inequality (\cite[Theorem~6.2.3]{KaltonAlbiac}), there is a constant $C > 0$ such that 
    \begin{equation}
        \begin{aligned}
            \left\|\sum_k a_kf_k\right\|_1 &=
            \int_0^1\int_0^1 \left|\sum_k a_kr_k(s)g_k(t)\right|\,ds\,dt \\
            &\geq C\int_0^1 \left(\sum_k|a_k|^2g_k(t)^2\right)^{1/2}dt 
            \geq C\left(\sum_k |a_k|^2\right)^{1/2},
        \end{aligned}
    \end{equation}
    where we have used that $g_n \geq 1$. Combining this with
    the first claim, it follows that
    \begin{equation}
        \Vert u\Vert_1 \geq \frac{C}{\sqrt{1+\beta}}\,\Vert u\Vert_2, \quad u \in M_{\epsilon}.
    \end{equation}
    Hence $M_{\epsilon}$ is dispersed by Proposition~\ref{prop: dispersed_strongly_embedded} and \cite[Proposition~6.4.5]{KaltonAlbiac}.
\medskip

    \noindent\textbf{Claim 3.}
    \emph{For every nonempty finite $I\subseteq\mathbb N$ and every block
    $u=\sum_{k\in I}a_kf_k$, there is a measurable set $A_I \on \Omega$ such that
        $\Vert u\chi_{A_I} \Vert_2\geq (1-\epsilon) \Vert u\Vert_2$ and 
        $|A_I|\leq2^{-\min I}$.}
    \medskip
    
    Take $I\subseteq\mathbb N$ finite, $u=\sum_{k\in I}a_kf_k$ and 
    $A_I = [0,1] \times \cup_{k \in I} E_k$. For $k \in I$ we obtain
    \begin{equation}
        \begin{aligned}
            \int_{\bigcup_{j\in I}E_j}g_k(t)^2\,dt =
        \alpha_k^2|E_k|
        +
        \left|
            \left(\bigcup_{j\in I}E_j\right)\setminus E_k
        \right| \geq\beta.
        \end{aligned}
    \end{equation}
    Therefore, by the orthogonality of the Rademacher system,
    \begin{equation}
        \begin{aligned}
            \Vert u\chi_{A_I}\Vert_2^2
            &=
            \int_{\bigcup_{j\in I}E_j}\int_0^1
            \left|
            \sum_{k\in I}a_kr_k(s)g_k(t)
            \right|^2ds\,dt = \int_{\bigcup_{j\in I}E_j} \sum_{k\in I}|a_k|^2g_k(t)^2 \geq \beta \sum_{k\in I}|a_k|^2,
        \end{aligned}
    \end{equation}
    which, together with \eqref{ec: equiv_ell_2_f_k}, yields
    \begin{equation}
        \Vert u\chi_{A_I}\Vert_2^2 \geq \dfrac{\beta}{\beta+1}\Vert u\Vert_2^2 = (1-\epsilon)^2 \Vert u\Vert_2^2.
    \end{equation}
    Furthermore, $|A_I| = \sum_{k \in I}2^{-k-1} \leq 2^{-\min I}$.
    \medskip
    
    \noindent\textbf{Claim 4.}
    \emph{For every normalized weakly null equi-integrable sequence
    $(x_n)\subseteq L_2(\Omega)$, $\liminf_{n\to\infty}d(x_n,M_\varepsilon)\geq 1-\epsilon.$}

    Suppose, arguing by contradiction, that there exists a normalized weakly null equi-integrable sequence $(x_n)\subseteq L_2(\Omega)$ such that $d = \liminf_{n\to\infty}d(x_n,M_\varepsilon) < 1- \epsilon$. By passing to a subsequence we may assume that $d(x_n, M_{\epsilon}) \to d$. Let $P$ be the orthogonal projection onto $M_\varepsilon$ and put $y_n=Px_n$. Then
    \begin{equation}\label{eq:projection-norm-epsilon}
        \Vert x_n-y_n\Vert_2=d(x_n, M_{\epsilon}),
        \qquad
        \Vert y_n\Vert_2^2=1-d(x_n, M_{\epsilon})^2,
    \end{equation}
    and $(y_n)$ is weakly null. By the Bessaga-Pe\l czy\'nski principle, 
    passing to a further subsequence we can find a block sequence $u_n = \sum_{k=p_n}^{q_n} a_k f_k$ ($p_n < q_n$) such that $\Vert y_n - u_n \Vert_2 \to 0$. Now, by the third claim there exists a family of measurable sets $(A_n) \on \Omega$ such that $\Vert u_n \chi_{A_n} \Vert_2 \geq (1-\epsilon)\Vert u_n \Vert_2$ and $|A_n| \leq 2^{-p_n}$ for every $n \in \N$. Thus
    \begin{equation}
        \Vert y_{n}\chi_{A_n}\Vert_2
        \geq
        \Vert u_n\chi_{A_n}\Vert_2
        -
        \Vert(y_{n}-u_n)\chi_{A_n}\Vert_2\\
        \geq
        (1-\epsilon)\Vert u_n\Vert_2-\Vert y_{n}-u_n\Vert_2.
    \end{equation}
    Since $A_n$ and $A_n^c$ are disjoint for every $n \geq 1$, taking limits yields
    \begin{equation}{\label{ec: L_2_equi_n}}
        \begin{aligned}
            \limsup_{n \to \infty} \Vert y_{n}\chi_{A_n^c}\Vert_2^2 & = \limsup_{n \to \infty} \Big(\Vert y_{n}\Vert_2^2 - \Vert y_{n}\chi_{A_n}\Vert_2^2 \Big) \\
            & \leq \limsup_{n \to \infty} \Big(\Vert y_{n}\Vert_2^2 - \Big((1-\epsilon)\Vert u_n\Vert_2-\Vert y_{n}-u_n\Vert_2 \Big)^2 \Big)\\
            & = \limsup_{n \to \infty} \Big(\Vert y_{n}\Vert_2^2 - (1-\epsilon)^2\Vert u_n\Vert_2^2 \Big) = \Big(1-(1-\epsilon)^2\Big)(1-d^2).
        \end{aligned}
    \end{equation}
      
    On the other hand, since $p_n \to \infty$, we have $|A_n| \to 0$. Thus, since $(x_n)$ is equi-integrable, $\Vert x_n \chi_{A_n} \Vert_2 \to 0$. On the other hand $x_n-y_n \perp y_n$, so  $\Vert y_n \Vert_2^2 = \langle x_n, y_n \rangle$ and by splitting this identity over $A_n$ and $A_n^c$ and applying Cauchy–Schwarz, we get
    \begin{equation}
        \begin{aligned}
            \Vert y_{n}\Vert_2^2  = |\langle x_n, y_n \rangle| & \leq
        \Vert x_{n}\chi_{A_n}\Vert_2 \Vert y_{n}\chi_{A_n}\Vert_2 + \Vert x_{n}\chi_{A_n^c}\Vert_2 \Vert y_{n}\chi_{A_n^c}\Vert_2 \\
        & \leq
        \Vert x_{n}\chi_{A_n}\Vert_2 \Vert y_{n}\Vert_2 + \Vert y_{n}\chi_{A_n^c}\Vert_2.
        \end{aligned}
    \end{equation}
    Letting $n \to \infty$, we get $1-d^2 \leq \liminf_{n \to \infty} \Vert y_{n} \chi_{A_n^c} \Vert_2$, and combining this with \eqref{ec: L_2_equi_n} we obtain $(1-d^2)^2 \leq (1-(1-\epsilon)^2)(1-d^2) $. Thus, dividing by $1-d^2$ yields $d \geq 1-\epsilon$, the desired contradiction.
\medskip
    
    \noindent\textbf{Claim 5.}
    \emph{There is a normalized weakly null equi-integrable sequence
    $(x_n)\subseteq L_2(\Omega)$ such that $d(x_n,M_\varepsilon) \to 1-\epsilon.$}

    Consider the sequence $x_n(s,t)=r_n(s)$. Then $(x_n)$ is orthonormal, hence normalized and weakly null, and it
    is equi-integrable because $|x_n|=1$ almost everywhere. Moreover, $\langle x_n,f_k\rangle=0$ for $k\neq n$, while
    \begin{equation}
        \langle x_n,f_n\rangle =\int_0^1g_n(t)\,dt =\alpha_n|E_n| + 1 - |E_n| = \beta^{1/2} 2^{-\frac{n+1}{2}} + 1 - 2^{-n-1}.
    \end{equation}
    Thus
    \begin{equation}
        \begin{aligned}
        d(x_n,M_\varepsilon)^2 =1-\Vert Px_n\Vert_2^2 = 1-\dfrac{|\langle x_n,f_n\rangle|^2}{\Vert f_n \Vert_2} =
        1- \frac{(\beta^{1/2} 2^{-\frac{n+1}{2}} + 1 - 2^{-n-1})^2}{1+\beta-2^{-n-1}}.
        \end{aligned}
    \end{equation}
    Therefore $d(x_n,M_\varepsilon)\to \beta^{1/2}(1+\beta)^{-1/2} = 1- \epsilon$.

    \medskip
    \noindent\textbf{Claim 6.}
    \emph{The subspace $M_\varepsilon$ contains no infinite-dimensional subspace whose unit ball is equi-integrable.}

    Suppose that $M_\varepsilon$ contains an infinite-dimensional subspace $N$ whose unit ball is equi-integrable. An orthonormal sequence $(w_n)\subseteq N$ is normalized, weakly null, and equi-integrable. However, $d(w_n,M_\varepsilon)=0$ for $n \geq 1$, contradicting the fourth claim. 
\end{proof}

\section*{Acknowledgments}

The authors thank William B. Johnson for a helpful exchange concerning the range of validity of \cite[Lemma~1]{JohnsonSchechtman08}.

\input{References}
\end{document}

%% file: Preamble.tex
\usepackage[english]{babel}
\usepackage[T1]{fontenc}
\usepackage{accents}
\usepackage{amssymb}
\usepackage{amsthm}
\usepackage{amsmath}
\usepackage{mathtools} 
\usepackage{color}
\usepackage{enumerate}
\usepackage[shortlabels]{enumitem}
\usepackage{float}
\usepackage[export]{adjustbox}
\usepackage{hyperref}
\hypersetup{
	colorlinks=true,
	linkcolor=blue, %
	filecolor=magenta, 
	urlcolor=cyan,
	citecolor= blue  %
}
\usepackage[textheight=633pt, inner=2.5cm, outer=2.5cm, voffset= -1.3cm,tmargin=124pt]{geometry}
\usepackage{fancyhdr}
\usepackage[most]{tcolorbox}
\tcbuselibrary{theorems}

\usepackage{comment} 
\usepackage{etoolbox}

\DeclareMathOperator*{\DNS}{DNS}
\DeclareMathOperator*{\DSS}{DSS}
\DeclareMathOperator*{\coDNS}{coDNS}

\DeclareMathOperator*{\sol}{sol}
\DeclareMathOperator*{\con}{con}
\DeclareMathOperator*{\acon}{acon}

\newcommand{\N}{\mathbb{N}}

\newcommand{\R}{\mathbb{R}}

\newcommand{\on}{\subset}
\newcommand{\supp}{supp \, }

\newcommand{\Lc}{\mathcal B}

\theoremstyle{plain}
\newtheorem{theorem}{Theorem}[section]
\newtheorem{corollary}[theorem]{Corollary}
\newtheorem{lemma}[theorem]{Lemma}
\newtheorem{proposition}[theorem]{Proposition}
\newtheorem{question}{Question}

\makeatletter
\renewcommand*\l@subsection{\@tocline{2}{0pt}{2.8em}{2.8em}{}}
\makeatother

\theoremstyle{definition}
\newtheorem{definition}[theorem]{Definition}
\newtheorem{example}[theorem]{Example}
\newtheorem{remark}[theorem]{Remark}

\mathtoolsset{showonlyrefs}

\title{$L$-tauberian operators and subspaces with equi-integrable unit ball}

\author[A. Caldera]{Alberto Caldera}
\address{Departamento de Matem\'aticas, Facultad de Ciencias, Universidad de Oviedo, E-33007 Oviedo, Espa\~na}
\email{calderaalberto@uniovi.es}

\author[M. Gonz\'alez]{Manuel Gonz\'alez}
\address{Departamento de Matem\'aticas, Facultad de Ciencias, Universidad de Cantabria, E-39071 Santander, Espa\~na}
\email{manuel.gonzalez@unican.es}

\thanks{{\em 2020 Mathematics Subject Classification:} Primary 47B60; Secondary 47A53, 46B42, 46E30.\\
{\em Key Words:} Banach lattice, $L$-tauberian operator, equi-integrability, dispersed subspace.\\
The first author was partially supported by MICIU (Spain), Grant FPU24/03036.
}

%% file: main.bbl
\begin{thebibliography}{99}
			
	\bibitem{KaltonAlbiac} F. Albiac, N. J. Kalton, \textit{Topics in Banach Space Theory}, Graduate Texts in Mathematics \textbf{233}, 
    Springer, New York, 2006.

    \bibitem{AAG:92} J. A. Alvarez, T. Alvarez, M.   Gonz\'alez, \emph{The gap between subspaces and perturbation of non-semi-Fredholm operators,} Bull. Austral. Math. Soc. \textbf{45} (1992), 369–376.
    
	\bibitem{Astashkin14} S. V. Astashkin, \textit{$\Lambda(p)$-spaces}, J. Funct. Anal. \textbf{266} (2014), 5174-5198.

    \bibitem{Astashkin23} S. V. Astashkin, \textit{The structure of subspaces in Orlicz spaces lying between $L_1$ and $L_2$}, Math. Z. \textbf{303} (2023), 91.

	\bibitem{Bilokopytov22} E. Bilokopytov, \textit{Disjointly non-singular operators on order continuous Banach lattices complement the unbounded norm topology}, J. Math. Anal. Appl. \textbf{506} (2022), 125556, 13 pp.
		
	\bibitem{Bilokopytov25} E. Bilokopytov, \textit{Disjointly non-singular operators and various topologies on Banach lattices}, arXiv preprint arXiv:2501.03866 (2025).

    \bibitem{Caldera26} A. Caldera, M. Gonz\'alez, A. Mart\'inez-Abej\'on, \textit{Operators whose conjugate is disjointly non-singular}, preprint, 2026.
    
	\bibitem{Troitsky17} Y. Deng, M. O'Brien, V.G. Troitsky, \textit{Unbounded norm convergence in Banach lattices}, Positivity \textbf{21} (2017), 963-974.

    \bibitem{Dor75} L. E. Dor, \textit{On projections in $L_1$}, Ann. Math. \textbf{102} (1975), 463--474.

    \bibitem{Flores08} J. Flores, F. L. Hernández, P. Tradacete, \textit{Powers of operators dominated by strictly singular operators}, Q. J. Math. \textbf{59} (2008), 321-334.
    
	\bibitem{Freeman24} D. Freeman, T. Oikhberg, B. Pineau, M. A. Taylor, \textit{Stable phase retrieval in function spaces}, Math. Ann. \textbf{390} (2024), 1–43.
		 
	\bibitem{TauberianL1} M. González, A. Martínez-Abejón, \textit{Tauberian operators on $L_1(\mu)$ spaces}, Studia Math. \textbf{125} (1997), 289-303.

    \bibitem{TauberianOperators} M. González, A. Martínez-Abejón, \textit{Tauberian Operators}, Operator Theory: Advances and Applications 194, Birkhäuser, 2010.

    \bibitem{Manuel21} M. González, A. Martínez-Abejón, A. Martinón, \textit{Disjointly non-singular operators on Banach lattices}, J. Funct. Anal. \textbf{280} (2021), 108944.


    \bibitem{ManuelQuantitative21} M. González, A. Martinón, \textit{A quantitative approach to disjointly non-singular operators}, Rev. R. Acad. Cienc. Exactas Fís. Nat. Ser. A Mat. \textbf{115} (2021), 185.

 	\bibitem{Manuel24} M. González, A. Martinón, \textit{Disjointly non-singular operators: extensions and local variations}, J. Math. Anal. Appl. \textbf{530} (2024), 127685.
        
    \bibitem{GonzalezOnieva90} M. Gonz\'alez, V. M. Onieva, \textit{Characterizations of tauberian operators and other semigroups of operators}, Proc. Amer. Math. Soc. \textbf{108} (1990), 399--405.

    \bibitem{ManuelSaksman95} M. González, E. Saksman, H.-O. Tylli, \textit{Representing non-weakly compact operators}, Stud. Math. \textbf{113} (1995), 289–303.
    	
    \bibitem{Haydon11} R. Haydon, E. Odell, Th. Schlumprecht, \textit{Small subspaces of $L_p$}, Ann. Math. \textbf{173} (2011), 169-209.


    \bibitem{JohnsonSchechtman08} W. B. Johnson, G. Schechtman, \textit{Multiplication operators on $\mathcal{L}(L_p)$ and $\ell_p$-strictly singular operators}, J. Eur. Math. Soc. \textbf{10} (2008), 1105--1119.

    \bibitem{Kadec} M. I. Kadec, A. Pełczy\'nski, \textit{Bases, lacunary sequences and complemented subspaces in the spaces $L_p$.} Studia Math. \textbf{21} (1961), 161–176.
        
    \bibitem{KaltonWilansky76} N. Kalton, A. Wilansky, \textit{Tauberian operators on Banach spaces}, Proc. Am. Math. Soc. \textbf{57} (1976), 251–255.
    
    \bibitem{Kato} T. Kato, \textit{Perturbation Theory for Linear Operators}, corrected printing of the 2nd ed., Springer, 1995.

    \bibitem{Komlos} J. Koml\'os, \textit{A generalization of a problem of Steinhaus}, Acta Math. Hungar. \textbf{18} (1967) 217–229.
    
	\bibitem{Lesnik} K. Le\'snik, L. Maligranda, J. Tomaszewski, \textit{Weakly compact sets and weakly compact pointwise multipliers in Banach function lattices}, Math. Nachr. \textbf{295} (2022), 574-592.
		
                
	\bibitem{LindenstraussTzafiri} J. Lindenstrauss, L. Tzafriri, \textit{Classical Banach Spaces II, Function Spaces}, Springer, 1979.

    \bibitem{Meyer-Nieberg:91} P. Meyer-Nieberg, \textit{Banach lattices}, Springer, 1991.
	
	\bibitem{Wnuk} W. Wnuk, \textit{Banach Lattices with Order Continuous Norms}, Polish Scientific Publishers PWN, 1999.

		
\end{thebibliography}
